\documentclass[12pt, twoside]{article}


\pagestyle{myheadings}
\def\titlerunning#1{\gdef\titrun{#1}}
\makeatletter
\def\author#1{\gdef\autrun{\def\and{\unskip, }#1}\gdef\@author{#1}}
\def\address#1{{\def\and{\\\hspace*{18pt}}\renewcommand{\thefootnote}{}%
\footnote {#1}}%
\markboth{\autrun}{\titrun}}
\makeatother
\def\email#1{e-mail: #1}
\def\subjclass#1{{\renewcommand{\thefootnote}{}%
\footnote{\emph{Mathematics Subject Classification (2010):} #1}}}

\usepackage{enumerate}
\usepackage{caption, subcaption}

\frenchspacing

\textwidth=15cm
\textheight=23cm
\parindent=16pt
\oddsidemargin=-0.5cm
\evensidemargin=-0.5cm
\topmargin=-0.5cm

\usepackage[utf8]{inputenc}
\usepackage[T1]{fontenc}
\usepackage[english]{babel}
\usepackage{enumitem}
\usepackage{amsmath,amsfonts,amssymb,graphicx,bbm,stmaryrd}

\usepackage{color}

\usepackage{amsthm}
\usepackage{constants}
\newconstantfamily{small}{
symbol=c}

\theoremstyle{plain}
\newtheorem{theorem}{Theorem}
\newtheorem{proposition}[theorem]{Proposition}
\newtheorem{lemma}[theorem]{Lemma}
\newtheorem{corollary}[theorem]{Corollary}

\theoremstyle{remark}
\newtheorem*{remark}{Remark}
\theoremstyle{definition}

\usepackage{mathtools}

\usepackage{dsfont}


\newcommand{\calE}{\mathcal{E}}
\newcommand{\calF}{\mathcal{F}}
\newcommand{\calG}{\mathcal{G}}

\newcommand{\calV}{\mathcal{V}}



\newcommand{\bbH}{\mathbb{H}}

\newcommand{\bbN}{\mathbb{N}}

\newcommand{\bbP}{\mathbb{P}}

\newcommand{\bbR}{\mathbb{R}}

\newcommand{\bbT}{\mathbb{T}}

\newcommand{\bbZ}{\mathbb{Z}}

\newcommand{\ep}{\varepsilon}

\newcommand{\T}{\mathbb T}

\newcommand{\R}{\mathbb{R}}

\newcommand{\wrel}{\textsc{w}}
\newcommand{\1}{\mathbbm{1}}

\newcommand{\wind}{\mathrm{wind}}








\usepackage{verbatim}

\usepackage{tabularx}

\mathtoolsset{showonlyrefs}
\usepackage{hyperref}



\begin{document}


\baselineskip=17pt


\titlerunning{Macroscopic loops in the loop $O(n)$ model }

\title{Macroscopic loops in the loop $O(n)$ model \\at Nienhuis' critical point}

\author{Hugo Duminil-Copin
\and
Alexander Glazman
\and
Ron Peled
\and
Yinon Spinka}

\date{\today}

\maketitle

\address{H. Duminil-Copin: Institut des Hautes \`Etudes Scientifiques, Bures-sur-Yvette, France, and
    Universit\'e de Gen\`eve, D\'epartement de math\'ematiques, Switzerland; \\ \indent\indent \email{duminil@ihes.fr}
\and
A. Glazman: Faculty of Mathematics, University of Vienna, Austria; \\ \indent\indent\email{alexander.glazman@univie.ac.at}
\and
R. Peled: School of Mathematical Sciences, Tel Aviv University, Israel; \\ \indent\indent \email{peledron@post.tau.ac.il}
\and
Y. Spinka: Department of Mathematics, University of British Columbia, Canada; \\ \indent\indent \email{yinon@math.ubc.ca}.}

\subjclass{Primary 60K35; Secondary 82B20, 82B27}


\begin{abstract}
The loop $O(n)$ model is a model for a random collection of non-intersecting loops on
the hexagonal lattice, which is believed to be in the same
universality class as the spin $O(n)$ model. It has been predicted
by Nienhuis that for $0\le n\le 2$, the loop $O(n)$ model exhibits a phase transition at a critical parameter $x_c(n)=1/\sqrt{2+\sqrt{2-n}}$. For $0<n\le 2$, the transition line has been further conjectured to separate a regime with short loops when $x<x_c(n)$ from a regime with macroscopic loops when $x\ge x_c(n)$.

In this paper, we prove that for $n\in [1,2]$ and $x=x_c(n)$, the loop $O(n)$ model exhibits macroscopic loops. Apart from the case $n=1$, this constitutes the first regime of parameters for which macroscopic loops have been rigorously established. A main tool in the proof is a new positive association (FKG) property shown to hold when $n \ge 1$ and $0<x\le\frac{1}{\sqrt{n}}$. This property implies, using techniques recently developed for the random-cluster model, the following dichotomy: either long loops are exponentially unlikely or the origin is surrounded by loops at any scale (box-crossing property). We develop a `domain gluing' technique which allows us to employ Smirnov's parafermionic observable to rule out the first alternative when $n\in[1,2]$ and $x=x_c(n)$.
\end{abstract}

\section{Introduction}\label{sec:introduction}

\subsection{Historical background}
After the introduction of the Ising model \cite{Len20} and Ising's
conjecture that it does not undergo a phase transition, physicists
tried to find natural generalizations of the model with richer
behavior. In~\cite{HelKra34}, Heller and Kramers described the
classical version of the celebrated quantum Heisenberg model, where
spins are vectors in the (two-dimensional) unit sphere in dimension
three. In 1966, Vaks and Larkin introduced the XY model \cite{VakLar66}, and a few years later, Stanley proposed a more general model, called the \emph{spin $O(n)$ model},
allowing spins to take values in higher-dimensional spheres
\cite{Sta68}. We refer the interested reader to~\cite{Sta74} for
a history of the subject.
On the hexagonal lattice, the spin $O(n)$ model can be related
to the so-called \emph{loop $O(n)$ model} introduced
in~\cite{DomMukNie81} (see also \cite{DumPelSam14} for more details on this connection and~\cite{PelSpi17} for a survey).

More formally, the loop $O(n)$ model is defined as follows. Consider the triangular lattice $\bbT$ composed of vertices with complex coordinates $r+{\rm e}^{{\rm i}\pi/3}s$ with $r,s\in\bbZ$, and its dual lattice, the hexagonal lattice $\bbH$. Since $\bbT$ and $\bbH$ are dual to each other, we call vertices of $\bbT$ {\em hexagons} to highlight the fact that they are in correspondence with faces of $\bbH$.

A \emph{loop configuration} is a spanning subgraph of $\bbH$ in which
every vertex has even degree. Note that a loop configuration can a priori consist of loops (i.e., subgraphs which are isomorphic to a cycle) together with isolated vertices and infinite paths.
For a finite set of edges $\Omega$ of the hexagonal lattice $\bbH$ and a loop configuration $\xi$, let $\calE(\Omega,\xi)$ be the set of loop configurations coinciding with $\xi$ outside $\Omega$. Let $n$ and $x$ be
positive real numbers. The loop $O(n)$ measure on $\Omega$ with edge-weight $x$ and boundary conditions $\xi$ is the probability measure
$\bbP_{\Omega,n,x}^\xi$ on $\calE(\Omega,\xi)$ defined by the formula
  \[
  \bbP_{\Omega,n,x}^\xi(\omega) := \frac{x^{|\omega|} n^{\ell(\omega)}}{Z_{\Omega,n,x}^\xi},  \]
for every $\omega\in \calE(\Omega,\xi)$,  where $|\omega|$ is the number of edges of $\omega\cap\Omega$, $\ell(\omega)$ is the number of loops of $\omega$ intersecting $\Omega$, and $Z_{\Omega,n,x}^\xi$ is the unique constant making $\bbP_{\Omega,n,x}^\xi$ a probability measure.

The physics predictions on the loop $O(n)$ model are quite mesmerizing. Nienhuis conjectured \cite{Nie82,Nie84} the following behavior: for $n\le 2$ and $x$ strictly smaller than
\begin{equation}
x_c(n):=\frac{1}{\sqrt{2+\sqrt{2-n}}},
\end{equation}
the probability that a given vertex is on a long loop decays exponentially fast in the length of the loop (subcritical regime), while for $x\ge x_c(n)$ it decays as a power-law. For $n>2$, the decay is expected~\cite{BloNie89} to be exponentially fast for all $x>0$.

In the regime of power-law decay (sometimes called the critical regime), the scaling limit of the model should be described by (see e.g.~\cite[Section 5.6]{KagNie04}) a Conformal Loop Ensemble (CLE) of parameter $\kappa$ equal to
$$\kappa=\begin{cases} \frac{4\pi}{2\pi-\arccos(-n/2)}\in [\tfrac83, 4] &\text{ if }x=x_c(n),\\
\ \ \tfrac{4\pi}{\arccos(-n/2)}\in [4, 8] &\text{ if }x>x_c(n).\end{cases}$$
The regime $x=x_c(n)$ is sometimes referred to as the dilute critical regime (the limiting curves are simple) while the regime $x>x_c(n)$ is called the dense critical regime.

While the physical understanding of the loop $O(n)$ model is very advanced, the mathematical understanding  remains mostly limited to specific values of $n$:
\begin{itemize}[noitemsep]
\item For $n=1$, $x=1$, the model is equivalent to site percolation on the triangular lattice and it is proven~\cite{Smi01,CamNew06} that it converges to CLE(6) in the scaling limit.
\item For $n=1$, $0<x<1$, the model is in correspondence with the ferromagnetic Ising model on the triangular lattice. It is proven that for~$0<x<x_c(1)=1/\sqrt{3}$ the model is in the subcritical regime~\cite{AizBarFer87}, for~$x=1/\sqrt{3}$ it converges to CLE(3) in the scaling limit~\cite{Smi10,CheSmi12,CheDumHon14,BenHon16}, and for~$1/\sqrt{3}<x<1$ the model exhibits macroscopic loops (follows from the proof in~\cite{Tas14b}). Remarkably, the question of convergence to CLE(6) for~$1/\sqrt{3}<x<1$ remains open.
\item For $n=0$, the model is called the self-avoiding walk model (one has to make sense of the fact that the configuration does not contain any loops). It is known that the critical point is equal to $x_c(0)$ \cite{DumSmi12} and that the model is in a dense phase for $x>x_c(0)$ \cite{DumKozYad11}.
\item For large values of $n$ and suitable boundary conditions, it is proved \cite{DumPelSam14} that for any $x > 0$, the probability that the loop passing through a given vertex in $\Omega$ is of length $k$ decays exponentially fast in $k$ (though a phase transition of the hard-hexagon type~\cite[Chapter 14]{Bax89} takes place).

\item Finally, it is simple to show that there is exponential decay of loop lengths for all $n>0$ when $x$ is sufficiently small (see, e.g.,~\cite[Corollary 3.2]{DumPelSam14}).
 \end{itemize}
The goal of this paper is to study the loop $O(n)$ model in a wider regime of parameters. More precisely, we study the model for $n\ge1$ and $x\le \tfrac1{\sqrt n}$.

\subsection{Main results for the loop $O(n)$ model}

As mentioned above, the mathematical understanding of the model is quite limited, and until now, the loop~$O(n)$ model was not shown to exhibit macroscopic loops for~$n\in (1,2]$ at any $x>0$. The next theorem states that this holds at Nienhuis' critical point.  A measure $\bbP$ on loop configurations on $\bbH$ is called a Gibbs measure for the loop~$O(n)$ model with edge-weight~$x$, if for $\bbP$-almost any loop configuration $\xi$ and any finite subset $\Omega$ of edges of~$\bbH$,
\[
\bbP[\cdot \mid \calE(\Omega,\xi) ]=\bbP_{\Omega,n,x}^\xi.
\]
 For $k\in\bbN$, let $\Lambda_k$ be the ball in $\bbT$ of radius $k$ around the origin for the graph distance, and let $A_k$ be the annulus in $\bbH$ made of the edges of $\bbH$ between any two vertices belonging to some hexagon in $\Lambda_{2k} \setminus \Lambda_k$.

\begin{theorem}\label{thm:loop-macroscopic}
For $n\in[1,2]$ and $x=x_c(n)$, there exists $c>0$ such that for any $k>1$ and any loop configuration $\xi$,
\[
c\le \bbP_{A_k,n,x}^\xi[\exists\text{ a loop in $A_k$ surrounding 0}]\le 1-c.
\]
In particular, the Gibbs measure is unique and its samples almost surely have infinitely many loops going around the origin.
\end{theorem}
One can view Theorem~\ref{thm:loop-macroscopic} as evidence of a scale-invariant behaviour, supporting the conformal invariance conjecture of~\cite{KagNie04} stated above; at least for $n\in [1,2]$ and $x=x_c(n)$. In light of the conjecture, one expects that the conclusion of Theorem~\ref{thm:loop-macroscopic} remains in effect also for $n\in[0,2]$ and $x\ge x_c(n)$, while exponential decay of loop lengths takes place when $x<x_c(n)$. While it is expected that when increasing $x$ the model cannot transition from power-law decay of loop lengths to exponential decay, this seems difficult to prove (the measure $\bbP_{\Omega,n,x}^\xi$ is in general not monotonic in~$x$) and is currently only known for $n=1$ and $x\le 1$ (the ferromagnetic Ising model). Still, the theorem implies that (at least one) transition occurs for $n\in [1,2]$: Exponential decay takes place for small $x$ while power-law decay is present at $x=x_c(n)$.

\begin{figure}
	\centering
	\includegraphics[angle=90,height=11cm,width=12cm]{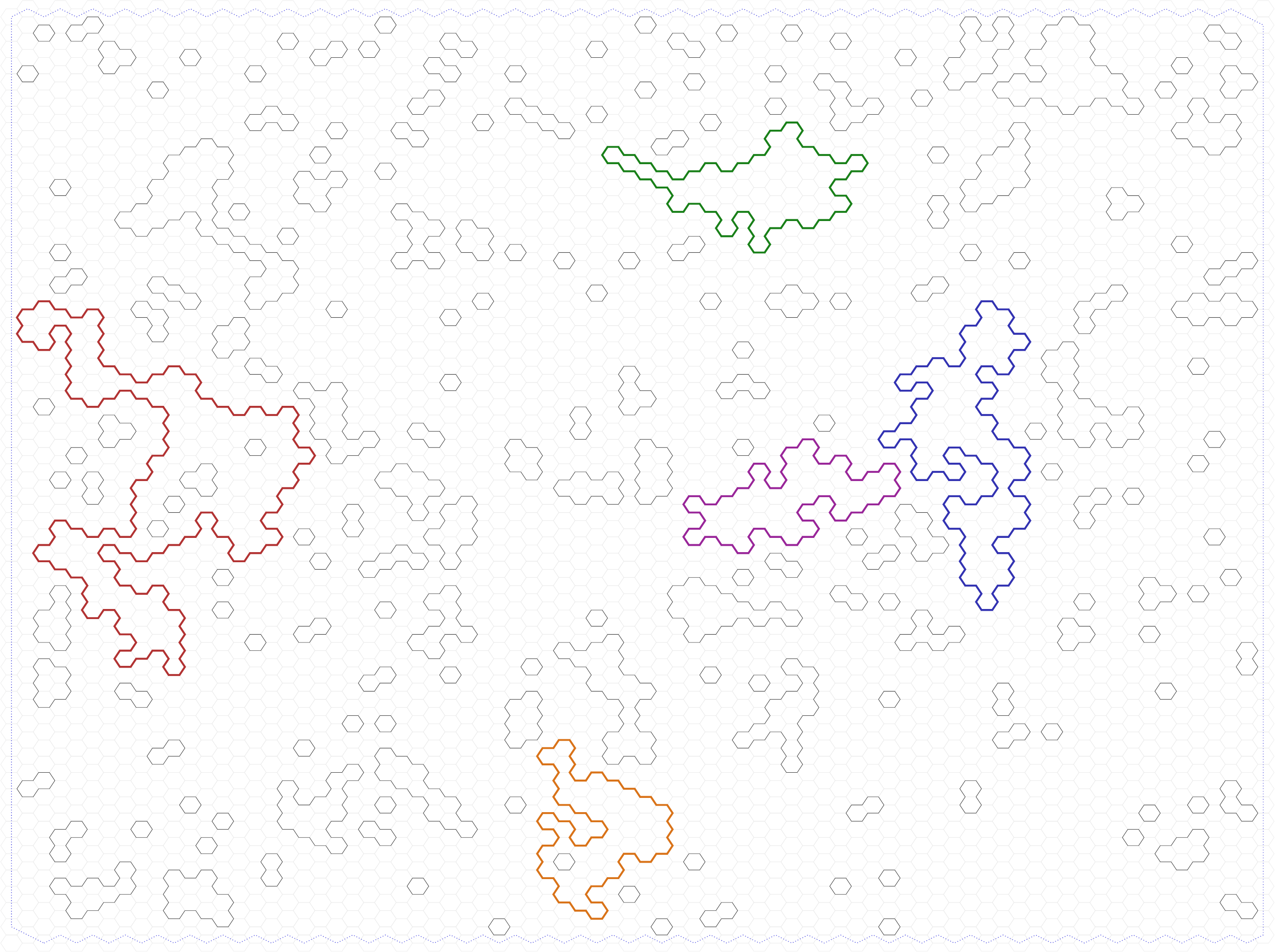}
	\caption{A sample of a random loop configuration on the critical line: $n=1.4$ and $x=x_c(n)\approx 0.6$. The longest loops are highlighted (from longest to shortest: red, blue, green, purple, orange). Theorem~\ref{thm:loop-macroscopic} shows that long loops are likely when $n \in [1,2]$ and $x=x_c(n)$.}
	\label{fig:loop-sample-critical}
\end{figure}
\begin{figure}
	\centering
	\begin{subfigure}[t]{.48\textwidth}
		\includegraphics[angle=90,height=7.5cm,width=\textwidth]{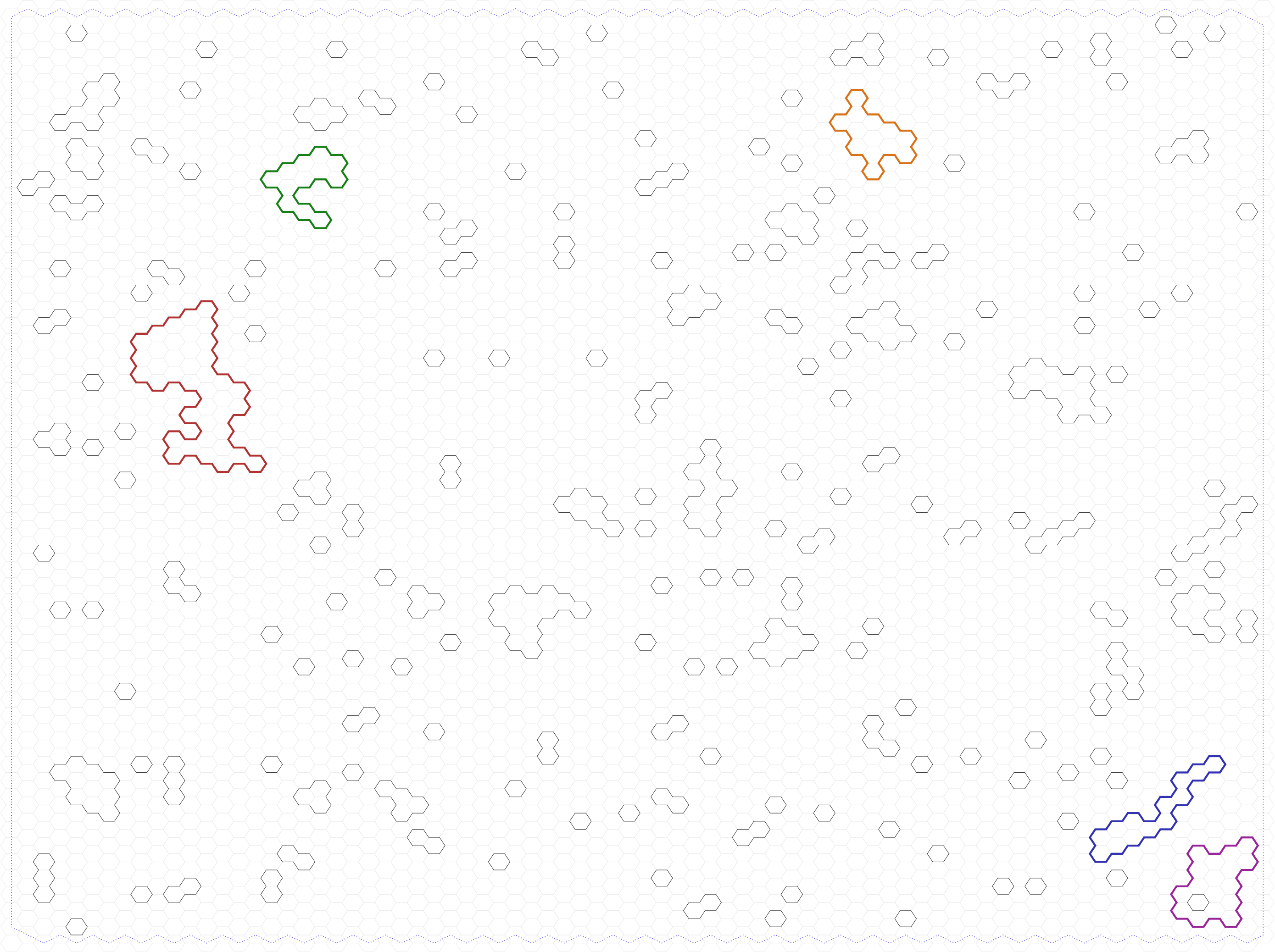}
		\caption{$n=1.4$ and $x=0.57<x_c(n)$.}
		\label{fig:loop-sample-n1.4-x0.57}
	\end{subfigure}%
	\begin{subfigure}{15pt}
		\quad
	\end{subfigure}%
	\begin{subfigure}[t]{.48\textwidth}
		\includegraphics[angle=90,height=7.5cm,width=\textwidth]{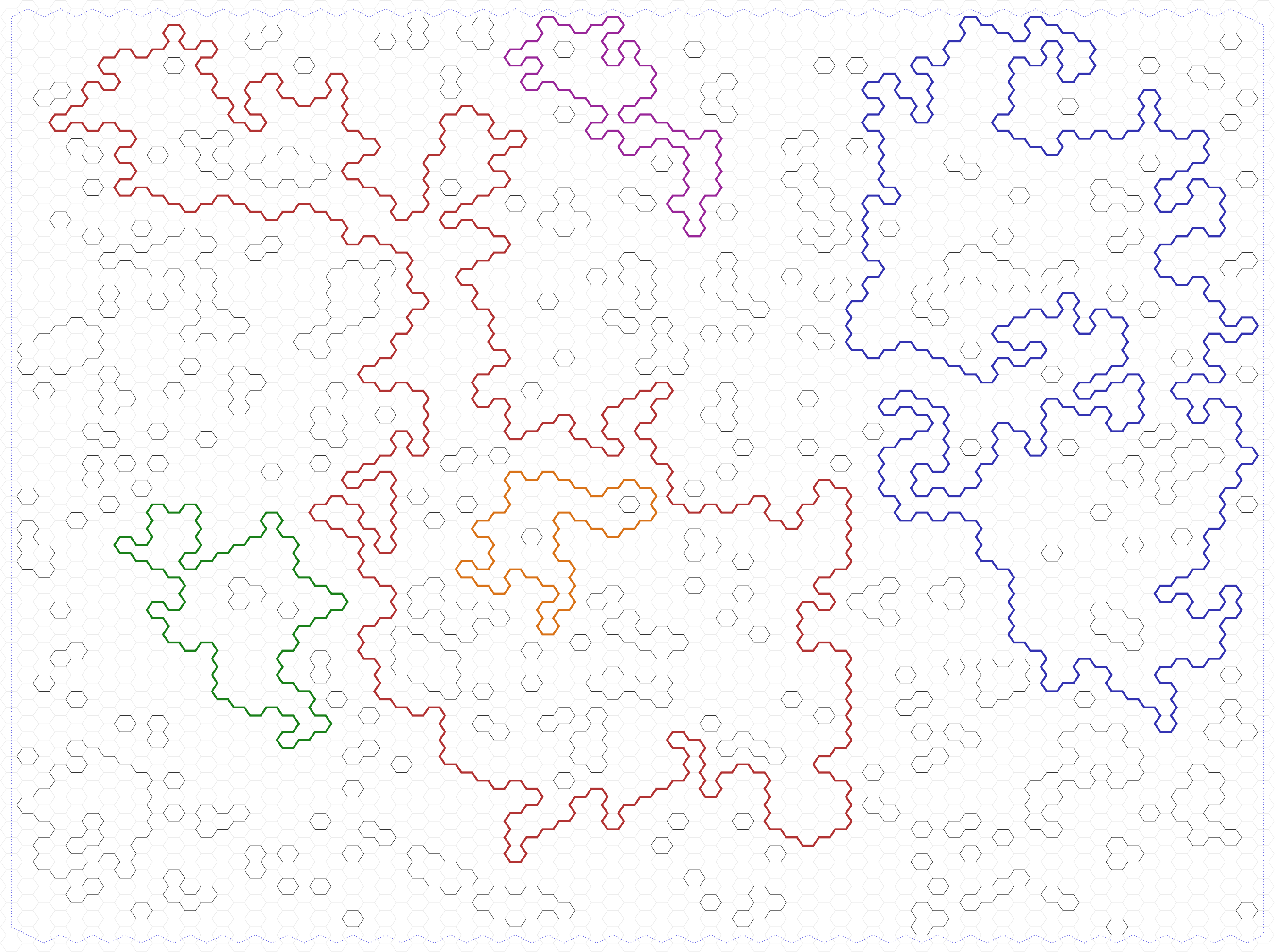}
		\caption{$n=1.4$ and $x=0.63>x_c(n)$.}
		\label{fig:loop-sample-n1.4-x0.63}
	\end{subfigure}
	\caption{Samples of random loop configuration below and above the critical line.}
	\label{fig:loop-samples}
\end{figure}

The proof of Theorem~\ref{thm:loop-macroscopic} combines probabilistic techniques with parafermionic observables. These observables first appeared in the context of the Ising model (where they are called order-disorder operators) and dimer models. They were later extended to the random-cluster model and the loop $O(n)$ model by Smirnov \cite{Smi06} (see \cite{DumSmi12a} for more details). They also appeared in a slightly different form in several physics papers going back to the early eighties \cite{FraKad80,BerLeC91} as well as in more recent papers studying a large class of models of two-dimensional statistical physics \cite{IkhCar09,RajCar07,RivCar06,Car09,IkhWesWhe13}.
They have been the focus of much attention in recent years and became a classical tool for the study of these models.

The precise property of these observables that will be used in this article is the fact that discrete contour integrals of parafermionic observables vanish for the special value of parameters $0\le n\le 2$ and $x=x_c(n)$.
Such an input was already used in~\cite{DumSmi12,Gla13} for the self-avoiding walk model, and in~\cite{DumSidTas13,Dum12} for random-cluster models. In our model, additional difficulties arise from the rigid structure of loop configurations. In order to overcome these difficulties, we develop a gluing technique, which, we hope, will be useful in the study of the loop~$O(n)$ model also when~$x\ne x_c(n)$.

The result for the loop $O(n)$ model on Nienhuis' critical line is derived from a clearer picture of the loop $O(n)$ model in the wider regime of parameters, $n\ge 1$, $x\le \tfrac1{\sqrt n}$.
This picture, in turn, is based on positive association (strong FKG) properties of the spin representation described in the next section.
These yield the following result, which includes the uniqueness of the translation-invariant (or even periodic) infinite-volume loop measure, as well as a dichotomy between two possible behaviors of the model --- exponential decay of loop lengths ({\bf A1}) vs. Russo--Seymour--Welsh type behavior ({\bf A2}). The two alternatives correspond to the predicted subcritical and critical (dilute or dense) behaviors of the model.

Let $\mathsf R$ be the largest diameter of a loop surrounding the origin (where~$\mathsf{R}=0$ if there is no such loop, and~$\mathsf{R}=\infty$ if there are infinitely many of them). A measure is \emph{periodic} if it is invariant under translations in a full-rank lattice.

\begin{theorem}\label{thm:loop-dichotomy}
For $n\ge1$, $x\le\tfrac1{\sqrt n}$, there exists a unique periodic Gibbs measure~$\bbP_{n,x}$ for the loop~$O(n)$ model with edge-weight~$x$. The measure~$\bbP_{n,x}$ is supported on loop configurations with no infinite paths, is extremal, is invariant to all automorphisms of $\bbH$, and can be obtained as a thermodynamical limit under empty boundary conditions. Furthermore, exactly one of the following occurs:
\begin{itemize}[noitemsep]
\item[\text{\bf A1}] There exists $c>0$ such that $\bbP_{n,x}[\mathsf R\ge k]\le\exp(-ck)$ for any $k\geq1$.
\item[\text{\bf A2}] There exists $c>0$ such that for any $k>1$ and any loop configuration $\xi$,
\begin{equation}\label{eq:RSW5}
c\le \bbP_{A_k,n,x}^\xi[\exists\text{ a loop in $A_k$ surrounding $0$}]\le 1-c.
\end{equation}
In particular, $\bbP_{n,x}$ is the unique Gibbs measure and $\mathsf R=\infty$ almost surely.
\end{itemize}
\end{theorem}
Both~\eqref{eq:RSW5} and {\bf P5} of Theorem~\ref{thm:cluster-dichotomy} below (from which~\eqref{eq:RSW5} is derived) should be understood as a box-crossing property; they imply many other properties of the model, including mixing at a power-law rate and fractal sub-sequential scaling limits. We refer to the corresponding results in \cite{DumSidTas13} for details. Also note that for $n\gg1$, the model was proved \cite{DumPelSam14} to satisfy~{\bf A1} for any $x\in(0,\infty)$.

When alternative~{\bf A2} holds, \eqref{eq:RSW5} implies the stronger statement that the weak limit of finite-volume measures under any boundary conditions is~$\bbP_{n,x}$. On the other hand, when alternative~{\bf A1} holds, we do not rule out the existence of non-periodic Gibbs measure.
We mention that in the case of~$n=2,x=1$, it is known that there is a unique Gibbs measure, but it remains open whether all weak limits coincide with it~\cite{GlaMan18}.

Alternative~{\bf A1} implies that the probability of having a loop surrounding the origin and entirely contained in a given domain is exponentially small for {\it some} boundary conditions. We expect this to hold for {\it any} boundary conditions and any (possibly non-simply connected) domain whenever~{\bf A1} is realized; see~\cite{GlaMan18b} for the proof for~$n\geq 1$ and~$x < \tfrac1{\sqrt{3}}+\varepsilon$.

\begin{remark}
   One may speculate that the length of loops in a domain is reduced, in a suitable sense, by adding a hole to the domain (with vacant boundary conditions along it). A natural attempt to prove such a statement then goes through the positive association of the spin representation described in the next section. This, however, does not seem to lead to the desired conclusion as the addition of the hole may be interpreted as restricting the spins on its boundary to take the same value, but such a restriction is not of ``definite sign'' and thus does not lead to a comparison with the initial distribution.
\end{remark}

We end this part of the introduction with a discussion of related models.

First, for certain values of $n$, the loop $O(n)$ model admits a nearest-neighbor representation. More precisely, when $n$ is the largest eigenvalue of the adjacency matrix of a graph, the loop $O(n)$ model is represented as the domain walls for a model on the triangular lattice with nearest-neighbor interactions (more precisely, face interactions). Taking the graph to be one of the ADE diagrams yields a representation with $n\in [1,2]$. Special cases include the dilute Potts model (of which more is said in the next section), the restricted Solid-On-Solid models and integer-valued Lipschitz height functions. ADE models were originally introduced in~\cite{Pas87}; see~\cite{Car10,Nie10} and~\cite[Section 3.3.2]{PelSpi17} for more information. Our results can then be recast in the language of these models.

We elaborate on the special case of Lipschitz height functions, arising when $n=2$. The functions are defined at the faces of~$\Lambda_k$, are normalized to~$0$ on the boundary of~$\Lambda_k$ and differ by~$1$, $0$ or~$-1$ at any two neighbouring faces. The probability of each function~$F$ is proportional to~$x$ to the number of pairs of adjacent faces~$u$ and~$v$ where~$F(u)\neq F(v)$. The loops represent the level lines of the height function, with each level line equally likely to be increasing or decreasing. Theorem~\ref{thm:loop-macroscopic} then implies that at~$x=1/\sqrt{2}$ there are typically $\log k$ level lines surrounding the origin and thus the height at the origin has fluctuations of order~$\sqrt{\log k}$. Recently, the same statement was proven in~\cite{GlaMan18} for~$x=1$ (uniform distribution over Lipschitz height functions); in contrast, the fluctuations were shown~\cite{GlaMan18b} to be bounded when~$x<1/\sqrt{3}+\varepsilon$ (corresponds to alternative~{\bf A1} in Theorem~\ref{thm:loop-dichotomy}).

Our results may further be compared with the phase diagram of the spin $O(2)$ model (the XY model). Following Berezinskii~\cite{Ber71}, Kosterlitz and Thouless~\cite{KosTho72,KosTho73}, and the celebrated rigorous proof by Fr\"ohlich and Spencer~\cite{FroSpe81}, the two-dimensional XY model is known to exhibit a phase transition from a regime with exponential decay of correlations at high temperature to a regime with power-law decay of correlations at low temperature --- the so-called Berezinskii-Kosterlitz-Thouless (BKT) transition. The loop $O(n)$ model is only an approximate graphical representation of the spin $O(n)$ model so results do not transfer between them. Still, the spin-spin correlation of the XY model is approximately, in the same sense as before, equal to the ratio between the partition function of the loop $O(2)$ model augmented by an additional path and the partition function of the usual loop $O(2)$ model; see~\cite[Eq. (2)]{DumPelSam14} for the precise formula. Similar ratios are considered in Section~\ref{sec:rel-weight} where they are shown to have a power-law lower bound. It is worth mentioning that obtaining such a lower bound is the main difficulty in the proof of the BKT transition for the XY model and that this is achieved, in~\cite{FroSpe81}, via the analysis of an integer-valued height function which is in an exact correspondence with the XY model. We mention that a different graphical representation is employed in~\cite{Cha98} to study ratios of partition functions of the XY model.

\subsection{The spin representation}
\label{sec:spin-rep}
As mentioned above, the loop $O(1)$ model can be seen as the Ising model on the triangular lattice~$\bbT$. More formally, the set of \emph{spin configurations} $\sigma=(\sigma_x:x\in\bbT)$ in $\{-1,1\}^\bbT$ is in bijection with the set $\calE(\bbH,\emptyset) \times \{-1,1\}$ of all loop configurations on~$\bbH$ via the mapping $\sigma \mapsto (\omega(\sigma),\sigma_0)$, where $\omega(\sigma)$ is the loop configuration composed of edges of $\bbH$ separating two hexagons $u$ and $v$ with $\sigma_u\ne\sigma_v$. In words, $\omega(\sigma)$ is the loop configuration obtained by taking the boundary walls between pluses and minuses. We use the denomination {\em plus} and {\em minus} for a vertex $x$ to denote the fact that the {\em spin} $\sigma_x$ is equal to $+1$ or $-1$, respectively.

In this section, we extend this correspondence to the loop $O(n)$ model for any $n>0$, by introducing a probability measure on spin configurations which is closely related to the loop $O(n)$ measure. We call this the \emph{spin representation} of the loop $O(n)$ model.

\begin{figure}
	\centering
	\begin{subfigure}[t]{.48\textwidth}
		\includegraphics[angle=90,height=7.5cm,width=\textwidth]{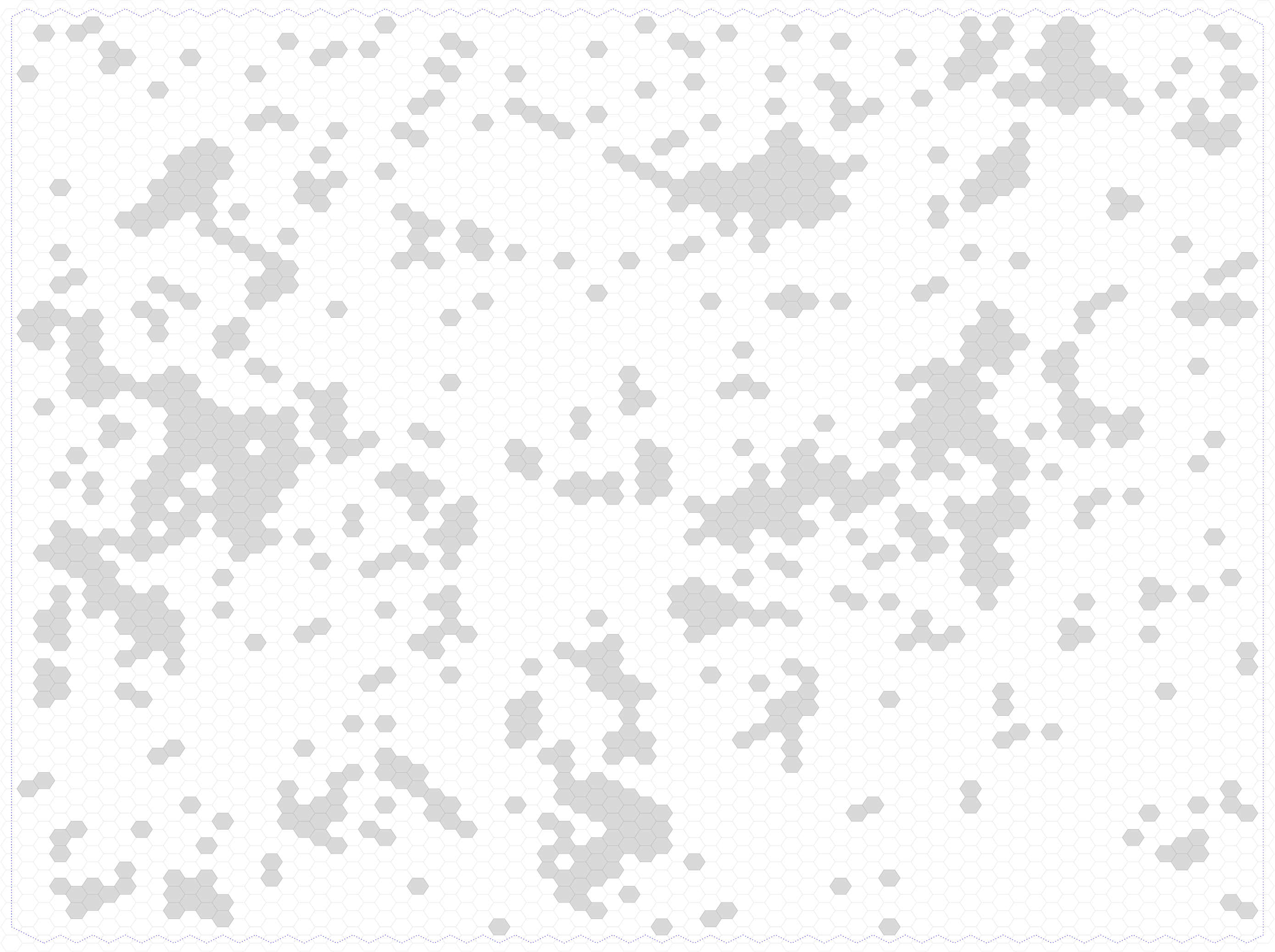}
		\caption{The spin representation.}
		\label{fig:loop-sample-cluster-repr-1}
	\end{subfigure}%
	\begin{subfigure}{15pt}
		\quad
	\end{subfigure}%
	\begin{subfigure}[t]{.48\textwidth}
		\includegraphics[angle=90,height=7.5cm,width=\textwidth]{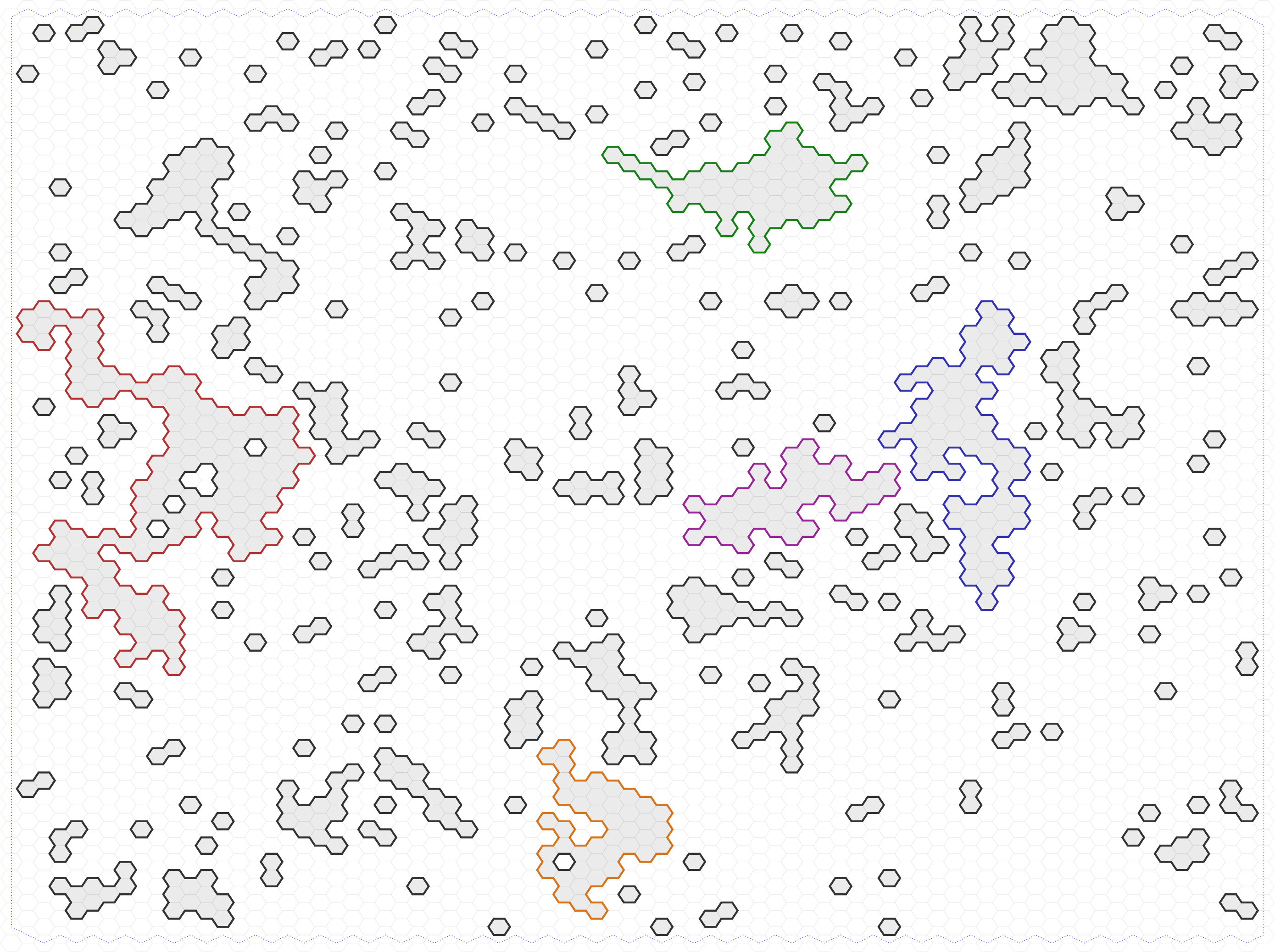}
		\caption{Loops and spins.}
		\label{fig:loop-sample-cluster-repr-2}
	\end{subfigure}
	\caption{Loop configurations on the hexagonal lattice are in bijection with colorings of the dual triangular lattice by two colors (up to a global permutation of the two colors): the loops are obtained from the coloring as the boundaries of clusters, and the coloring is obtained from the loops by switching color every time an edge of a loop is crossed.}

	\label{fig:loop-samples-cluster-repr}
\end{figure}

For~$\tau\in\{-1,1\}^\bbT$ and $G\subset \bbT$ finite, let $\Sigma(G,\tau)\subset \{-1,1\}^\bbT$ be the set of spin configurations that coincide with~$\tau$ outside of~$G$. The spin representation measure with edge-weight $x>0$ and loop-weight $n>0$ is the probability measure~$\mu_{G,n,x}^\tau$ on~$\Sigma(G,\tau)$ defined by the formula
\begin{equation}
\label{eq:dilute-Potts-measure}
\mu_{G,n,x}^\tau (\sigma) :=\frac{n^{k(\sigma)} x^{e(\sigma)}}{{\bf Z}_{G,n,x}^\tau}  ,\,
\end{equation}
for every $\sigma\in\Sigma(G,\tau)$, where~$k(\sigma)+1$ is the sum of the number of connected components of pluses and minuses in~$\sigma$ that intersect $G$ or its neighborhood, $e(\sigma) := \sum_{u \sim v} \1_{\sigma_u \neq \sigma_v}$ is the number of edges $\{u,v\}$ that intersect $G$ and have $\sigma_u \neq \sigma_v$, and ${\bf Z}_{G,n,x}^\tau$ is the unique constant making $\mu_{G,n,x}^\tau$ a probability measure. Clearly, both~$k(\sigma)$ and~$e(\sigma)$ depend on~$G$, but we omit it in the notation for brevity.

The next proposition states that~\eqref{eq:dilute-Potts-measure} indeed defines a representation of the loop $O(n)$ model.

\begin{proposition}
	\label{prop:bijection}
Let~$G \subset \bbT$ be finite and let $\Omega$ be the set of edges of $\bbH$ bordering a hexagon in~$G$. Then, for any $\tau\in \{-1,1\}^\bbT$ and any $n,x>0$, if $\sigma$ has law $\mu_{G,n,x}^\tau$, then $\omega(\sigma)$ has law $\bbP_{\Omega,n,x}^{\,\omega(\tau)}$.
\end{proposition}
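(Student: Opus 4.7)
My plan is to show that the map $\sigma\mapsto\omega(\sigma)$ is a weight-preserving bijection from $\Sigma(G,\tau)$ onto $\calE(\Omega,\omega(\tau))$, from which the proposition follows by comparing normalization constants.

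I would first verify the bijection. Since $\sigma\equiv\tau$ on $\bbT\setminus G$ and every edge of $\bbH$ outside $\Omega$ borders two hexagons lying in $\bbT\setminus G$, the configuration $\omega(\sigma)$ coincides with $\omega(\tau)$ on $\bbH\setminus\Omega$, so the map lands in $\calE(\Omega,\omega(\tau))$. For the inverse, given $\omega\in\calE(\Omega,\omega(\tau))$, I reconstruct $\sigma$ by setting $\sigma:=\tau$ on $\bbT\setminus G$ and, for each $h\in G$, defining $\sigma_h:=\tau_{h_0}(-1)^c$, where $h_0\in\bbT\setminus G$ is a fixed base point and $c$ is the number of edges of $\omega$ crossed by an arbitrary path in $\bbT$ from $h_0$ to $h$. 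This is independent of the path chosen because every vertex of $\omega\subset\bbH$ has even degree, so $\omega$ is a closed $\bbZ/2$-cocycle on $\bbT$ and crosses every cycle in $\bbT$ an even number of times; the compatibility condition $\omega\in\calE(\Omega,\omega(\tau))$ in turn guarantees independence of the choice of $h_0$. By construction $\omega(\sigma)=\omega$, and injectivity of $\sigma\mapsto\omega(\sigma)$ is clear.

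It remains to match the weights. Planar duality identifies edges $\{u,v\}\in E(\bbT)$ with $\{u,v\}\cap G\neq\emptyset$ bijectively with the edges of $\bbH$ in $\Omega$; under this identification $e(\sigma)=|\omega(\sigma)\cap\Omega|$, which is the quantity denoted $|\omega|$ in the definition of $\bbP_{\Omega,n,x}^{\omega(\tau)}$. The crux of the proof is the combinatorial identity
\[
k(\sigma)=\ell(\omega(\sigma)),
\]
and this is the main obstacle, since $k$ is defined through global clusters of $\sigma$ meeting $G\cup\partial G$ while $\ell$ only counts loops of $\omega(\sigma)$ meeting $\Omega$; the two indices must be reconciled. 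My approach is a planar Euler-formula argument: viewing $\omega(\sigma)$ as a planar subgraph of $\bbH$, its faces are in bijection with the clusters of $\sigma$ (adjacent hexagons of $\bbT$ lie in the same cluster iff they lie in the same face). To pass to the local counts, I would verify loop-by-loop that a loop of $\omega(\sigma)$ not meeting $\Omega$ has exactly one of its two adjacent faces disjoint from $G\cup\partial G$ -- because a crossing of such a loop by a path in $G\cup\partial G$ would force it to contain an edge bordering $G$, hence in $\Omega$, a contradiction -- whereas a loop meeting $\Omega$ has both adjacent faces meeting $G\cup\partial G$ (the hexagons on either side of an $\Omega$-edge lie in $G\cup\partial G$). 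A straightforward induction on the symmetric difference $(\omega\triangle\omega(\tau))\cap\Omega$, in which single-hexagon spin flips in $G$ simultaneously change $k(\sigma)$ and $\ell(\omega(\sigma))$ by the same amount via a Jordan-curve analysis of loop splits/merges, then yields the identity.

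Combining the two identities, the bijection transports the weight $n^{k(\sigma)}x^{e(\sigma)}$ to $n^{\ell(\omega)}x^{|\omega|}$. Consequently the partition functions $\mathbf{Z}_{G,n,x}^\tau$ and $Z_{\Omega,n,x}^{\omega(\tau)}$ agree and the pushforward of $\mu_{G,n,x}^\tau$ under $\sigma\mapsto\omega(\sigma)$ is exactly $\bbP_{\Omega,n,x}^{\omega(\tau)}$, as claimed.
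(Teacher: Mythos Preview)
Your argument contains a genuine gap: the identity $k(\sigma)=\ell(\omega(\sigma))$ that you call the crux is \emph{false} for general boundary conditions $\tau$. Take for instance $\tau$ equal to $+1$ on a half-plane and $-1$ on the other, with $G$ a single hexagon on the interface. Then for either value of $\sigma$ on $G$ there are exactly two infinite clusters touching $G\cup\partial G$ (so $k(\sigma)=1$), while $\omega(\sigma)$ consists of a single bi-infinite path and no loop at all (so $\ell(\omega(\sigma))=0$). The correct relation, which is what the paper establishes, is
\[
k(\sigma)-\ell(\omega(\sigma))=\#\{\text{infinite paths of }\omega(\sigma)\text{ intersecting }\Omega\},
\]
and the point is only that the right-hand side is constant on $\Sigma(G,\tau)$ (it is determined by the rays of $\omega(\tau)$ entering $\Omega$), so the discrepancy is absorbed by the normalizing constants. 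Your Euler-formula reasoning goes astray precisely because it only tracks loops: an infinite path meeting $\Omega$ separates two clusters that both touch $G\cup\partial G$, contributing to $k$, yet contributes nothing to $\ell$. Likewise, your final assertion that $\mathbf{Z}_{G,n,x}^\tau=Z_{\Omega,n,x}^{\omega(\tau)}$ is wrong in general; the two differ by a fixed power of $n$.

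The repair is easy once you see this: replace the false identity by the weaker statement that $k(\sigma)-\ell(\omega(\sigma))$ is independent of $\sigma\in\Sigma(G,\tau)$. Your induction on single-hexagon spin flips in $G$ actually proves exactly this---each flip changes $k$ and $\ell$ by the same amount---without ever needing to evaluate the common difference. The paper reaches the same conclusion by iteratively flipping all finite clusters meeting $G\cup\partial G$ until only infinite ones remain, at which point $\ell=0$ and $k$ equals the number of infinite paths.
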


\begin{proof}
The following combinatorial relations hold:
\begin{align*}
e(\sigma) = |\omega(\sigma)|
\quad\text{ and } \quad
k(\sigma) -  \ell(\omega(\sigma))=\#\{\text{infinite paths in $\omega(\sigma)$ intersecting $\Omega$}\},
\end{align*}
where the first equality is trivial and the second can be obtained by iteratively flipping signs in all finite clusters of~$\sigma$ which intersect~$G$ or are adjacent to $G$. Noting that the quantity on the right-hand side is constant for~$\sigma\in \Sigma(G,\tau)$ finishes the proof.
\end{proof}

An important property of the Ising model is its monotonicity (FKG inequality and monotonicity with respect to boundary conditions). This tool has become central in the study of the Ising model and luckily for us the spin representation shares this property with the Ising model for certain values of $x$ and $n$. Define a partial order on $\{-1,1\}^\bbT$ as follows: $\tau\le\tau'$ if $\tau_x\le \tau'_x$ for all $x\in \bbT$. We say that $A\subset \{-1,1\}^\bbT$ is increasing if its indicator function is an increasing function for this partial order.

\begin{theorem}\label{thm:FKG}
	Fix $n \ge 1$ and $nx^2 \le 1$. Then for any finite $G\subset\bbT$,
	\begin{itemize}[noitemsep]
		\item {\rm (strong FKG inequality)} for any $\tau \in \{-1,1\}^\bbT$ and any two increasing events $A$ and $B$,
		\begin{equation}
			\label{eq:FKG-thm}\tag{FKG}
			\mu_{G,n,x}^\tau(A\cap B)\ge\mu_{G,n,x}^\tau(A) \cdot \mu_{G,n,x}^\tau(B).
		\end{equation}
		\item {\em (comparison between boundary conditions)} for any $\tau\le\tau'$ and any increasing event $A$,
		$$\mu_{G,n,x}^\tau(A)\le \mu_{G,n,x}^{\tau'}(A).$$
	\end{itemize}
\end{theorem}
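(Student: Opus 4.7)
The plan is to establish the strong FKG inequality via Holley's criterion: since $\mu_{G,n,x}^\tau$ is strictly positive, strong FKG is equivalent to the FKG lattice condition
\[
\mu_{G,n,x}^\tau(\sigma\vee\sigma')\,\mu_{G,n,x}^\tau(\sigma\wedge\sigma')\ge \mu_{G,n,x}^\tau(\sigma)\,\mu_{G,n,x}^\tau(\sigma'),
\]
and it suffices to verify this for pairs $\sigma,\sigma'$ differing at exactly two sites. I would fix distinct sites $u,v\in G$ and a configuration $\eta\in\{-1,+1\}^{G\setminus\{u,v\}}$, write $\sigma^{ab}$ for the extension with $\sigma_u=a$, $\sigma_v=b$, and cancel common factors to reduce the inequality to
\[
n^{\Delta k}\,x^{\Delta e}\ge 1,\qquad \Delta k:=k(\sigma^{++})+k(\sigma^{--})-k(\sigma^{+-})-k(\sigma^{-+}),
\]
with $\Delta e$ defined analogously.

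I would then compute both second differences. Only the edge $\{u,v\}$ can contribute to $\Delta e$, since any edge with a single endpoint in $\{u,v\}$ has a disagreement indicator depending on one variable and therefore cancels; this gives $\Delta e=-2\,\1_{u\sim v}$. For the cluster count, I would use the identity that setting a spin $s\in\{-1,+1\}$ at a previously unassigned site $w$ changes the cluster count by $1-N_w^s$, where $N_w^s$ is the number of $s$-clusters of the rest adjacent to $w$. Applying this twice at $u$ and $v$ yields
\[
\Delta k = (A_u^- - A_u^+) + (B_u^+ - B_u^-),
\]
where $A_u^s$ (resp.\ $B_u^s$) counts the $+$- (resp.\ $-$-) clusters adjacent to $u$ in $\eta\cup\{\sigma_v=s\}$.

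The heart of the argument is a case analysis on whether $u\sim v$. When $u\not\sim v$, flipping $\sigma_v$ from $+$ to $-$ can only split $+$-clusters adjacent to $u$ (and symmetrically for $-$), so both brackets are nonnegative, $\Delta k\ge 0$, and the condition follows from $n\ge 1$. When $u\sim v$, a short computation tracking how the new $v$-cluster (which now contains a neighbor of $u$) absorbs the $+$-clusters of $\eta$ adjacent to $v$ yields $\Delta k=A'_{uv}+B'_{uv}-2$, where $A'_{uv}$ and $B'_{uv}$ count $+$- and $-$-clusters of $\eta$ adjacent to \emph{both} $u$ and $v$. The crucial geometric input is that in $\bbT$ any two adjacent sites share exactly two common neighbors; the clusters containing those neighbors are adjacent to both $u$ and $v$, forcing $A'_{uv}+B'_{uv}\ge 1$ and so $\Delta k\ge -1$. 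Combined with $\Delta e=-2$, the lattice condition becomes $(nx^2)^{-1}\ge 1$, which is exactly the hypothesis $nx^2\le 1$. The comparison between boundary conditions then follows from the strong FKG inequality by the standard argument of enlarging $G$ to include the sites where $\tau$ and $\tau'$ disagree and conditioning on those boundary spins. I expect the main obstacle to be the bookkeeping producing $\Delta k=A'_{uv}+B'_{uv}-2$ in the adjacent case; once this is in hand, the tight match with $nx^2\le 1$, arising from the two shared neighbors per edge in $\bbT$, makes the rest essentially mechanical.
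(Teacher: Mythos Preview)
Your proposal is correct and follows essentially the same route as the paper: both reduce to the FKG lattice condition for configurations differing at two sites, compute $\Delta e=-2\cdot\1_{u\sim v}$, and analyze $\Delta k$ according to whether $u\sim v$, using that adjacent vertices in $\bbT$ share two common neighbors. Your explicit formula $\Delta k=A'_{uv}+B'_{uv}-2$ in the adjacent case is a mild streamlining of the paper's argument, which instead splits into the two sub-cases ``common neighbors have equal spins'' ($\Delta k\ge -1$) versus ``different spins'' ($\Delta k\ge 0$); and your derivation of the comparison between boundary conditions by enlarging $G$ and conditioning is exactly the paper's Corollary.
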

While fairly simple to prove, this theorem is our main toolbox for the study of the loop $O(n)$ model. In particular, it allows us to use techniques developed in~\cite{DumSidTas13} to prove the following dichotomy theorem for the spin representation. Before stating it, we remark that following this work, a similar FKG inequality was shown~\cite{GlaMan18} to hold when~$n\geq 2$, $(n-1)x^2 \leq 1$ and, through more intricate considerations, allowed to derive the dichotomy theorem for~$n=2$, $x=1$ (uniform Lipschitz functions).

By Theorem~\ref{thm:cluster-infinite-volume} below, infinite-volume limits~$\mu_{n,x}^+$ and~$\mu_{n,x}^-$ of~$\mu_{G,n,x}^+$ and~$\mu_{G,n,x}^-$ as $G\nearrow\bbT$ are well-defined, invariant under translations and ergodic.
Recall that $\Lambda_k \subset \bbT$ is the ball of radius $k$ around the origin. Write $V\longleftrightarrow W$ if some vertex of $V$ is connected to some vertex of $W$ by a path of adjacent pluses. We also write $v\longleftrightarrow\infty$ for the event that $v$ is in an infinite connected component of pluses.

\begin{theorem}\label{thm:cluster-dichotomy}
	For $n\ge1$ and $x\le\tfrac1{\sqrt n}$, the following conditions are equivalent:
	\begin{itemize}[noitemsep]
		\item[{\bf P1}] $\mu_{n,x}^+[0\longleftrightarrow\infty]=0$,
		\item[{\bf P2}] $\mu_{n,x}^-=\mu_{n,x}^+$,
		\item[{\bf P3}] $\sum_{v\in \bbT}\mu_{n,x}^-[0\longleftrightarrow v]=\infty$,
		\item[{\bf P4}] For any $v\in\bbT$, $$\lim_{k\rightarrow\infty}-\tfrac1k\log \mu_{n,x}^-[0\longleftrightarrow kv]=0,$$
		\item[{\bf P5}] There exists $c>0$ such that for any $k\ge 1$,
		$$\mu_{\Lambda_{2k},n,x}^-[\exists \text{ a circuit of neighboring pluses surrounding $\Lambda_k$ in $\Lambda_{2k}$}]\ge c.$$
	\end{itemize}
\end{theorem}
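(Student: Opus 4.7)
The plan is to close a cycle of implications among the five conditions, relying on the strong FKG inequality and monotonicity in boundary conditions from Theorem~\ref{thm:FKG}, the global spin-flip symmetry $\sigma\mapsto -\sigma$ of $\mu_{G,n,x}^\tau$ (which interchanges $\mu_{n,x}^+$ and $\mu_{n,x}^-$ once the boundary condition is also flipped), and the planarity of $\bbT$, via which an ergodic translation-invariant measure on $\{-1,+1\}^\bbT$ cannot support both an infinite plus and an infinite minus connected component (Zhang's argument). These are the same ingredients that drive the random-cluster dichotomy of~\cite{DumSidTas13}, and the strategy is to port the arguments of that paper to the present cluster representation.

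\medskip

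\textit{Easy implications.} Assuming (\textbf{P5}), chaining plus-circuits in the dyadic annuli $\Lambda_{2^{j+1}}\setminus\Lambda_{2^j}$ via FKG and monotonicity in boundary conditions gives a polynomial lower bound $\mu_{n,x}^-[0\leftrightarrow v]\ge c|v|^{-\alpha}$; in two dimensions this immediately yields (\textbf{P3}) and (\textbf{P4}). The equivalence (\textbf{P1})~$\Leftrightarrow$~(\textbf{P2}) is the standard Ising-type dichotomy: if $\mu^+=\mu^-$ then by spin-flip symmetry this common measure assigns equal probability to the existence of an infinite plus cluster and of an infinite minus cluster, and the planarity argument forbids both to be positive; conversely, the standard monotone coupling of $\mu^-\le \mu^+$ across exhausting domains shows that the set of disagreement sites lies in the infinite plus cluster of the upper marginal, and (\textbf{P1}) then forces equality in the infinite-volume limit. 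The implication (\textbf{P5})~$\Rightarrow$~(\textbf{P1}) follows because the spin-flip symmetry converts the plus-circuits of (\textbf{P5}) into minus-circuits surrounding $0$ at every dyadic scale, which block $0$ from lying in any infinite plus cluster.

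\medskip

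\textit{The hard implication.} What remains is to derive (\textbf{P5}) from (\textbf{P1}), (\textbf{P3}), or (\textbf{P4}); this is the main obstacle. Following~\cite{DumSidTas13} the strategy is two-stage. \emph{Sharpness}: divergence of the susceptibility (\textbf{P3}), subexponential decay (\textbf{P4}), or absence of an infinite cluster (\textbf{P1}) first rules out exponential decay of $\mu^-[0\leftrightarrow v]$, and sharpness then upgrades this to a polynomial lower bound on the probability under $\mu^-$ of a plus-crossing of a square. This step can be carried out either via a Simon--Lieb differential inequality for the susceptibility or via an OSSS-type randomized-algorithm argument in the style of Duminil-Copin, Raoufi and Tassion, both of which require only the FKG inequality. \emph{RSW bootstrap}: given a uniform lower bound on plus-crossings of squares, Zhang's argument---with the plus-minus symmetry of $\mu_{G,n,x}^\tau$ now playing the role of planar self-duality---propagates square-crossings into crossings of arbitrarily elongated rectangles (if plus-crossings of a square were too unlikely in some direction, minus-crossings would be likely in the orthogonal direction and, chained by FKG, would produce an infinite minus cluster, contradicting the planarity argument). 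A final FKG gluing turns four long rectangle-crossings into a plus-circuit in an annulus, which is (\textbf{P5}).

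\medskip

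\textit{Main obstacle.} The chief difficulty lies in the hard implication: the cluster representation is not self-dual in the way the critical random-cluster model is, so the $\pm$-symmetry of $\mu_{G,n,x}^\tau$ must play the role of planar duality throughout sharpness, Zhang's argument, and the gluing constructions. One must further check that all finite-energy estimates used in the flipping steps (breaking long rectangles, rerouting minus-connections into plus-connections, and gluing crossings) remain uniform in the regime $n\ge 1,\ nx^2\le 1$; this is exactly the regime of Theorem~\ref{thm:FKG}, which explains why the same hypothesis $x\le 1/\sqrt n$ delimits the dichotomy.
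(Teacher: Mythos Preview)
Your high-level plan is correct and is essentially what the paper does: port the dichotomy of \cite{DumSidTas13} to the cluster representation, using Theorem~\ref{thm:FKG} (strong FKG and comparison between boundary conditions) together with the global $\pm$-symmetry as a stand-in for planar self-duality. The paper does not write the proof out either; it simply points to \cite{DumSidTas13} and isolates the one new ingredient.

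That ingredient is where your sketch is imprecise. The crucial input in \cite{DumSidTas13} is a \emph{uniform crossing estimate in symmetric domains}, and the paper states the analogue here: for a domain $S$ symmetric about a vertical axis, with four marked boundary points $a,b,c,d$ ($b,d$ mirror images, $a,c$ on the axis) and mixed boundary conditions (plus on the arcs $(ab)$ and $(cd)$, minus elsewhere),
\[
\mu_{S,n,x}^{\rm mix}\bigl[\text{$(ab)\leftrightarrow(cd)$ by pluses}\bigr]\ \ge\ \frac{1}{1+n}.
\]
The proof is one line of $\pm$-symmetry: the complementary event is a minus crossing from $(bc)$ to $(da)$, and since $(bc)$ and $(da)$ are already joined by minuses through the exterior, that event picks up an extra factor $n$ relative to its mirror. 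This lemma then feeds directly into the renormalization scheme of \cite{DumSidTas13}, which is what actually upgrades symmetric-domain crossings to the box-crossing property \textbf{P5}; no separate ``sharpness'' stage is needed.

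Two places in your write-up should therefore be adjusted. First, what you call ``Zhang's argument'' in the RSW bootstrap is not Zhang's argument; Zhang's construction is the non-coexistence input (used, as you correctly say, for \textbf{P1}$\Leftrightarrow$\textbf{P2} and in Theorem~\ref{thm:cluster-infinite-volume}), whereas the passage from square to long-rectangle crossings in \cite{DumSidTas13} is a geometric renormalization built precisely on the symmetric-domain lemma above. Second, your ``Sharpness'' paragraph conflates two distinct technologies: the Simon--Lieb/OSSS route is what the paper invokes for Theorem~\ref{thm:cluster-h-critical} (locating $h_c$), not for the present dichotomy; in \cite{DumSidTas13} the exponential-decay alternative is obtained as the complement of the renormalization, not via a differential inequality. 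Once you replace those two passages by the symmetric-domain lemma and the \cite{DumSidTas13} renormalization, your outline matches the paper's proof.
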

Similarly to the discussion of the box-crossing property in Theorem~\ref{thm:loop-dichotomy}, we wish to highlight the importance of Property {\bf P5}. It implies the decay of the probability of having an arm to distance $k$, as well as many other properties such as tightness of interfaces, universal exponents, etc. We again refer to \cite{DumSidTas13} for examples (and proofs) of applications in the context of the random-cluster model. Let us also remind the reader that {\bf P5} is equivalent to the following {\em box-crossing property} (which is itself related to the {\em Russo-Seymour-Welsh property}, see \cite{DumTas15c} for a review of recent advances on the subject): for  $\rho,\ep>0$,  there exists $c=c(\rho,\ep)>0$ such that for all $k\ge1$ and any $\tau\in\{-1,1\}^\bbT$,
\begin{equation}c\le \mu_{\overline R_k,n,x}^\tau[\exists \text{ a path of pluses crossing $R_k$ from left to right}]\le 1-c,\label{eq:crossing}\end{equation}
where $R_k$ and $\overline R_k$ are ``rectangles of $\bbT$'' defined by
\begin{align*}R_k&:=\{r+{\rm e}^{{\rm i}\pi/3}s~:~0\le r\le k~,~0\le s\le \rho k\},\\
	\overline R_k&:=\{r+{\rm e}^{{\rm i}\pi/3}s~:~-\ep k\le r\le (1+\ep)k~,~-\ep k\le s\le (\rho+\ep) k\}.
\end{align*}
We also remark that Theorem~\ref{thm:loop-macroscopic} shows that, when $n \in [1,2]$ and $x=x_c(n)$, condition {\bf P5} holds, and hence also conditions {\bf P1}-{\bf P4}.

\medskip
To better understand the critical nature of the loop $O(n)$ model it is useful to view it as a particular case of a wider family of models, which is obtained when certain parameters are tuned to specific values (in the spirit of adding a magnetic field to a spin system, or viewing the \emph{critical} random-cluster model as a line in the general $q,p$ parameter space). To this end, it is natural to introduce two external fields $h, h'$, as follows.
The spin representation measure with edge-weight $x>0$, loop-weight $n>0$ and external fields $h,h' \in \R$ is the probability measure~$\mu_{G,n,x,h,h'}^\tau$ on~$\Sigma(G,\tau)$ defined by the formula
\begin{equation}
\label{eq:cluster-measure-with-magnetization}
\mu_{G,n,x,h,h'}^\tau (\sigma) :=\frac{n^{k(\sigma)} x^{e(\sigma)} e^{h r(\sigma) + h'r'(\sigma)}}{{\bf Z}_{G,n,x,h,h'}^\tau}  ,\,
\end{equation}
where~$r(\sigma) := \sum_{u \in G} \sigma_u$ is the sum of spins of $\sigma$ in $G$, $r'(\sigma) := \tfrac12 \sum_{ t=\{u,v,w\} } \sigma_u \1_{\sigma_u=\sigma_v=\sigma_w}$ is one-half of the difference between the number of plus and minus monochromatic triangles that intersect $G$ (where a monochromatic triangle is a set of three mutually adjacent vertices with equal spins), and ${\bf Z}_{G,n,x,h,h'}^\tau$ is the unique constant making $\mu_{G,n,x,h,h'}^\tau$ a probability measure.

We detail two motivations for the above model. First,  in~\cite{Nie91}, Nienhuis discusses the dilute Potts model. Its vacancy/occupancy representation is in a direct correspondence with the model~\eqref{eq:cluster-measure-with-magnetization}, allowing our results to be interpreted in that context. The loop~$O(n)$ model can be viewed as the self-dual surface of the vacancy/occupancy representation as the distribution at $h=h'=0$ is invariant under a global spin flip. Nienhuis predicts that this surface is also critical and that the line~$x=x_c(n)$ is \emph{tricritical} in the sense that the order of the phase transition changes there. Our theorems partially confirm these predictions.

A second motivation comes from the Hammersley-Clifford theorem~\cite{hammersley1971markov}, which shows that if a Markov random field on the triangular lattice has positive density then this density factorizes as a product over triangle interaction terms. This implies that, in the case $n=1$, the representation~\eqref{eq:cluster-measure-with-magnetization} is the most general form of a homogeneous $\{-1,1\}$-valued Markov random field with a positive probability density.

In Proposition~\ref{prop:FKG}, we show that the strong FKG inequality extends to the case of the spin representation measure with an external field if~$nx^2\leq e^{-|h'|}$.  This enables us once again to use the techniques developed for the random-cluster model and to define Gibbs measures~$\mu_{n,x,h,h'}^+$ and~$\mu_{n,x,h,h'}^-$ for the spin representation as weak limits as~$G\nearrow\bbT$ of finite-volume measures $\mu^+_{G,n,x,h,h'}$ and~$\mu^-_{G,n,x,h,h'}$, corresponding to $\tau\equiv +1$ and to $\tau\equiv -1$, respectively.

\begin{theorem}\label{thm:cluster-infinite-volume}
	For any $(n,x,h,h')$ such that $n \ge 1$ and $nx^2 \le e^{-|h'|}$, there exists a Gibbs measure $\mu_{n,x,h,h'}^+$ for the spin representation satisfying the following properties:
	\begin{itemize}[noitemsep]
		\item $\mu_{n,x,h,h'}^+$ is the weak limit of the measure $\mu^+_{G,n,x,h,h'}$ as $G\nearrow\bbT$.
		\item $\mu_{n,x,h,h'}^+$ is extremal and invariant under all automorphisms of $\bbT$.
		\item the $\mu_{n,x,h,h'}^+$-probability that there exists both an infinite connected component of pluses and an infinite connected component of minuses is 0.
	\end{itemize}
	Similarly, there exists a measure $\mu_{n,x,h,h'}^-$ (possibly equal to $\mu_{n,x,h,h'}^+$) satisfying the analogous properties.	
	
	Moreover, any periodic Gibbs measure for the spin representation is a mixture of the two measures~$\mu_{n,x,h,h'}^+$ and~$\mu_{n,x,h,h'}^-$.
	We use the notation~$\mu_{n,x}^+:=\mu_{n,x,0,0}^+$ and~$\mu_{n,x}^-:=\mu_{n,x,0,0}^-$.
\end{theorem}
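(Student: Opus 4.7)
My plan is to follow the standard FKG-based construction of infinite-volume measures for monotone spin systems, in the spirit of the random-cluster treatment in \cite{DumSidTas13}. The crucial input is the strong FKG inequality together with comparison between boundary conditions for $\mu^\tau_{G,n,x,h,h'}$ in the regime $nx^2 \le e^{-|h'|}$; these are supplied by Proposition~\ref{prop:FKG}, extending Theorem~\ref{thm:FKG} to include the external fields. Taking $\tau \equiv +$, the DLR equation combined with comparison of boundary conditions yields that for any finite $D \subset G \subset G'$ the restriction of $\mu^+_{G',n,x,h,h'}$ to $D$ is stochastically dominated by the restriction of $\mu^+_{G,n,x,h,h'}$ to $D$. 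Consequently $\mu^+_{G_k,n,x,h,h'}(A)$ is monotone along any exhaustion $G_k \nearrow \bbT$ for every increasing cylinder event $A$, and inclusion--exclusion promotes this to convergence on arbitrary cylinder events, producing the weak limit $\mu^+_{n,x,h,h'}$. The same monotonicity forces the limit to be independent of the chosen exhaustion, which yields translation invariance by comparing the limits along $G_k$ and along any translate $v + G_k$. For extremality, one first shows that any infinite-volume Gibbs measure $\nu$ at these parameters satisfies $\nu \preceq \mu^+$: conditioning $\nu$ on the configuration outside a finite $G$ produces a finite-volume Gibbs state dominated by $\mu^+_G$, and letting $G \nearrow \bbT$ gives the claim. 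A convex decomposition $\mu^+ = \lambda \nu_1 + (1-\lambda)\nu_2$ with Gibbs $\nu_1,\nu_2$ then forces $\nu_1=\nu_2=\mu^+$ by integrating arbitrary increasing cylinder functions. The measure $\mu^-$ is constructed and analyzed symmetrically.

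For the absence of coexisting infinite $+$ and infinite $-$ clusters I would adapt the classical Zhang/Gandolfi--Keane--Russo argument. Assume both events ``an infinite $+$-cluster exists'' and ``an infinite $-$-cluster exists'' had positive $\mu^+$-probability. Extremality and translation invariance from the previous step force both events to occur almost surely. The strong FKG inequality then yields, for any large box $\Lambda_k$ and any of its four sides, a uniform lower bound on the probability that each of the two infinite clusters touches that side. Planarity now prevents four alternating infinite arms of opposite colors from coexisting around $\partial \Lambda_k$, giving the desired contradiction.

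The most delicate point is the final claim that at $h=h'=0$ the measures $\mu^+$ and $\mu^-$ are the only extremal Gibbs measures. The stochastic sandwich $\mu^- \preceq \nu \preceq \mu^+$ established above applies to every infinite-volume Gibbs measure $\nu$, so when $\mu^+ = \mu^-$ (alternative {\bf P2} of Theorem~\ref{thm:cluster-dichotomy}) uniqueness is immediate. In the remaining case $\mu^+ \ne \mu^-$, I would exploit the $\pm$ symmetry available at $h=h'=0$ together with an Aizenman--Higuchi-type argument adapted to the present FKG framework: an extremal Gibbs measure $\nu$ is tail-trivial, so by Burton--Keane combined with the non-coexistence step, at most one color percolates almost surely under $\nu$; the $\pm$ symmetry and the sandwich then identify $\nu$ with $\mu^+$ or $\mu^-$ accordingly. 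The hardest step will be to convert ``no percolation of one color'' into an equality of laws rather than merely a stochastic comparison, and I expect to do this by building a monotone coupling from the FKG structure of Theorem~\ref{thm:FKG} and the equivalences {\bf P1}--{\bf P5} of Theorem~\ref{thm:cluster-dichotomy}, mirroring the corresponding step for the random-cluster model in \cite{DumSidTas13}.
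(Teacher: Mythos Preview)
Your treatment of the first three bullets is essentially the paper's: monotone FKG limits along exhaustions give existence and translation invariance, maximality among Gibbs measures gives extremality, and Burton--Keane plus Zhang give non-coexistence. One small slip: in your non-coexistence paragraph you invoke Zhang directly, but to derive the contradiction from four alternating arms you also need uniqueness of each colored infinite cluster, which is where Burton--Keane enters; the paper makes this explicit.

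The real gap is in the ``moreover'' part. Your plan is to run an Aizenman--Higuchi type argument on an arbitrary extremal Gibbs measure $\nu$: show via Burton--Keane that at most one color percolates, then upgrade this to $\nu\in\{\mu^+,\mu^-\}$ by some monotone coupling. There are two problems. First, Burton--Keane needs translation invariance, and an extremal Gibbs measure is a priori only tail-trivial, not translation invariant; establishing translation invariance of all Gibbs measures is in fact one of the main outputs of the Aizenman--Higuchi theorem, not an input. Second, and more seriously, the paper points out explicitly that the original Aizenman and Higuchi proofs rely on Ising-specific features and do not transfer here. The obstruction is that the cluster representation lacks the \emph{strong} domain Markov property: the conditional law inside a circuit depends not only on the spins on the circuit but on the connectivities outside. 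This breaks the usual ``find a circuit on which $\omega\le\hat\omega$ and compare'' step.

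The paper's fix is to follow the geometric Georgii--Higuchi argument instead, but with a modified notion of $\le\,$circuit tailored to this model: a circuit $\calC$ is a $\le\,$circuit for $(\omega,\hat\omega)$ if exterior $\hat\omega$-minus connections between points of $\calC$ are matched by exterior $\omega$-minus connections, and exterior $\omega$-plus connections are matched by exterior $\hat\omega$-plus connections. This is exactly what is needed for Corollary~\ref{cor:CBC} to apply across $\calC$. The delicate case is the construction of such a circuit in the ``butterfly'' situation (Case~3 of \cite[Lemma~5.5]{GeoHig00} in both half-planes with opposite orientations), where one must take the boundary of the finite component cut out by the intersecting $\pm$ paths rather than an arbitrary subpath. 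None of this is captured by your sketch, and the step ``no percolation of one color implies $\nu=\mu^\pm$'' cannot be completed by the FKG sandwich and Theorem~\ref{thm:cluster-dichotomy} alone.
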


We remark that, since $r(\sigma)$ and $r'(\sigma)$ are anti-symmetric, the map $\sigma \mapsto -\sigma$ takes the measure $\mu_{G,n,x,h,h'}^\tau$ to $\mu_{G,n,x,-h,-h'}^{-\tau}$.
In particular, $h=h'=0$ is a  self-dual surface in the space of parameters.
Recall that $h$ can be interpreted as an external field favoring pluses over minuses.
Comparing the spin representation defined above to the well-known random-cluster model (also known as the FK-model), $h$ plays an analogous role as the parameter $p$ of the random-cluster model (more precisely, $e^h$ should be compared to $\tfrac{p}{1-p}$). Similarly, $+$ and $-$ boundary conditions correspond respectively to the wired and free boundary conditions of the random-cluster model.
For certain properties, the analogy is rather direct: one may use the suitably modified techniques of the random-cluster model --- the key point is to obtain the monotonicity properties of the spin representation (the FKG inequality and the comparison between boundary conditions stated above). However, in order to show for~$n\in[1,2]$ and~$x=x_c(n)$ the existence of macroscopic clusters of pluses in case of minus boundary conditions (\textbf{P5} of Theorem~\ref{thm:cluster-dichotomy}), we found it necessary to consider the specific properties of the loop~$O(n)$ model and develop the gluing technique (see Section~\ref{sec:theorem-big-loops}).

The next theorem shows that, within the $h'=0$ surface, the self-dual line $h=0$ is critical.

\begin{theorem}\label{thm:cluster-h-critical}
	For $n\ge1$ and $x\le\tfrac1{\sqrt n}$,
	\begin{itemize}[noitemsep]
		\item if $h>0$, $\mu_{n,x,h,0}^-[0\longleftrightarrow\infty]>0$.
		\item if $h<0$, there exists $c_h>0$ such that for all $v\in\bbT$,
		$$\mu_{n,x,h,0}^+[0\longleftrightarrow v]\le \exp[-c_h\,d(v,0)].$$
	\end{itemize}
\end{theorem}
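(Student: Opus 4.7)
My proof of Theorem~\ref{thm:cluster-h-critical} combines three ingredients already in place at this point in the paper: the strong FKG inequality for the cluster representation with external field (Proposition~\ref{prop:FKG}), the $\sigma\mapsto-\sigma$ symmetry of~\eqref{eq:cluster-measure-with-magnetization} which maps $\mu_{n,x,h,0}^{\tau}$ to $\mu_{n,x,-h,0}^{-\tau}$, and a sharpness-of-phase-transition argument run in the parameter $h$.

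Since the Radon--Nikodym factor $e^{h\,r(\sigma)}$ between measures at different values of $h$ is increasing in $\sigma$ for $h\ge 0$, FKG yields that $\mu_{G,n,x,h,0}^{\pm}$ is stochastically non-decreasing in $h$ in every finite volume $G$, and the monotonicity passes to the infinite-volume limits from Theorem~\ref{thm:cluster-infinite-volume}. Combined with the comparison between boundary conditions, the thresholds
\[
h_c^{\pm}(n,x):=\inf\bigl\{h\in\R:\mu_{n,x,h,0}^{\pm}[0\longleftrightarrow\infty]>0\bigr\}
\]
are well-defined and satisfy $h_c^{+}\le h_c^{-}$. For any $h<h_c^{+}$ I would then apply the OSSS/Menshikov scheme, exactly as adapted to the random-cluster model in~\cite{DumSidTas13}, to produce a differential inequality for the susceptibility and upgrade non-percolation to exponential decay $\mu_{n,x,h,0}^{+}[0\longleftrightarrow v]\le e^{-c_h\,d(0,v)}$. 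The required inputs---strong FKG, translation-ergodicity, and the differential identity writing $\partial_h\mu_{n,x,h,0}^{+}[A]$ as a covariance with $r(\sigma)$---are all supplied by Theorems~\ref{thm:FKG} and~\ref{thm:cluster-infinite-volume}.

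It remains to locate the critical point at $h=0$. By the $\sigma\mapsto-\sigma$ symmetry, plus-percolation under $\mu_{n,x,h,0}^{-}$ and minus-percolation under $\mu_{n,x,-h,0}^{+}$ have the same probability, so the corresponding percolation thresholds are opposite in sign. Zhang's planar uniqueness argument---valid thanks to the extremality of $\mu_{n,x,h,0}^{\pm}$ stated in Theorem~\ref{thm:cluster-infinite-volume}---rules out the coexistence of infinite plus- and minus-clusters under a single translation-invariant FKG measure. Combined with the monotonicity above, these two facts force $h_c^{+}=h_c^{-}=0$. The first bullet of the theorem then follows immediately from $h>0=h_c^{-}$, and the second is the output of the sharpness step applied to $h<0=h_c^{+}$.

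\textbf{Main obstacle.} The bottleneck is implementing the OSSS-type sharpness argument in this setting: the decision-tree bound must be carried out on the spin configuration rather than on edge variables, and the covariance identity $\partial_h\mu[A]=\operatorname{Cov}(A,r)$ must be combined with the FKG inequality to close the differential inequality in the presence of the non-local loop weight $n^{k(\sigma)}$. This should follow the blueprint of~\cite{DumSidTas13} once finite-energy and interface-duality estimates are re-verified for the enhanced spin-edge model; by contrast, the symmetry and Zhang steps are standard once the infinite-volume measures are known to be extremal.
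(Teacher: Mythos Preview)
Your overall strategy---sharpness in $h$ plus symmetry to locate the critical point---is indeed the paper's approach, but your identification $h_c^+=h_c^-=0$ has a genuine gap. From the $\sigma\mapsto-\sigma$ symmetry you correctly get that the threshold for plus-percolation under $\mu^-$ is minus the threshold for minus-percolation under $\mu^+$; combined with Zhang's non-coexistence this yields only $h_c^+ + h_c^- \ge 0$, and together with $h_c^+\le h_c^-$ one obtains $h_c^+\le 0\le h_c^-$ at best (after also using planar duality on $\bbT$ to see that exponential decay of plus forces minus to percolate). Nothing you have written excludes, say, $h_c^+=-1$ and $h_c^-=1$; this would simply mean $\mu^+\ne\mu^-$ on the whole interval $(-1,1)$. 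The ``two facts'' you invoke do not close this gap.

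The paper supplies two ingredients you are missing. First, to pin down $h_c\le 0$, it uses a direct self-duality crossing estimate at $h=0$: in a trapeze, the $\pm$ symmetry gives $\mu_{n,x}^+[\calV_k]+\mu_{n,x}^-[\calV_k]=1$, hence $\mu^+_{\Lambda_{2k},n,x,0,0}[0\longleftrightarrow\partial\Lambda_k]\ge\tfrac{1}{2k}$, which is incompatible with the exponential decay guaranteed by sharpness for $h<h_c$. Second, to obtain percolation under the \emph{minus} measure for $h>0$ (the first bullet), one cannot simply quote $h>h_c^-$; the paper instead uses that $\mu_{n,x,h,0}^+\ne\mu_{n,x,h,0}^-$ for at most countably many $h$ (a pressure-convexity argument, cf.\ \cite[Theorem~(4.63)]{Gri06}), picks $h'\in(0,h)$ with $\mu^+=\mu^-$, and transfers the percolation from $\mu^+$ to $\mu^-$ at $h'$ and then up to $h$ by monotonicity. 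You need at least one of these inputs; your proposal contains neither.

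A minor point: the OSSS sharpness argument you want is in \cite{DumRaoTas17}, not \cite{DumSidTas13} (the latter is the dichotomy paper and contains no decision-tree inequality).
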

This result is similar to the recent developments in the understanding of random-cluster models, for which the critical point was computed on the square lattice; see \cite{BefDum12,DumRaoTas17}.

\paragraph{Organization.} The paper is organized as follows. The next two sections describe the proofs of Theorems~\ref{thm:loop-dichotomy}--\ref{thm:cluster-h-critical}. The last section introduces parafermionic observables and presents the proof of Theorem~\ref{thm:loop-macroscopic}.

\subsection*{Acknowledgments.}
We are grateful to Ioan Manolescu for pointing out a mistake in the proof of an earlier version of Theorem~\ref{thm:cluster-infinite-volume}, which stated a characterization of all (as opposed to only periodic) Gibbs measures.

Research of H. D.-C. was funded by a IDEX Chair from Paris Saclay and by the NCCR SwissMap from the Swiss NSF. Research of A. G. was supported by the Swiss NSF grant P2GE2\_165093, and  partially supported by the European Research Council starting grant 678520 (LocalOrder); part of the work was conducted during the visits of A. G. to the University of Geneva and he is grateful for its hospitality. Research of R.P. was supported by Israeli Science Foundation grant 861/15 and the European Research Council starting grant 678520 (LocalOrder). Research of Y.S. was supported by Israeli Science Foundation grant 861/15, the European Research Council starting grant 678520 (LocalOrder), and the Adams Fellowship Program of the Israel Academy of Sciences and Humanities.

\section{FKG inequality and comparison between boundary conditions}

This section is devoted to monotonicity properties of the spin representation. Theorem~\ref{thm:FKG} follows directly from Proposition~\ref{prop:FKG} and Corollary~\ref{cor:CBC} below. We start by proving the {\em Fortuin-Kasteleyn-Ginibre lattice condition} which is known to imply~\eqref{eq:FKG-thm} by~\cite[Theorem (2.19)]{Gri06}. For~$\sigma,\sigma'\in \{-1,1\}^\bbT$, we define~$\sigma\vee \sigma'$ and $ \sigma\wedge \sigma'$ by
\begin{equation}
(\sigma\vee \sigma')(v) := \max \{\sigma(v), \sigma(v')\},
\quad
(\sigma\wedge \sigma') (v) := \min \{\sigma(v), \sigma(v')\},
\quad v\in\bbT.
\end{equation}

\begin{proposition}[FKG lattice condition]\label{prop:FKG}
Fix $(n,x,h,h')$ such that $n \ge 1$ and $nx^2 \le e^{-|h'|}$. Let~$B\subset\bbT$ be such that each two neighboring vertices in~$B$ have a common neighbor inside~$B$. Let $G\subset B$ be finite, and $\tau\in\{-1,1\}^B$.
Then, for every~$\sigma,\sigma' \in\{-1,1\}^B$ such that~$\sigma_{|B\setminus G} = \sigma'_{|B\setminus G}$,
\begin{equation}\label{eq:FKG}
\mu_{G,n,x,h,h'}^\tau[\sigma\vee \sigma'] \cdot \mu_{G,n,x,h,h'}^\tau[\sigma\wedge \sigma']\ge \mu_{G,n,x,h,h'}^\tau[\sigma] \cdot \mu_{G,n,x,h,h'}^\tau[\sigma'].
\end{equation}
\end{proposition}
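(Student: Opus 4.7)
The plan is to verify~\eqref{eq:FKG} by reducing to the mixed second difference on two-site perturbations. Writing $H(\sigma):=k(\sigma)\log n + e(\sigma)\log x + h\,r(\sigma) + h'r'(\sigma)$ for the log-weight, the lattice condition~\eqref{eq:FKG} is the supermodularity of $H$. By standard lattice theory on $\{-1,1\}^B$, this is equivalent to the mixed second-difference inequality
\[
\Delta H \;:=\; H(\sigma\vee\sigma')+H(\sigma\wedge\sigma')-H(\sigma)-H(\sigma') \;\geq\; 0
\]
for all $\sigma,\sigma'$ that differ at exactly two sites $u,v\in G$ and are incomparable there. Setting $\eta:=\sigma\wedge\sigma'$, one has $\eta_u=\eta_v=-1$, while $\sigma,\sigma',\sigma\vee\sigma'$ equal $\eta$ with the spins in $\{u\},\{v\},\{u,v\}$, respectively, reset to $+1$. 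Accordingly $\Delta H=\Delta k\,\log n + \Delta e\,\log x + h\Delta r + h'\Delta r'$, where $\Delta F$ denotes the analogous mixed second difference of any functional $F$.

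The next step is to compute $\Delta r$, $\Delta e$, $\Delta r'$ explicitly. Since $r$ is additive in the spins, $\Delta r=0$. Every edge incident to $\{u,v\}$ other than $\{u,v\}$ itself contributes $0$ to $\Delta e$ (its disagreement status flips exactly twice across the four configurations), while the edge $\{u,v\}$ (if present) contributes $-2$ since it is a disagreement edge only in $\sigma$ and $\sigma'$; hence $\Delta e=-2\cdot\1_{u\sim v}$. Similarly, only triangles containing both $u$ and $v$ can affect $\Delta r'$: if $u\sim v$, the two triangles through the common neighbors $a,b\in B$ (which exist in $B$ by the hypothesis on $B$) contribute $\tfrac{1}{2}\eta_a$ and $\tfrac{1}{2}\eta_b$, giving $\Delta r' = \tfrac{1}{2}(\eta_a+\eta_b)\cdot\1_{u\sim v}$.

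The crux of the proof is the combinatorial bound
\[
\Delta k \;\geq\; -\1_{u\sim v}.
\]
For $u\not\sim v$, the flips at $u$ and $v$ act on essentially independent local structures; a short case analysis based on whether $u,v$ lie in the same $-$-cluster of $\eta$ and on the overlap of their $+$-neighboring clusters yields $\Delta k\geq 0$. For $u\sim v$, a finite case analysis on $(\eta_a,\eta_b)$ together with the cluster memberships of $a,b$ in $\eta$ shows $\Delta k\in\{-1,0\}$, with the extremal value $\Delta k=-1$ attained precisely when $\eta_a=\eta_b=+1$ and $a,b$ lie in the same $+$-cluster of $\eta$ — in that case flipping either of $u,v$ alone absorbs it into the ambient $+$-cluster and leaves the other as a $-$-singleton, whereas flipping both absorbs both without creating any new component. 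This combinatorial step is the main obstacle, since $k$ depends non-locally on $\eta$ and one must track cluster merging and splitting throughout the four configurations; the hypothesis on $B$ enters here to ensure the relevant common neighbors of $u,v$ lie in $B$ and thus are visible to the cluster count.

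To conclude, when $u\not\sim v$ one has $\Delta H=\Delta k\,\log n\geq 0$ from $n\geq 1$. When $u\sim v$, using $x\leq 1$ (a consequence of $nx^2\leq e^{-|h'|}\leq 1$) and $\tfrac{1}{2}(\eta_a+\eta_b)\in\{-1,0,1\}$,
\[
\Delta H \;\geq\; -\log n \;+\; 2|\log x| \;+\; h'\cdot\tfrac{1}{2}(\eta_a+\eta_b) \;\geq\; -\log n \;+\; 2|\log x| \;-\; |h'| \;=\; \log\!\frac{1}{nx^2\,e^{|h'|}} \;\geq\; 0
\]
by the hypothesis $nx^2\leq e^{-|h'|}$, finishing the verification.
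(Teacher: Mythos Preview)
Your argument is correct and follows essentially the same route as the paper: reduce to two-site perturbations, compute $\Delta r=0$, $\Delta e=-2\cdot\1_{u\sim v}$, $\Delta r'=\tfrac12(\eta_a+\eta_b)\cdot\1_{u\sim v}$, bound $\Delta k\ge -\1_{u\sim v}$, and conclude from $n\ge 1$ and $nx^2\le e^{-|h'|}$. Two cosmetic inaccuracies do not affect validity: your ``precisely'' characterisation of $\Delta k=-1$ omits the symmetric case $\eta_a=\eta_b=-1$, and the claim $\Delta k\in\{-1,0\}$ need not hold (it can be positive); since you only use $\Delta k\ge -1$, the final inequality stands.
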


\begin{remark}
The previous proposition states the strong FKG inequality for the spin representation defined by~\eqref{eq:cluster-measure-with-magnetization} in the case $B=\bbT$. When extending the inequality to the case~$B\subset \bbT$, we slightly abuse notation by using~$\mu_{G,n,x,h,h'}^\tau(\sigma)$ for~$\sigma, \tau$ defined only on a subset~$B$ of~$\bbT$ containing~$G$. By this we mean that~$\mu_{G,n,x,h,h'}^\tau(\sigma)$ is defined by~\eqref{eq:cluster-measure-with-magnetization}, where~$k(\sigma)$, $e(\sigma)$, $r(\sigma)$ and~$r'(\sigma)$ are defined in the same way. This extension will be instrumental in Corollary~\ref{cor:CBC}, where we prove monotonicity in boundary conditions.
\end{remark}

\begin{proof}
By~\cite[Theorem (2.22)]{Gri06}, it is enough to show the inequality for any two configurations which differ in exactly two places i.e.,~that for any $\sigma\in \Sigma(G,\tau)$ and $u\ne v$ in $G$,
\begin{align}
\label{eq_FKG}
\mu_{G,n,x,h,h'}^\tau[\sigma^{++}] \cdot \mu_{G,n,x,h,h'}^\tau[\sigma^{--}] \ge \mu_{G,n,x,h,h'}^\tau[\sigma^{+-}] \cdot \mu_{G,n,x,h,h'}^\tau[\sigma^{-+}] ,
\end{align}
where $\sigma^{\eta\eta'}$ is the configuration coinciding with $\sigma$ except (possibly) at $u$ and $v$, and such that $\sigma_u^{\eta\eta'}= \eta$ and $\sigma_v^{\eta\eta'}= \eta'$.
Equivalently, one needs to prove that
\begin{equation}
\label{eq_FKG2}
(\log n)\Delta k +  (\log x)  \Delta e+ h \Delta r + h' \Delta r' \ge 0 ,
\end{equation}
where
\[ \Delta k := k(\sigma^{++}) + k(\sigma^{--}) - k(\sigma^{+-}) - k(\sigma^{-+}) ,\]
and $\Delta e$, $\Delta r$ and $\Delta r'$ are defined similarly.
Observe that $\Delta r=0$ so that we may drop this term in \eqref{eq_FKG2}.

Write $\Delta k = \Delta k^+ + \Delta k^-$, where $\Delta k^+$ and $\Delta k^-$ take into account the plus or minus connected components separately.
Clearly, only plus-clusters containing~$u$ or~$v$ or adjacent to one of these vertices contribute to~$\Delta k^+$. It is easy to see that each such cluster in~$\sigma^{+-}$ or~$\sigma^{-+}$ is also a cluster in~$\sigma^{--}$ as soon as it does not intersect~$\{u,v\}$. The number of plus-clusters intersecting~$\{u,v\}$ is equal to one in~$\sigma^{+-}$ and~$\sigma^{-+}$ and is at least one in~$\sigma^{++}$, whence~$\Delta k^+ \ge -1$. Moreover, $\Delta k^+ = -1$ only if there are no plus-clusters in~$\sigma^{--}$ that are adjacent to both~$u$ and~$v$, and if~$u$ and~$v$ are in the same plus-cluster of~$\sigma^{++}$. In other words, $\Delta k^+<0$ implies that $\Delta k^+ = -1$, $u$ and $v$ are adjacent, and common neighbors of $u$ and $v$ have spin $-1$. The analogous statement holds for $\Delta k^-$.

We now divide the study into three cases.
\begin{itemize}
\item Assume $u$ and $v$ are not neighbors. Then, $\Delta e=\Delta r'=0$ and $\Delta k^+,\Delta k^-\ge0$. The assumption that $n\ge1$ immediately implies \eqref{eq_FKG2}.

\item Assume $u$ and $v$ are neighbors and have two common neighbors with different spins.
Then, $\Delta r'=0$, $\Delta e=-2$ and $\Delta k \ge 0$. Since $n \ge 1$ and $nx^2 \le 1$, we get~\eqref{eq_FKG2}.

\item Assume $u$ and $v$ are neighbors and common neighbors of $u$ and $v$ have the same spin.
Then, $|\Delta r'|\le 1$, $\Delta e=-2$ and $\Delta k\ge -1$ (since either $\Delta k^+$ or $\Delta k^-$ is non-negative). Since $n \ge 1$ and $nx^2 \le e^{-|h'|}$, we get~\eqref{eq_FKG2}. \qedhere
\end{itemize}
\end{proof}

\begin{remark}
It is easy to see that the conditions $n \ge 1$ and $nx^2 \le e^{-|h'|}$ are necessary in order for the FKG lattice condition to hold for arbitrary $G\subset\bbT$.
\end{remark}

The following corollary will be important in the proof of Lemma~\ref{lem_cutting_path_at_boundary}. It compares the probabilities of the events that the spins of two sets $U$ and $V$ are equal to a certain value.

\begin{corollary}\label{cor:FKG-several-faces}
Fix $(n,x,h,h')$ such that $n \ge 1$ and $nx^2 \le e^{-|h'|}$.
Let $G\subset\bbT$ be finite and $\tau\in\{-1,1\}^\bbT$.
Then, for every~$\sigma,\sigma'\in \Sigma(G,\tau)$ and~$U,V\subset G$,
\begin{align}
\label{eq-FKG-several-faces}
\mu_{G,n,x,h,h'}^\tau[\sigma_{|U} =\sigma_{|V} = 1 ] \,\cdot \, &\mu_{G,n,x,h,h'}^\tau[\sigma_{|U} =\sigma_{|V} = -1 ] \\
&\ge \mu_{G,n,x,h,h'}^\tau[\sigma_{|U} = 1, \sigma_{|V} = -1 ]\cdot\mu_{G,n,x,h,h'}^\tau[\sigma_{|U} = -1, \sigma_{|V} = 1 ].
\end{align}
\end{corollary}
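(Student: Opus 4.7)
The plan is to reduce~\eqref{eq-FKG-several-faces} to a positive-correlation statement for the measure $\mu_{G,n,x,h,h'}^\tau$ conditioned on a well-chosen event. Introduce
\[
E := \bigl\{\sigma \in \Sigma(G,\tau) : \sigma_{|U} \text{ is constant and } \sigma_{|V} \text{ is constant}\bigr\},
\]
partitioned by the four events $E_{s,s'} := \{\sigma_{|U} = s,\, \sigma_{|V} = s'\}$, $s,s' \in \{-1,1\}$. Writing $a,b,c,d$ for the conditional probabilities of $E_{+,+}, E_{-,-}, E_{+,-}, E_{-,+}$ given $E$, the identity $(a+c)(a+d) = a(1-b) + cd$ shows that $ab \ge cd$ is equivalent to $a \ge (a+c)(a+d)$. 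In other words,~\eqref{eq-FKG-several-faces} is exactly the assertion that the increasing events $X_U := \{\sigma_{|U} = 1\}$ and $X_V := \{\sigma_{|V} = 1\}$ are positively correlated under the conditional measure $\mu_{G,n,x,h,h'}^\tau[\,\cdot \mid E\,]$.

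To establish this correlation I would first verify that $E$ is a sublattice of $\{-1,1\}^B$. For $\sigma, \sigma' \in E$, the restriction $(\sigma \vee \sigma')_{|U}$ is the pointwise maximum of two constant functions on $U$ and is therefore constant; the same is true on $V$ and for $\wedge$, so $\sigma \vee \sigma', \sigma \wedge \sigma' \in E$. Proposition~\ref{prop:FKG} applied to any such pair $\sigma, \sigma'$ (which automatically agree on $B \setminus G$ since both lie in $\Sigma(G,\tau)$) then gives
\[
\mu_{G,n,x,h,h'}^\tau[\sigma\vee\sigma']\, \mu_{G,n,x,h,h'}^\tau[\sigma\wedge\sigma'] \ge \mu_{G,n,x,h,h'}^\tau[\sigma]\, \mu_{G,n,x,h,h'}^\tau[\sigma'],
\]
and dividing by $\mu_{G,n,x,h,h'}^\tau[E]^2$ shows that the conditional measure on $E$ inherits the FKG lattice condition.

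Finally, the map $\sigma \mapsto (\sigma_{|U}, \sigma_{|V}, \sigma_{|G\setminus(U\cup V)})$ is an order-preserving bijection between $E$ and the product lattice $\{-1,1\} \times \{-1,1\} \times \{-1,1\}^{G\setminus(U\cup V)}$, intertwining the $\vee$ and $\wedge$ operations. The conditional measure, read through this identification as a measure on a product lattice, thus satisfies the FKG lattice condition, so by \cite[Theorem (2.19)]{Gri06} it is positively associated for increasing events; applied to $X_U$ and $X_V$, both increasing, this yields the desired inequality. The only care needed is to observe that under the conditioning $\sigma \in E$ the event $X_U^c$ collapses to $\{\sigma_{|U} = -1\}$ rather than the broader set of configurations with at least one $-1$ spin on $U$, so that the four cells $E_{s,s'}$ indeed match the four conditional possibilities $X_U^{\pm} \cap X_V^{\pm}$; no analytic input beyond Proposition~\ref{prop:FKG} is required.
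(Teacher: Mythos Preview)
Your proposal is correct and follows essentially the same route as the paper: condition on the sublattice $E=\{\sigma_{|U}\text{ constant},\ \sigma_{|V}\text{ constant}\}$, observe that the FKG lattice condition of Proposition~\ref{prop:FKG} passes to the conditioned measure, and then apply the FKG inequality to the increasing events $\{\sigma_{|U}=1\}$ and $\{\sigma_{|V}=1\}$. The paper's proof is terser (it calls the inheritance of the lattice condition ``trivial'' and leaves the algebraic rewriting implicit), while you spell out the sublattice check, the identity $ab\ge cd\iff a\ge(a+c)(a+d)$, and the identification with a product lattice; the only minor caveat is that your bijection onto $\{-1,1\}^2\times\{-1,1\}^{G\setminus(U\cup V)}$ tacitly assumes $U\cap V=\emptyset$, but when $U\cap V\neq\emptyset$ the right-hand side of~\eqref{eq-FKG-several-faces} vanishes and there is nothing to prove.
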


\begin{proof}
Trivially,~\eqref{eq:FKG} implies that the FKG lattice condition is satisfied also for the conditioned measure~$\nu:=\mu_{G,n,x,h,h'}^\tau[\, \cdot \mid \sigma_{|U}\equiv \text{const}, \sigma_{|V}\equiv \text{const}]$, and hence this measure satisfies the FKG inequality (see~\cite[Theorem (2.19)]{Gri06}), i.e., for any two increasing events~$A,B\subset \{-1,1\}^\bbT$,
\[
\nu[A\cap B] \ge \nu[A] \cdot \nu[B].
\]
Applying this inequality to~$A:=\{\sigma_{|U}=1\}$ and~$B:=\{\sigma_{|V}=1\}$, yields the inequality
\begin{align*}
\nu[\sigma_{|U} = \sigma_{|V} = 1] \ge \nu[\sigma_{|U} = 1] \cdot \nu[\sigma_{|V} = 1],
\end{align*}
which can be written in the form~\eqref{eq-FKG-several-faces}, where~$\mu_{G,n,x,h,h'}^\tau$ is replaced with~$\nu$. Removing the redundant condition finishes the proof.
\end{proof}

In order to treat boundary conditions, we recall the following {\em domain Markov property} (the proof is straightforward and therefore omitted).
For any $(n,x,h,h')$, any finite $H\subset G\subset \bbT$ and any~$\tau,\sigma\in\{-1,1\}^\bbT$,
$$
\mu_{G,n,x,h,h'}^\tau[\sigma \mid \sigma_{|\bbT\setminus H}=\tau_{|\bbT\setminus H}]=\mu_{H,n,x,h,h'}^{\tau}[\sigma].$$
\begin{remark}As a consequence of this property and the definition of the measure, the model satisfies the {\em finite energy property}:
for any $\tau\in\{-1,1\}^\bbT$ and $\sigma \in \Sigma(G,\tau)$, $\mu^\tau_{G,n,x,h,h'}[\sigma]\ge \ep^{|G|}$ for a constant $\ep>0$ depending only on $(n,x,h,h')$.
\end{remark}
Let us conclude this section by observing that the domain Markov property together with the FKG lattice condition imply the following {\em comparison between boundary conditions}.

\begin{corollary}[Comparison between boundary conditions]\label{cor:CBC}
Consider $G\subset\bbT$ finite and fix $(n,x,h,h')$ such that~$n\ge 1$ and~$nx^2\leq e^{-|h'|}$. For any increasing event $A$ and any $\tau\le \tau'$,
 $$\mu_{G,n,x,h,h'}^\tau[A]\le \mu_{G,n,x,h,h'}^{\tau'}[A].$$
\end{corollary}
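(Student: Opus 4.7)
The plan is to use the domain Markov property to realize both $\mu^\tau_{G,n,x,h,h'}$ and $\mu^{\tau'}_{G,n,x,h,h'}$ as conditional distributions of one common finite-volume measure, then apply the FKG inequality (itself a consequence of Proposition~\ref{prop:FKG}) to get monotonicity in the conditioning. I suppress the subscripts $n,x,h,h'$ throughout. By the definition of stochastic domination, it suffices to show $\mu^\tau_G(A)\le\mu^{\tau'}_G(A)$ for every increasing event $A\subset\{-1,1\}^G$.

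Assume first that the discrepancy set $D:=\{v\in\bbT:\tau_v\ne\tau'_v\}$ is finite. Pick a finite $B\supset G\cup D$ satisfying the neighbor property of Proposition~\ref{prop:FKG}. Since $\tau=\tau'$ outside $B$, the two measures $\mu^\tau_B$ and $\mu^{\tau'}_B$ coincide, and I call their common value $\nu$. The domain Markov property then yields
\[
\mu^\tau_G[\,\cdot\,]=\nu\bigl[\,\cdot\,\bigm|\,\sigma_{|B\setminus G}=\tau_{|B\setminus G}\bigr]\quad\text{and}\quad\mu^{\tau'}_G[\,\cdot\,]=\nu\bigl[\,\cdot\,\bigm|\,\sigma_{|B\setminus G}=\tau'_{|B\setminus G}\bigr].
\]
By Proposition~\ref{prop:FKG}, $\nu$ satisfies the FKG lattice condition on $\{-1,1\}^B$, and hence the FKG inequality (cf.\ \cite[Theorem 2.19]{Gri06}). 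The FKG lattice condition is preserved under conditioning on the value of a single coordinate (partition functions simply cancel when restricting to a subcube), so conditioning $\nu$ on any fixed values of $\sigma_w$ for $w\in B\setminus(G\cup\{v\})$ produces an FKG measure on $\{-1,1\}^{G\cup\{v\}}$; applying the FKG inequality to this conditional measure with the increasing event $\{\sigma_v=+1\}$ gives, for any increasing $A$ depending only on $\sigma_{|G}$ and any $\zeta\in\{-1,1\}^{B\setminus(G\cup\{v\})}$,
\[
\nu\bigl[A\bigm|\,\sigma_v=+1,\;\sigma_{|B\setminus(G\cup\{v\})}=\zeta\bigr]\ge\nu\bigl[A\bigm|\,\sigma_v=-1,\;\sigma_{|B\setminus(G\cup\{v\})}=\zeta\bigr].
\]
Iterating this single-coordinate flip along a monotone chain from $\tau_{|B\setminus G}$ up to $\tau'_{|B\setminus G}$ (a finite chain since $D$ is finite) yields $\mu^\tau_G(A)\le\mu^{\tau'}_G(A)$, settling the case of finite discrepancy.

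For a general $\tau\le\tau'$, I truncate by setting $\tau^{(N)}:=\tau$ on $\Lambda_N$ and $\tau^{(N)}:=\tau'$ outside; then $\tau^{(N)}\le\tau'$ with finite discrepancy $\Lambda_N\cap D$, so the previous step gives $\mu^{\tau^{(N)}}_G(A)\le\mu^{\tau'}_G(A)$ for every $N$. The corollary follows provided that $\mu^{\tau^{(N)}}_G\to\mu^\tau_G$ weakly on the finite space $\{-1,1\}^G$, and this is the main obstacle: the quantities $r(\sigma)$, $e(\sigma)$ and $r'(\sigma)$ in the weight~\eqref{eq:cluster-measure-with-magnetization} stabilize as soon as $\Lambda_N\supset G\cup\partial G$, but the cluster count $k(\sigma)$ depends on the global connectivity of the extension of $\sigma_{|G}$ by $\tau^{(N)}$ and need not be monotone in $N$. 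One exploits the pointwise monotonicity $\tau^{(N)}\searrow\tau$ together with the observation that any two vertices of $G\cup\partial G$ that are connected in the extension by $\tau$ are joined by a \emph{finite} path---hence this connection is preserved in $\sigma_{|G}\cup\tau^{(N)}_{|\bbT\setminus G}$ once $N$ is large enough---to conclude the pointwise convergence of the weights, so that the stochastic domination passes to the limit.
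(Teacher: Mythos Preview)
Your treatment of the finite-discrepancy case is correct and essentially matches the paper's argument: realize both measures as conditionings of a single larger-volume measure and use the FKG lattice condition (Proposition~\ref{prop:FKG}) to compare them. The paper does this in one stroke (condition on $\sigma_{|H}=\pm1$ and apply FKG), but your single-spin iteration is equivalent.

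The gap is in your passage to infinite discrepancy. The convergence $\mu^{\tau^{(N)}}_G\to\mu^\tau_G$ need not hold. Your argument shows that any connection between two vertices of $G\cup\partial G$ present in the $\tau$-extension survives in the $\tau^{(N)}$-extension for large $N$ (via a finite witnessing path). But you do not control the reverse direction: two vertices of $\partial G$ lying in \emph{distinct infinite} plus-clusters of $\tau$ may well be plus-connected in \emph{every} $\tau^{(N)}$, because each of those clusters reaches $\partial\Lambda_N$ and gets joined through the $\tau'$-region outside $\Lambda_N$. In that situation $k(\sigma^{(N)})\neq k(\sigma^{(\infty)})$ for all $N$, and crucially the discrepancy depends on $\sigma_{|G}$ (it disappears exactly when $\sigma_{|G}$ already connects the two boundary clusters through $G$), so it does not cancel in the normalizing ratio. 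Hence the weights do not converge and neither does the measure. The dual issue for minus-clusters is indeed handled by your finite-path observation, since $\tau^{(N)}\ge\tau$; but the plus-side obstruction is genuine.

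The paper sidesteps the limit entirely. Since $G$ is finite, only finitely many connectivity relations among vertices of $G\cup\partial G$ are relevant to $k(\sigma)$, and each relation that \emph{does} hold (in either $\tau$ or $\tau'$) is witnessed by a finite path. Collecting these paths yields a finite $B\supset G$ such that $k(\sigma)$ is unchanged when everything outside $B$ is discarded, for every $\sigma\in\Sigma(G,\tau)\cup\Sigma(G,\tau')$. One then works entirely with configurations on $B$, where the discrepancy $H:=\{v\in B\setminus G:\tau_v<\tau'_v\}$ is automatically finite, and your Step~1 argument (or the paper's direct FKG application to $\mu^\tau_{G\cup H}$) concludes. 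No approximation is needed.
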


\begin{proof}
There exists~$B\subset\bbT$ finite such that~$G\subset B$ and for any~$\sigma\in\Sigma(G,\tau)\cup\Sigma(G,\tau')$, the number $k(\sigma)$ is not changed by removing all hexagons outside~$B$. It is enough to prove the inequality for measures~$\mu_{G,n,x,h,h'}^\tau$ and~$\mu_{G,n,x,h,h'}^{\tau'}$ on configurations restricted to~$B$. As in Proposition~\ref{prop:FKG}, we abuse notation and keep denoting measures in the same way. Consider the finite set $H:=\{x\in B\setminus G:\tau_x<\tau'_x\}$. The domain Markov property implies that
\begin{align*}\mu_{G,n,x,h,h'}^\tau&=\mu_{G\cup H,n,x,h,h'}^\tau[\, \cdot \mid \sigma_{|H}=-1],\\
\mu_{G,n,x,h,h'}^{\tau'}&=\mu_{G\cup H,n,x,h,h'}^\tau[\, \cdot \mid \sigma_{|H}=1].\end{align*} As a consequence, the FKG inequality~\eqref{eq:FKG} applied to configurations restricted to the set~$B$ implies that
\begin{equation}
\mu_{G,n,x,h,h'}^\tau[A]\le \mu_{G\cup H,n,x,h,h'}^\tau[A]\le \mu_{G,n,x,h,h'}^{\tau'}[A].\tag*{\qedhere}
\end{equation}
\end{proof}

\section{Proofs of Theorems~\ref{thm:loop-dichotomy} and \ref{thm:cluster-dichotomy}--\ref{thm:cluster-h-critical}}
Now that we are in possession of the FKG inequality and the comparison between boundary conditions, the proofs of Theorems~\ref{thm:cluster-dichotomy}--\ref{thm:cluster-h-critical} follow standard paths already described in detail in the literature. For this reason, we only outline the arguments and give the relevant references.

\begin{proof}[Proof of Theorem~\ref{thm:cluster-infinite-volume}] We fix~$n,x,h,h'$ and omit them everywhere in the notation. The first two items are very simple consequences of the comparison between boundary conditions (Corollary~\ref{cor:CBC}) and the domain Markov property. In particular, proofs that are valid for the random-cluster model also apply here. We refer to Theorem~(4.19) and Corollary~(4.23) in~\cite{Gri06}. The extremality of $\mu^+$ and $\mu^-$ implies that these measures inherit the positive association property of their finite-volume counterparts $\mu^-_G$ and $\mu^+_G$.

Let us now turn to the third item. First, the measure is ergodic and satisfies the finite energy property. As a consequence, the Burton-Keane argument \cite{BurKea89} shows that the infinite connected component of pluses, when it exists, is unique (see \cite[Theorem (5.99)]{Gri06} for an exposition of the argument). Similarly, the infinite connected component of minuses, when it exists, is unique. Thus, there cannot be coexistence of an infinite connected component of pluses and an infinite connected component of minuses, since Zhang's construction \cite[Theorem (6.17)]{Gri06} would imply the existence of more than one infinite connected component of pluses.

As for the random-cluster model~\cite[Theorem (4.31)]{Gri06}, any weak limit of finite-volume measures which has at most one infinite cluster (of each sign) is a Gibbs measure. Thus, by what we have shown above, $\mu^+$ is a Gibbs measure.

Corollary~\ref{cor:CBC} implies that for any finite $G$ and $\tau\in\{-1,1\}^\bbT$, the measure $\mu^\tau_G$ is stochastically between $\mu^-_G$ and $\mu^+_G$. Thus if $\mu^- = \mu^+$ then the model has a unique infinite-volume limit and, in particular, a unique Gibbs measure.

It remains to consider the case that $\mu^- \neq \mu^+$ and prove that any periodic Gibbs measure is a mixture of these two measures. For the two-dimensional Ising model, the stronger statement that any (possibly non-periodic) Gibbs measure is a mixture of the plus and minus measures, was proven by Aizenman~\cite{Aiz80} and Higuchi~\cite{Hig81}. Both these proofs rely on particular properties of the Ising model and do not apply to our case. Instead, we adapt the later proof by Georgii--Higuchi~\cite{GeoHig00}, which is more geometric and can be extended to the context of dependent models on the triangular lattice. Specifically, we adapt the proofs of Lemma 2.1, Lemma 2.2, Lemma 3.1 and Corollary 3.2 of~\cite{GeoHig00} to our situation. Below, we use the notation of~\cite{GeoHig00}, replacing *-connectivity in $\bbZ^2$ with standard connectivity in $\bbT$.

The main difference between the spin representation and the Ising model is that the latter has the domain Markov property, which states that the distribution in finite volume with prescribed boundary values is completely determined by one layer of spins on the boundary of the volume.
The formula~\eqref{eq:cluster-measure-with-magnetization} shows that this is not the case for the spin representation, as the quantity $k(\sigma)$ (defined after~\eqref{eq:dilute-Potts-measure}) which appears there may depend on the boundary values beyond the first layer. Nevertheless, a partial Markov property is available for the spin representation which suffices in order to adapt the proofs of~\cite{GeoHig00}. For a finite $G\subset\T$ and $\tau\in\{-1,1\}^\T$, the finite-volume measure $\mu^\tau_G$ depends on $\tau$ only through its first layer of spins outside $G$ when that first layer can be partitioned into two \emph{connected} sets such that $\tau$ is constant on each of these sets. Indeed, it is straightforward that in this case $k(\sigma)$ does not depend on the spins in $\tau$ beyond the first layer.

Lemma 2.1 of~\cite{GeoHig00} states that any Gibbs measure $\mu\neq\mu^{-}$ gives positive probability to the event that an infinite $+1$ cluster exists. Its statement and proof apply to our situation verbatim, using the partial Markov property above. In particular, as we assumed that $\mu^-\neq\mu^+$, the lemma implies that samples from $\mu^+$ have an infinite $+1$ cluster and samples from $\mu^-$ have an infinite $-1$ cluster, almost surely. Consequently, as there is no coexistence of infinite clusters of both signs, the measures $\mu^+$ and $\mu^-$ are not invariant under the $T$ transformation (flipping of all signs). This is used in the proof of Lemma 3.1 of~\cite{GeoHig00}.

The statement of Lemma~2.2 needs to be modified as follows: Let $\omega$ be sampled from a Gibbs measure $\mu$, let $\pi$ be a half-plane in $\T$ and let $R$ be the reflection through the boundary of $\pi$ (so that~$\pi$ and $R(\pi)$ cover the entire plane and have a line of $\T$ in common). Suppose that for every finite $\Delta\subset\pi$ there is a finite, connected, $R$-invariant $G$ with $\Delta\subset G$ such that $\omega\equiv 1$ on the part of the external vertex boundary of $G$ which is in $\pi$. Then $\mu$ stochastically dominates $\mu\circ R\circ T$.

The proof is a modification of the argument in~\cite{GeoHig00}: First find, in a large $R$-invariant $\Lambda$, the maximal connected, $R$-invariant $G\subset\Lambda$ for which the assumption holds. Such a $G$ exists with probability close to $1$ when $\Lambda$ is large and we proceed on the event that it exists. Condition on $\omega$ outside of $G$ and note that the distribution of $\omega|_G$ equals $\mu^\omega_G$ by the maximality of $G$ (as its boundary can be explored from the outside). Thus, Corollary~\ref{cor:CBC} implies that the distribution of $\omega|_G$ stochastically dominates $\mu^{\omega'}_G$, where $\omega'$ coincides with $\omega$ on $\pi$ and equals $-1$ elsewhere.
Since the parts of the external vertex boundary of $G$ in $\pi$ and $R(\pi)\setminus\pi$ are necessarily connected (by the maximality of $G$), by the partial Markov property above, we have $\mu^{\omega'}_G=\mu^{\tau}_G$, where $\tau$ equals $+1$ on $\pi$ and $-1$ elsewhere. Consequently, $R(T(\omega))$ is stochastically dominated by $\mu^{R(T(\tau))}_G$. However, Corollary~\ref{cor:CBC} also implies that $\mu^{\tau}_G$ stochastically dominates $\mu^{R(T(\tau))}_G$.
The lemma follows by taking a sequence of $\Lambda_n$ exhausting $\T$.

Lemma~3.1 of~\cite{GeoHig00} states that samples from every Gibbs measure have, almost surely, an \emph{infinite butterfly}, i.e., a pair of conjugate half-planes which contain infinite clusters of the same sign. The statement and proof of the lemma apply verbatim to our situation, making use of the modified Lemma 2.2.

Corollary~3.2 of~\cite{GeoHig00} is what we need, proving that every periodic Gibbs measure is a mixture of~$\mu^-$ and~$\mu^+$. Again, its statement and proof apply to our situation verbatim, finishing the argument.
\end{proof}

We remark that the proofs leading to the full characterization of Gibbs measures in~\cite{GeoHig00} apply to our situation with the exception of~Lemma~5.5 there. Adapting the latter to our model seems more delicate due to our weaker domain Markov property.

\begin{proof}[Proof of Theorem~\ref{thm:cluster-dichotomy}] Again, the analogy with the random-cluster model suggests that the proofs of \cite{DumSidTas13} apply in our context. Indeed the choice of $n\ge1$ and $x\le \tfrac1{\sqrt n}$ implies that the associated spin representation  enjoys the FKG inequality and the comparison between boundary conditions. It is in fact the case that the proofs of~\cite{DumSidTas13} apply here, with additional simplifications: one does not need to work both with the square lattice and its dual, and one can focus on the triangular lattice solely (since the duality here is simply flipping the spins). For this reason, we do not write out the proof. In order to illustrate one of the aspects of the argument though, we define the notion of symmetric domain and state an important lemma used repeatedly in the proof of \cite{DumSidTas13}.

\begin{figure}
\begin{center}
\includegraphics[scale=0.8]{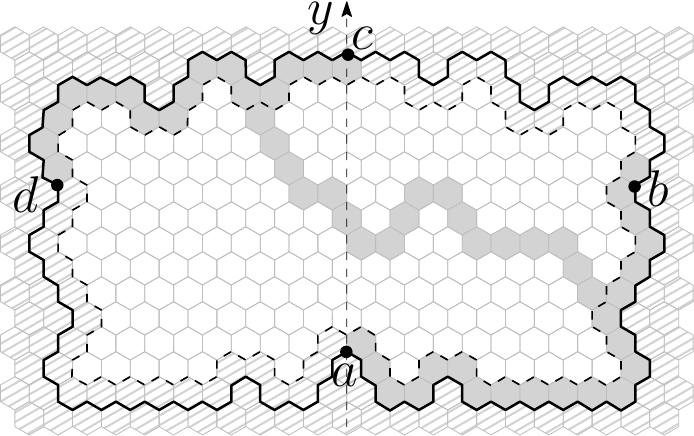}
\end{center}
\caption{A symmetric domain~$S$ (hexagons inside the dashed boundary) surrounded by a polygonal boundary~$P$ (bold boundary) with points~$a,b,c,d$ on it. The axis~$y$ is depicted in the middle. The boundary conditions are defined as follows: next to the arcs~$(ab)$ and~$(cd)$ the spins are~$1$ (marked with gray color) and the rest are~$-1$ (marked with dashed gray). Inside the domain the event of the crossing is depicted.}
\label{fig:domain_S}
\end{figure}

A {\em symmetric domain} $S$ (see Fig.~\ref{fig:domain_S}) is the collection of hexagons fully contained (all six edges) in the finite connected component of~$\bbH \setminus P$ for some self-avoiding polygon~$P$ in $\bbH$ which is symmetric with respect to the $y$-axis. Fix four points $a,b,c,d$ on $P$, with $b$ symmetric to $d$, and $a$ and $c$ the unique points on the $y$-axis. Define $(ab)$, $(bc)$, $(cd)$ and $(da)$ the arcs from $a$ to $b$, $b$ to $c$, $c$ to $d$ and $d$ to $a$ in $P$. Also, define the mixed boundary conditions to be made of pluses on hexagons bordering $(ab)$ or $(cd)$, and minuses everywhere else.
\begin{lemma}
Consider a symmetric domain $S$, then
\begin{equation}
\label{eq:sym-domains}
\mu_{S,n,x}^{\rm mix}[\exists \text{ a path of pluses from $(ab)$ to $(cd)$}]\ge\tfrac1{1+n}.
\end{equation}
\end{lemma}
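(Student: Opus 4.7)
The plan is to combine exact planar duality for site percolation on $\bbT$ with the reflection-and-sign-flip symmetry of the cluster representation, using the comparison between boundary conditions (Corollary~\ref{cor:CBC}) to absorb the handful of corner hexagons where the symmetry fails pointwise. The argument will in fact yield the stronger lower bound $\tfrac12$, which implies $\tfrac1{1+n}$ since $n\ge 1$.

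Define $E_+$ to be the event that $S$ contains a path of adjacent plus hexagons joining a hexagon bordering $(ab)$ to a hexagon bordering $(cd)$, and $E_-$ the analogous event for a minus-path from $(bc)$ to $(da)$. Site percolation on $\bbT$ is exactly self-dual: in any simply connected domain with four boundary arcs coloured alternately $+,-,+,-$, a standard topological argument shows that exactly one of the two opposite-colour crossings occurs. Applied to $S$ with the mixed boundary, this gives
\[
\mu_{S,n,x}^{\rm mix}[E_+]+\mu_{S,n,x}^{\rm mix}[E_-]=1.
\]

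Next, let $\phi:=F\circ R$, where $R$ is the reflection across the $y$-axis (which preserves $S$) and $F$ is the global sign-flip $\sigma\mapsto-\sigma$. Both $e$ and $k$ are invariant under $R$ and $F$, so $\phi$ preserves the un-normalised weight $n^{k(\sigma)}x^{e(\sigma)}$; moreover $\phi$ maps $\Sigma(S,\tau)$ bijectively onto $\Sigma(S,\phi(\tau))$ and sends $E_+$ onto $E_-$. Hence $\mu_{S,n,x}^{\rm mix}[E_+]=\mu_{S,n,x}^{\phi(\tau_{\rm mix})}[E_-]$. Now each hexagon adjacent to $S$ from outside shares an edge with $P$, and hence borders at least one arc. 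At hexagons bordering exactly one arc, $\tau_{\rm mix}$ and $\phi(\tau_{\rm mix})$ coincide, while at each of the (finitely many) corner hexagons bordering two different arcs one has $\tau_{\rm mix}=+1$ and $\phi(\tau_{\rm mix})=-1$. Since values of $\tau$ far from $S$ do not affect the measure, we may extend $\phi(\tau_{\rm mix})$ by $\tau_{\rm mix}$ there and obtain the global pointwise inequality $\phi(\tau_{\rm mix})\le \tau_{\rm mix}$; as $E_-$ is decreasing, Corollary~\ref{cor:CBC} yields $\mu_{S,n,x}^{\phi(\tau_{\rm mix})}[E_-]\ge \mu_{S,n,x}^{\rm mix}[E_-]$.

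Combining the three pieces gives $\mu_{S,n,x}^{\rm mix}[E_+]\ge\mu_{S,n,x}^{\rm mix}[E_-]$, and therefore $\mu_{S,n,x}^{\rm mix}[E_+]\ge\tfrac12\ge\tfrac1{1+n}$, as required. The main obstacle I foresee is the bookkeeping at the corner hexagons of $S$: one must fix conventions that simultaneously make the duality $E_+^c=E_-$ an exact equality inside $\Sigma(S,\tau_{\rm mix})$ and are respected by the reflection $R$, so that $\phi$ really does send $E_+$ bijectively onto $E_-$. Once this is settled, the weight-invariance of $\phi$ and the pointwise comparison $\phi(\tau_{\rm mix})\le\tau_{\rm mix}$ are routine.
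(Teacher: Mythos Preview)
Your argument contains a genuine gap at the step ``values of $\tau$ far from $S$ do not affect the measure''. This is false for the cluster representation when $n>1$: the weight $n^{k(\sigma)}$ counts connected components of $\sigma$ on all of $\bbT$ that meet $G$ or its neighbourhood, so the \emph{exterior connectivity} of the boundary hexagons enters the measure, not merely their spin values. Concretely, in $\tau_{\rm mix}$ the minus arcs $(bc)$ and $(da)$ are joined through the exterior sea of minuses while the plus arcs $(ab)$ and $(cd)$ are not; in $\phi(\tau_{\rm mix})$ the situation is reversed, the plus arcs being joined through the exterior sea of pluses. Replacing the far-away pluses of $\phi(\tau_{\rm mix})$ by minuses therefore changes $k(\sigma)$ by $\pm 1$ depending on whether $\sigma\in E_+$ or $\sigma\in E_-$, so it changes the measure and your pointwise comparison $\phi(\tau_{\rm mix})\le\tau_{\rm mix}$ is unavailable. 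Since the discrepancy goes in opposite directions on the two pieces, no single application of Corollary~\ref{cor:CBC} can repair this.

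This connectivity mismatch is exactly the mechanism behind the paper's bound: tracking it yields $\mu^{\rm mix}[E_-]\le n\,\mu^{\rm mix}[E_+]$ rather than $\mu^{\rm mix}[E_-]\le\mu^{\rm mix}[E_+]$, whence $\tfrac1{1+n}$ instead of $\tfrac12$. Your approach can be salvaged in two ways: either carry the factor $n$ through the comparison of $\mu^{\phi(\tau_{\rm mix})}$ and $\mu^{\rm mix}$ (reproducing the paper's argument), or, as the Remark after the lemma indicates, work from the outset with the measure restricted to $B=S\cup\partial S$ in the sense of Proposition~\ref{prop:FKG}, where there is no exterior and the $\phi$-symmetry is exact, giving the stronger bound $\tfrac12$.
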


\begin{proof}
The complement of the event that $(ab)$ is connected to $(cd)$ by a path of pluses is the event that $(bc)$ and $(da)$  are connected by a path of minuses. The symmetry between the pluses and minuses (note that the pluses may even have a slight advantage if there are hexagons of $(ab)$ or $(cd)$ intersecting the $y$-axis), and the fact that $(bc)$ and $(da)$ are in the same connected component of minuses outside of $S$ implies that the complement event has probability at most $n$ times the probability of our event. The proof follows readily.\end{proof}

Again, we highlight that this lemma is even more convenient than the corresponding claim in \cite{DumSidTas13}, since it does not involve the dual lattice. With this lemma at hand, the rest of the proof of \cite{DumSidTas13} is simple to adapt and we refer to the original article for details.
\end{proof}

\begin{remark}
Removing spins in all hexagons outside of~$P$ in the same way as in Proposition~\ref{prop:FKG} and a remark after it, one obtains~$1/2$ on the right-hand side of~\eqref{eq:sym-domains} using a complete symmetry of the pluses and minuses in the spin representation. We prefer keeping minus boundary conditions outside in order to be closer to the setup in~\cite{DumSidTas13}.
\end{remark}

We now show how to derive~Theorem~\ref{thm:loop-dichotomy} using Theorems~\ref{thm:cluster-infinite-volume} and~\ref{thm:cluster-dichotomy}.
Recall that $\Lambda_k$ is the ball of size $k$ around the origin, and denote $\partial\Lambda_k:=\Lambda_k\setminus\Lambda_{k-1}$.

\begin{proof}[Proof of Theorem~\ref{thm:loop-dichotomy}]
One simply defines $\bbP_{n,x}$ to be the pushforward of $\mu_{n,x}^+$ (or $\mu_{n,x}^-$) by the map $\sigma\mapsto \omega(\sigma)$. The convergence of finite-volume measures with empty boundary conditions follows directly from the corresponding statement for $\mu_{n,x}^+$.  The fact that configurations do not contain infinite paths follows from the fact that there is no coexistence of infinite connected components of pluses and minuses. Then, by~\cite[Lemma 3.7]{DumPelSam14}, $\bbP_{n,x}$ is a Gibbs measure, since it is obtained as a weak limit of finite-volume measures and is supported on configurations with no infinite paths.

We now show that~$\bbP_{n,x}$ is the unique periodic Gibbs measure for the loop~$O(n)$ model with edge-weight~$x$. Assume there exists a periodic Gibbs measure~$\bbP\neq\bbP_{n,x}$. Let $\omega$ be sampled from $\bbP$ and independently uniformly sample $S \in \{-1,1\}$. Let $\sigma$ be the unique spin configuration having $\omega(\sigma) = \omega$ and $\sigma_o=S$, where $o \in \bbT$ is the origin. Let $\mu$ be the law of $\sigma$. It is easy to see that~$\mu$ is a periodic Gibbs measure for the spin representation. Hence, Theorem~\ref{thm:cluster-infinite-volume} implies that $\mu$ can be written as a linear combination of~$\mu_{n,x}^+$ and~$\mu_{n,x}^-$. The pushforward of each of these measures is~$\bbP_{n,x}$. Thus, $\bbP=\bbP_{n,x}$, a contradiction.

In order to show the dichotomy, we use the alternative provided by Theorem~\ref{thm:cluster-dichotomy}. Fix $n\ge1$ and $x\le \tfrac1{\sqrt n}$. If none of the properties of Theorem~\ref{thm:cluster-dichotomy} are satisfied, then {\bf P4} is not satisfied and therefore there exists $c=c(n,x)>0$ such that
\[
\mu_{n,x}^-[a\longleftrightarrow\partial\Lambda_k(a)]\le \exp(-ck),
\]
for all $k\ge1$, where~$a\in\bbT$ and~$\Lambda_k(a)$ is the translation of~$\Lambda_k$ that maps~$0$ to~$a$. With the map $\omega\mapsto \sigma$, one easily sees that if the loop passing through a point~$a$ has diameter at least $k$, then there exists a path of pluses from one of the three hexagons bordering $a$, going to distance $k$ from $a$. Applying the previous displayed inequality to all points in~$\Lambda_k$, we obtain the first item of Theorem~\ref{thm:loop-dichotomy}.

If all the properties of Theorem~\ref{thm:cluster-dichotomy} are satisfied, we can prove that the second item of Theorem~\ref{thm:loop-dichotomy} is satisfied as follows. Fix $k$ and $\tau\in\{-1,1\}^\bbT$. Recall that $A_k$ is the set of edges of $\bbH$ belonging to a hexagon in $A'_k := \Lambda_{2k} \setminus \Lambda_k$. Set $B:=\Lambda_{3k/2}\setminus\Lambda_k$ and $B':=\Lambda_{2k}\setminus\Lambda_{3k/2}$. Let $\calE$ be the event that there exists a circuit of neighboring pluses in $B$ surrounding the origin. Similarly, let $\calF$ be the event that there exists a circuit of neighboring minuses in $B'$ surrounding the origin. Then, {\bf P5} (more precisely \eqref{eq:crossing} and the FKG inequality) implies that
$\mu_{B,n,x}^-[\calE]\ge c$
and
$\mu_{B',n,x}^+[\calF]\ge c.$
Then, conditioning on the values of the spins in~$B$ and using the domain Markov property, we obtain that
\begin{align*}
\mu_{A'_k,n,x}^\tau[\calE\cap \calF]
&= \sum_{\tau'\in \{-1,1\}^B}\1_{\tau'\in \calE}\cdot \mu_{A'_k,n,x}^\tau[\sigma_{|B} = \tau'] \cdot\mu_{A'_k,n,x}^\tau[\calF \, | \, \sigma_{|B} = \tau'] \\
&\ge \sum_{\tau'\in \{-1,1\}^B}\1_{\tau'\in \calE} \cdot \mu_{A'_k,n,x}^\tau[\sigma_{|B} = \tau']\cdot\mu_{B',n,x}^+[\calF]
\\&=  \mu_{B',n,x}^+[\calF]  \cdot \mu_{A'_k,n,x}^\tau[\calE] \ge c^2,
\end{align*}
where both inequalities are obtained using the comparison between the boundary conditions. Note that by writing~$\tau'\in \calE$ for~$\tau'\in \{-1,1\}^B$, we are slightly abusing the notation, since~$\calE$ is an event on~$\{-1,1\}^\bbH$. Nevertheless, as~$\calE$ is completely defined by the values of spins in~$B$, this does not lead to any ambiguity.

To conclude the proof, observe that on $\calE\cap \calF$, the configuration $\omega(\sigma)$ contains a loop which is contained in~$A_k$ and surrounds the origin, so that~\eqref{eq:RSW5} follows from Proposition~\ref{prop:bijection}. Finally, since {\bf P2} holds, the above argument showing uniqueness of the periodic Gibbs measure, implies uniqueness of the Gibbs measure (as every Gibbs measure for the spin representation lies stochastically between $\mu^-_{n,x}$ and $\mu^+_{n,x}$).
\end{proof}

\begin{proof}[Proof of Theorem~\ref{thm:cluster-h-critical}]
We may apply mutatis mutandis the existing arguments for showing that the critical point of random-cluster models on the square lattice is equal to the self-dual point. We even have several ways to proceed. Rather than using the original argument \cite{BefDum12}, we choose to use a recent short proof of this statement~\cite{DumRaoTas17}.

First, note that the choice of $n\ge1$ and $x\le \tfrac1{\sqrt n}$ guarantees that the associated spin representation satisfies the FKG lattice condition. Since it is also strictly positive by the finite energy property (each configuration in $\Sigma(G,\tau)$ has positive probability), we deduce by~\cite[Theorem (2.24)]{Gri06} that it is monotonic. A direct application of the result of \cite{DumRaoTas17} (with $e^h$ playing the role of $\tfrac p{1-p}$) thus implies the existence of $h_c\in\bbR$ such that
\begin{itemize}[noitemsep]
\item There exists $c>0$ such that for all $h\ge h_c$, $\mu^+_{n,x,h,0}[0\longleftrightarrow\infty]\ge c(h-h_c)$.
\item For $h<h_c$, there exists $c_h>0$ such that for any $k\ge1$,
$$\mu^+_{\Lambda_{2k},n,x,h,0}[0\longleftrightarrow \partial\Lambda_k]\le \exp(-c_hk).$$
\end{itemize}

We now prove that $h_c=0$ in two steps.
Consider the event $\calV_k$ that there exists a path of pluses in the trapeze $\{r+{\rm e}^{{\rm i}\pi/3}s:r,s\in\llbracket0,k\rrbracket\}$ from the top side to the bottom side. The complement of this event is the existence of a path of minuses from the left side to the right side so that, using the symmetry of the trapeze,
$$\mu_{n,x}^+[\calV_k]+\mu_{n,x}^-[\calV_k]=1.$$
 By the comparison between boundary conditions, we deduce that, for $h \ge 0$,
 $$\mu^+_{\Lambda_{2k},n,x,h,0}[0\longleftrightarrow \partial\Lambda_k]\ge \tfrac1k\cdot\mu_{n,x}^+[\calV_k]\ge \tfrac1{2k}.$$ This immediately implies that $h_c\le 0$ by item 2 above.

We now prove that $\mu_{n,x,h,0}^-[0\longleftrightarrow\infty]>0$ for any $h>h_c$. This property immediately implies that $h_c\ge0$, since otherwise there would be both infinite connected components of pluses and minuses for the measure $\mu_{n,x}^+$. To show that $\mu_{n,x,h,0}^-[0\leftrightarrow\infty]>0$, observe that the proof of \cite[Theorem~(4.63)]{Gri06} or \cite[Theorem~1.12]{Dum17a} applied to our context shows that for any fixed $n$ and~$x$, $\mu_{n,x,h,0}^+\ne\mu_{n,x,h,0}^-$ for at most countably many values of $h$. Therefore, there exists $h'\in(h_c,h)$ such that $\mu_{n,x,h',0}^+=\mu_{n,x,h',0}^-$ so that
\[
\mu_{n,x,h,0}^-[ 0\longleftrightarrow\infty]\ge \mu_{n,x,h',0}^-[ 0\longleftrightarrow\infty]=\mu_{n,x,h',0}^+[ 0\longleftrightarrow\infty]>0. \tag*{\qedhere}
\]
\end{proof}

\section{Proof of Theorem~\ref{thm:loop-macroscopic}}
\label{sec:theorem-big-loops}

The proof of Theorem~\ref{thm:loop-macroscopic} is a combination of several ingredients. We will work by contradiction, assuming that scenario~{\bf A1} of Theorem~\ref{thm:loop-dichotomy} is realized and all loops are small, and then proving that the probability of large loops is not exponentially small.
In order to do so, we will invoke so-called parafermionic observables to prove that weighted sums (defined below) of loop configurations with an additional path between two vertices on the boundary of a domain are not much smaller than weighted sums of loop configurations. Then, intuitively, the idea is to glue several domains together and combine these long paths into the large loop that we are looking for. The main problem here is that there can be loops exactly at the place of gluing. The solution is to use the fact that these loops are small by assumption, to condition on them, and, through the use of probabilistic estimates on relative weights of paths (see definition below), to show that long paths still exist with good probability and can be combined into a large loop. We start the proof by studying these relative weights in the next two sections.

In this section, we always assume that $n\ge1$ and $x\le \tfrac1{\sqrt n}$. We sometimes specify in addition that~$n\in [1,2]$ and that $x=x_c(n)$, which is always at most $\tfrac1{\sqrt n}$. To lighten the notation, we will drop $n$ and $x$ from the subscript in the measures or partition functions.

\subsection{Relative weight of a path}
\label{sec:rel-weight}

In this section, a finite subset of edges $\Omega$ of $\bbH$ is also seen as a subgraph of $\bbH$ with vertex-set given by the endpoints in $\Omega$. For a subset $A$ of vertices of $\Omega$, introduce the weighted sum $$Z_\Omega^A:=\sum_{\omega\in\calE(\Omega,A)}x^{|\omega|}n^{\ell(\omega)},$$
where $\calE(\Omega,A)$ is the set of subgraphs of $\Omega$ with even vertex degree for $v\notin A$ and vertex degree~1 for $v\in A$; as before, $|\omega|$ and~$\ell(\omega)$ denote the number of edges and loops in~$\omega$. Note that $Z^A_\Omega=0$ unless $|A|$ is even. When $A$ consists of two vertices $a$ and $b$, we write $Z^{a,b}_\Omega$ for $Z^{\{a,b\}}_\Omega$.
Define also the {\em relative weight} of a path $\gamma$ in $\Omega$ to be the following ratio:
\[
\wrel_\Omega(\gamma) = x^{|\gamma|}\cdot\frac{Z_{\Omega\setminus \gamma}^\emptyset}{Z_\Omega^\emptyset},
\]
where $\Omega \setminus \gamma$ is the subset of edges of $\bbH$ obtained from $\Omega$ by removing all the edges in $\gamma$ and the four additional edges incident to the endpoints of $\gamma$. We extend the above definition to the case when $\gamma$ is a subset of $\Omega$ consisting of disjoint paths, in which case $\Omega \setminus \gamma$ is obtained by removing all edges in $\gamma$ and the edges incident to the endpoints of the paths.

\begin{remark}
	When $n=1$ and vertices in $A$ are allowed to have degree 3, the sums and weights above are related via the Kramers--Wannier duality to spin correlations in the Ising model on $\bbH$. More precisely, the ratio of $Z_\Omega^A$ and $Z_\Omega^\emptyset$ is then simply the average of the random variable $\prod_{x\in A}\sigma_x$. In particular, it is always smaller than 1. The properties of $\wrel_\Omega(\gamma)$ are well-understood in this context, and are also related to the weights of the backbone in the random-current representation of the model \cite[page 353--355]{AizBarFer87}.
	In the following sections, we extend some of these properties to the regime $n\ge1$ and $n x^2\le 1$.
\end{remark}

Let us conclude this section by introducing notation. We write $\gamma:a\rightarrow b$ if $\gamma$ starts at $a$ and ends at $b$, and similarly, we write $\gamma: a \to B$ if $\gamma$ starts at $a$ and ends at some $b \in B$. We also write $\gamma\circ\eta$ for the concatenation of the paths $\gamma$ and $\eta$ (when $\eta$ starts at the end of $\gamma$). Note that by definition, the weights satisfy the {\em chain rule},
$$\wrel_{\Omega}(\gamma\circ\eta)=\wrel_\Omega(\gamma)\cdot\wrel_{\Omega\setminus\gamma}(\eta)=\wrel_{\Omega\setminus \eta}(\gamma)\cdot\wrel_\Omega(\eta).$$
Note also the simple relation for any vertices $a,b \in \Omega$:
\begin{equation}\label{eq:Z-and-relative-weights}
\frac{Z^{a,b}_{\Omega}}{Z^\emptyset_{\Omega}} = \sum_{\substack{\gamma\subset\Omega\\ \gamma:a\to b}}\wrel_\Omega(\gamma).
\end{equation}

\subsection{Probabilistic estimates on weights}

We will restrict ourselves to special subsets $\Omega$ of $\bbH$. We refer to Fig.~\ref{fig:triangle} for an illustration (there the case of a triangular domain is depicted). A subset $\Omega$ of edges of $\bbH$ is called a {\em domain} if there exists a self-avoiding polygon $P$ in $\bbH$ such that $\Omega$ is the set of edges with at least one endpoint in the finite connected component of $\bbH\setminus P$. Let $\partial\Omega$ be the set of vertices of $P$ neighboring a vertex in $\Omega$. Note that the vertices of $\partial\Omega$ are incident to exactly one edge of $\Omega$.

In the next two lemmas, we refer to sums of weights of configurations of the loop~$O(n)$ and its spin representation. We recall the notation and emphasize the difference: $Z_\Omega^A$ was defined in the previous subsection and refers to the loop~$O(n)$ model (note that it is different from~$Z_{\Omega,n,x}^\xi$ defined in the introduction), and~${\bf Z}_G^-$ refers to the spin representation and is defined by~\eqref{eq:dilute-Potts-measure}. We shall also use the notation~${\bf Z}^-_{G}[\cdot]:=\mu^-_{G,n,x}[\cdot]\cdot {\bf Z}^-_{G,n,x}$.

\begin{lemma}
	\label{lem_cutting_path_at_boundary}
	Fix $n\ge1$ and $x\le\tfrac1{\sqrt n}$. Then for any
	domain $\Omega$ and any $A \subset \partial\Omega$,
	$$\frac{Z^A_{\Omega}}{Z^\emptyset_{\Omega}} \le \frac{c_k}{n^{k/2}} \,,$$
	where~$k :=|A|/2$ and~$c_k := \tfrac{1}{k+1}{{2k}\choose{k}}$ is the $k$-th Catalan number.
\end{lemma}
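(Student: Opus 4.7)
The backbone of the proof will be a combinatorial decomposition of $\calE(\Omega,A)$, inductive reduction using the Catalan recursion, and finally a base case at $|A|=2$ that uses the cluster representation and FKG.

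\emph{Combinatorial decomposition.} Any $\omega \in \calE(\Omega, A)$ decomposes uniquely as $\omega = \gamma_1 \sqcup \cdots \sqcup \gamma_k \sqcup \omega_L$, where the $\gamma_i$ are pairwise vertex‑disjoint self‑avoiding paths pairing up the $2k$ points of $A$, and $\omega_L$ is a loop configuration disjoint from $\bigcup_i \gamma_i$. (Pairwise disjointness is forced because $\bbH$ has maximum degree $3$ and $\omega$ has degree $0$ or $2$ outside of $A$.) Since $A \subset \partial\Omega$ and $\Omega$ is a planar subgraph of $\bbH$, the induced pairing $\pi$ of $A$ is non‑crossing. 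Writing $A = \{a_1,\dots,a_{2k}\}$ in cyclic order on $\partial\Omega$, this gives
\[
Z^A_\Omega \;=\; \sum_{\pi \text{ non‑crossing}} Z^A_\Omega(\pi), \qquad
Z^A_\Omega(\pi) \;:=\; \sum_{\gamma_\bullet \text{ realizing } \pi} x^{\sum_i|\gamma_i|}\, Z^\emptyset_{\Omega\setminus \bigcup_i \gamma_i}.
\]
I will prove the per‑pairing bound $Z^A_\Omega(\pi)/Z^\emptyset_\Omega \le n^{-k/2}$; summing over the $c_k$ non‑crossing pairings then yields the lemma.

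\emph{Inductive step (domain splitting).} Fix $\pi$ and let $(a_1, a_{2m})$ be the pair of $\pi$ containing $a_1$ (the index is even by non‑crossing). The path $\gamma_1 : a_1 \to a_{2m}$ is a Jordan arc in the planar domain $\Omega$, so $\Omega \setminus \gamma_1 = \Omega^{(1)} \sqcup \Omega^{(2)}$ splits into two sub‑domains, containing the $k_1 = m-1$ and $k_2 = k-m$ remaining pairs of $\pi$ respectively. By disjointness,
\[
Z^\emptyset_{\Omega\setminus\gamma_1} \;=\; Z^\emptyset_{\Omega^{(1)}}\cdot Z^\emptyset_{\Omega^{(2)}}, \qquad Z^{A\setminus\{a_1,a_{2m}\}}_{\Omega\setminus\gamma_1}(\pi\setminus\{(a_1,a_{2m})\}) \;=\; Z^{A^{(1)}}_{\Omega^{(1)}}(\pi^{(1)}) \cdot Z^{A^{(2)}}_{\Omega^{(2)}}(\pi^{(2)}).
\]
Applying the inductive hypothesis in each sub‑domain, summing over paths $\gamma_1 : a_1 \to a_{2m}$, and using the Catalan recursion $c_k = \sum_{m=1}^{k} c_{m-1}\, c_{k-m}$ cleanly absorbs the combinatorics; the induction therefore reduces to the case $k=1$, i.e.\ $Z^{a,b}_\Omega / Z^\emptyset_\Omega \le n^{-1/2}$ for any $a,b \in \partial\Omega$.

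\emph{Base case $k=1$ (the main obstacle).} This is where the hypothesis $n\ge 1$ and $nx^2 \le 1$ must enter, and I expect it to be the technical heart of the proof. My strategy is to pass to the cluster representation of Proposition~\ref{prop:bijection}. The event that the edge $e_a$ at a boundary vertex $a$ belongs to the loop configuration $\omega(\sigma)$ translates, under $\mu^-_G$, to a condition on the spins of a specific boundary hexagon adjacent to $a$, so $Z^{a,b}_\Omega/Z^\emptyset_\Omega$ may be rewritten as a ratio involving probabilities of spin events under $\mu^-_G$. The factor $n^{-1/2}$ I would try to obtain by a Cauchy–Schwarz/doubling argument: pair $\omega_1,\omega_2 \in \calE(\Omega,\{a,b\})$, observe that their two paths from $a$ to $b$ concatenate into one additional loop (worth a factor $n$), and compare the resulting double sum to $(Z^\emptyset_\Omega)^2$ via an FKG‑type comparison $Z^\emptyset_{\Omega\setminus\gamma}\,Z^\emptyset_{\Omega\setminus\gamma'} \le Z^\emptyset_\Omega\, Z^\emptyset_{\Omega\setminus(\gamma\cup\gamma')}$ on disjoint paths. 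The constraint $nx^2\le 1$ should enter precisely in absorbing the diagonal $\gamma = \gamma'$ and non‑disjoint contributions; making this step tight is the part I expect to require the most care.
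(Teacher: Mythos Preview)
Your overall architecture---decompose $Z^A_\Omega$ over non-crossing pairings, peel off one path at a time using the chain rule/domain splitting, and reduce to the two-point case---is exactly what the paper does. The Catalan number enters for the same reason, and the inductive step is fine (the paper phrases it as an iterated application of~\eqref{eq:Z-and-relative-weights} and~\eqref{eq:cutting_path_at_boundary_two_points} rather than a formal induction, but it is the same argument).

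The gap is in your base case. Your Cauchy--Schwarz plan cannot get off the ground as written: since $a,b\in\partial\Omega$ each have exactly one incident edge in $\Omega$, \emph{every} path $\gamma:a\to b$ uses those two edges, so two such paths are never edge-disjoint. Your ``disjoint $\gamma,\gamma'$'' case is therefore empty, the concatenation $\gamma\cup\gamma'$ is never a simple loop, and the whole weight of the argument falls on the non-disjoint contributions you flagged as delicate but did not treat. There is no obvious repair along these lines.

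The paper's route for $k=1$ is cleaner and avoids this issue entirely. Let $(ab)$ and $(ba)$ be the two boundary arcs of hexagons determined by $a,b$. Under the cluster representation one has the four identities
\begin{align*}
Z_\Omega^\emptyset &= {\bf Z}^-_G[\sigma_{|(ab)}=-,\sigma_{|(ba)}=-],\\
x^{m}n\cdot Z_\Omega^{a,b} &= {\bf Z}^-_G[\sigma_{|(ab)}=+,\sigma_{|(ba)}=-],\\
x^{m'}n\cdot Z_\Omega^{a,b} &= {\bf Z}^-_G[\sigma_{|(ab)}=-,\sigma_{|(ba)}=+],\\
x^{m+m'}n\cdot Z_\Omega^\emptyset &= {\bf Z}^-_G[\sigma_{|(ab)}=+,\sigma_{|(ba)}=+],
\end{align*}
and Corollary~\ref{cor:FKG-several-faces} (FKG applied to $U=(ab)$, $V=(ba)$) gives
$(x^{m+m'}n\,Z_\Omega^\emptyset)\cdot Z_\Omega^\emptyset \ge (x^m n\,Z_\Omega^{a,b})\cdot(x^{m'} n\,Z_\Omega^{a,b})$,
whence $Z^{a,b}_\Omega/Z^\emptyset_\Omega\le n^{-1/2}$ with no combinatorics and no case analysis. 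The moral: rather than doubling the \emph{path}, double the \emph{boundary condition}---the fourth quantity $x^{m+m'}n\,Z_\Omega^\emptyset$ (all boundary spins flipped to $+$) is what makes the square root drop out.
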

\begin{proof}
Assume first that $k=1$ so that $A=\{a,b\}$ for some $a,b \in \partial \Omega$.
Let $P$ be the polygon defining the domain $\Omega$ and consider the set $G$ of hexagons having all their six edges in $\Omega\cup P$ (see Fig.~\ref{fig:domain_S}). Let $(ab)$ (resp.~$(ba)$) be the set of hexagons inside $P$ bordering the edges of $P$ contained in the arc between $a$ and $b$ when going counter-clockwise around $P$ (resp.~$b$ and~$a$).  Proposition~\ref{prop:bijection} describes a measure preserving bijection between the loop~$O(n)$ model and its spin representation. Moreover, the proof implies that the partition functions coincide, whence
\begin{align}Z_\Omega^\emptyset&={\bf Z}^-_{G}[\sigma_{|(ab)}=-,\sigma_{|(ba)}=-],\\
x^{m}n\cdot Z_\Omega^{a,b}&={\bf Z}^-_{G}[\sigma_{|(ab)}=+,\sigma_{|(ba)}=-],\\
x^{m'}n\cdot Z_\Omega^{a,b}&={\bf Z}^-_{G}[\sigma_{|(ab)}=-,\sigma_{|(ba)}=+],\\
x^{m+m'}n\cdot Z_\Omega^\emptyset&={\bf Z}^-_{G}[\sigma_{|(ab)}=+,\sigma_{|(ba)}=+],
\end{align}
where $m$ and $m'$ are the lengths of $P$-arcs between $a$ and $b$, and between $b$ and $a$. The additional~$x$ terms appear due to the fact that certain edges of~$P$ are separating hexagons bearing different spins and they are not counted in~$Z_\Omega^{a,b}$ and~$Z_\Omega^{\emptyset}$. The additional~$n$ terms appear because the exterior loop is not counted in~$Z_\Omega^{a,b}$ and~$Z_\Omega^{\emptyset}$.

Applying Corollary~\ref{cor:FKG-several-faces} for~$U=(ab)$ and~$V=(ba)$ gives
$$\mu_{G,n,x}^-[\sigma^{++}] \mu_{G,n,x}^-[\sigma^{--}] \ge \mu_{G,n,x}^-[\sigma^{+-}] \mu_{G,n,x}^-[\sigma^{-+}],$$
where $\sigma^{\eta\eta'}$ is the configuration coinciding with $\sigma$ except that it is equal to $\eta$ on $(ab)$ and $\eta'$ on $(ba)$.
Using the four displayed equalities above, we obtain
$$ (x^{m+m'}n\cdot Z_\Omega^\emptyset)\cdot (Z_\Omega^\emptyset) \ge (x^{m}n\cdot Z_\Omega^{a,b})\cdot (x^{m'}n\cdot Z_\Omega^{a,b}).$$
The term $x^{m+m'}n$ cancels out and we obtain
\begin{equation}\label{eq:cutting_path_at_boundary_two_points}
\frac{Z^{a,b}_{\Omega}}{Z^\emptyset_{\Omega}} \le \frac{1}{\sqrt{n}} .
\end{equation}

Assume now that $k \ge 2$.
Since $c_k$ counts the number of connectivity patterns on vertices of~$A$ induced by~$k$ (non-intersecting) paths linking them inside~$\Omega$, it suffices to show that, for any partition $\{a_1,b_1\}, \dots, \{a_k,b_k\}$ of $A$ arising from such a connectivity pattern,
\[ \sum_{\substack{\gamma_1,\dots,\gamma_k \subset \Omega\\\forall i~ \gamma_i \colon a_i \to b_i}}  \wrel_\Omega(\gamma_1 \cup \cdots \cup \gamma_k) \le \frac{1}{n^{k/2}} ,\]
where the sum is over collections $\{\gamma_1,\dots,\gamma_k\}$ of non-intersecting paths.
Yet, the chain rule gives
\[ \wrel_\Omega(\gamma_1 \cup \dots \cup \gamma_k) = \wrel_\Omega(\gamma_1) \cdot \wrel_{\Omega \setminus \gamma_1}(\gamma_2) \cdots \wrel_{\Omega \setminus (\gamma_1 \cup \cdots \cup \gamma_{k-1})}(\gamma_k) ,\]
so that the lemma follows by iteratively summing over $\gamma_k$ up to $\gamma_1$ and using~\eqref{eq:Z-and-relative-weights} and~\eqref{eq:cutting_path_at_boundary_two_points}, noting also that if $\Omega'\subset\Omega$ is obtained by removing a path from $\partial\Omega$ to itself, then each connected component of $\Omega'$ is also a domain.
\end{proof}

We now compare the relative weights of a path in different domains.
\begin{lemma}
\label{lem_monotonicity_domains}
Fix $n\ge1$ and $x\le\tfrac1{\sqrt n}$. Then for any two domains $\Omega\subset\Lambda$ and any path $\gamma\subset\Omega$,
$$\wrel_\Lambda(\gamma)\le 2\wrel_\Omega(\gamma).$$
Furthermore, if $\gamma$ starts and ends in $\partial\Omega\cap\partial\Lambda$, then $\wrel_\Lambda(\gamma)\le \wrel_\Omega(\gamma).$
\end{lemma}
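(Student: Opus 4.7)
The plan is to translate the ratio $\wrel_\Lambda(\gamma)/\wrel_\Omega(\gamma)$ into a comparison of cluster-representation probabilities and then combine the domain Markov property, the comparison between boundary conditions (Corollary~\ref{cor:CBC}), and the $\sigma\mapsto-\sigma$ symmetry. Writing $\gamma'_\Omega$ for the union of $\gamma$ with the non-$\gamma$ edges of $\Omega$ incident to the endpoints of $\gamma$ (and analogously $\gamma'_\Lambda$), Proposition~\ref{prop:bijection} yields
\[
\wrel_\Omega(\gamma)=x^{|\gamma|}\bbP_{\Omega,n,x}^\emptyset\bigl[\omega\cap\gamma'_\Omega=\emptyset\bigr]=x^{|\gamma|}\mu^-_{G_\Omega,n,x}[A_\Omega],
\]
where $G_\Omega$ is the associated hexagon set and $A_\Omega$ is the spin event that $\sigma_h=\sigma_{h'}$ for every pair $(h,h')$ of hexagons separated by an edge of $\gamma'_\Omega$. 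The same identity holds for $\Lambda$, and since $\gamma'_\Lambda\supseteq\gamma'_\Omega$ one has the event inclusion $A_\Lambda\subseteq A_\Omega$.

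The key combinatorial observation is that the hexagons touching $\gamma'_\Omega$ all lie in a single equivalence class $C$ under the relation ``match across an edge of $\gamma'_\Omega$'': at each interior vertex of $\gamma$ the two incident $\gamma$-edges cover two of the three pairs of surrounding hexagons, so all three are merged, and at an interior endpoint the two additional edges have the same effect. Thus $A_\Omega=A_+\sqcup A_-$ where $A_\pm:=\{\sigma|_C\equiv\pm\}$ are respectively increasing and decreasing.

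For the factor $2$ statement I would apply the domain Markov property to $\mu^-_{G_\Lambda}$ and condition on $\tau:=\sigma|_{G_\Lambda\setminus G_\Omega}$, so that $\mu^-_{G_\Lambda}[A_\Lambda]=\E[\mu^\tau_{G_\Omega}[A_\Lambda]]\le\E[\mu^\tau_{G_\Omega}[A_\Omega]]$. Applying Corollary~\ref{cor:CBC} to each piece gives $\mu^\tau_{G_\Omega}[A_+]\le\mu^+_{G_\Omega}[A_+]$ and $\mu^\tau_{G_\Omega}[A_-]\le\mu^-_{G_\Omega}[A_-]$; combining these with the global symmetry $\mu^+_{G_\Omega}[A_+]=\mu^-_{G_\Omega}[A_-]$ and summing yields
\[
\mu^-_{G_\Lambda}[A_\Lambda]\le 2\mu^-_{G_\Omega}[A_-]\le 2\mu^-_{G_\Omega}[A_\Omega],
\]
which is exactly $\wrel_\Lambda(\gamma)\le 2\wrel_\Omega(\gamma)$.

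For the second assertion ($a,b\in\partial\Omega\cap\partial\Lambda$), no additional edges are adjoined at either endpoint, so $\gamma'_\Lambda=\gamma'_\Omega=\gamma$ and $A_\Lambda=A_\Omega$. The principal difficulty is to trade the factor $2$ for a factor $1$; the factor $2$ in the argument above stems precisely from the two-way sign ambiguity for the class $C$, and we want to eliminate one of the two terms. In this boundary-to-boundary setting, $\gamma$ separates each of the two domains into two subdomains $\Omega=\Omega_1\sqcup\gamma\sqcup\Omega_2$ and $\Lambda=\Lambda_1\sqcup\gamma\sqcup\Lambda_2$ with $\Omega_i\subset\Lambda_i$, so that $Z^\emptyset_{\Omega\setminus\gamma}=Z^\emptyset_{\Omega_1}Z^\emptyset_{\Omega_2}$ and likewise for $\Lambda$, and the desired inequality becomes the super-multiplicative relation
\[
Z^\emptyset_\Omega\cdot Z^\emptyset_{\Lambda_1}\cdot Z^\emptyset_{\Lambda_2}\le Z^\emptyset_\Lambda\cdot Z^\emptyset_{\Omega_1}\cdot Z^\emptyset_{\Omega_2}.
\]
I would establish this either by a weight-preserving injection between the corresponding pairs of loop configurations --- the technical point being to repair parities at $\partial\Omega\setminus\partial\Lambda$ when restricting configurations from $\Lambda$ to $\Omega$ --- or, more likely, by sharpening the conditioning argument above: the hexagons of $C$ lying just inside $\partial\Omega\cap\partial\Lambda$ at $a$ and $b$ are adjacent to hexagons outside both $G_\Omega$ and $G_\Lambda$ (which are pinned to $-$ identically in both measures), so the local structure near the endpoints is the same in the two domains and should allow one to drop the $\mu^+_{G_\Omega}[A_+]$ contribution altogether, leaving only the bound $\mu^-_{G_\Lambda}[A]\le\mu^-_{G_\Omega}[A_-]\le\mu^-_{G_\Omega}[A]$ by the monotonicity for decreasing events. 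This final step is the main obstacle, and carrying it out rigorously is where I expect the bulk of the work to lie.
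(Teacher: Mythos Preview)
Your argument for the factor-$2$ inequality is correct and is essentially the paper's proof rephrased probabilistically: the paper works directly with partition functions and uses FKG for the conditional probability $\mu^-_{\Lambda^\bullet}[\sigma_{|T}=-\mid\sigma_{|S}=-]\ge\mu^-_{\Lambda^\bullet}[\sigma_{|T}=-]$, while you use the domain Markov property plus Corollary~\ref{cor:CBC}. The two are equivalent.

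For the boundary-to-boundary case you are overcomplicating things, and what you call ``the main obstacle'' is in fact a one-line observation. You note that the hexagons of $C$ near $a$ and $b$ are \emph{adjacent to} hexagons outside $G_\Lambda$; the point is that they are themselves outside $G_\Lambda$. Indeed, if $a\in\partial\Lambda$ then $a$ lies on the bounding polygon $P_\Lambda$, so each of the two hexagons bordering the terminal $\gamma$-edge at $a$ contains an edge of $P_\Lambda$ and hence does not have all six edges in $\Lambda$. Thus $C\not\subset G_\Lambda$, and under the minus boundary condition those hexagons are pinned to $-1$, so $\mu^-_{G_\Lambda}[A_+]=0$. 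This kills the unwanted term \emph{before} you condition on $\tau$: you get $\mu^-_{G_\Lambda}[A_\Lambda]=\mu^-_{G_\Lambda}[A_-]$, and then the domain Markov property and Corollary~\ref{cor:CBC} applied to the single decreasing event $A_-$ give
\[
\mu^-_{G_\Lambda}[A_-]=\E_\tau\bigl[\mu^\tau_{G_\Omega}[A_-]\bigr]\le\mu^-_{G_\Omega}[A_-]\le\mu^-_{G_\Omega}[A_\Omega],
\]
which is exactly $\wrel_\Lambda(\gamma)\le\wrel_\Omega(\gamma)$. This is precisely the paper's argument (``the spins in $S$ cannot be equal to $+1$ since $S$ is touching the boundary''). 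There is no need for a super-multiplicativity argument or a combinatorial injection.
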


\begin{proof}
We have
$$\frac{\wrel_{\Omega}(\gamma)}{\wrel_\Lambda(\gamma)}=\frac{Z^\emptyset_{\Omega\setminus\gamma}}{Z^\emptyset_{\Lambda\setminus\gamma}}\cdot \frac{Z^\emptyset_{\Lambda}}{Z^\emptyset_{\Omega}}.$$
Denote by~$\Omega^\bullet$ (resp.~$\Lambda^\bullet$) the set of hexagons fully contained in~$\Omega$ (resp.~$\Lambda$). Let $S$ be the set of hexagons having a vertex in common with $\gamma$, and denote $T:=\Lambda^\bullet\setminus\Omega^\bullet$. By Proposition~\ref{prop:bijection},
\begin{align*}
Z_\Lambda^\emptyset&={\bf Z}_{\Lambda^\bullet}^-,\\
Z_\Omega^\emptyset&={\bf Z}_{\Lambda^\bullet}^-[\sigma_{|T}=-],\\
Z_{\Omega\setminus\gamma}^\emptyset&={\bf Z}_{\Lambda^\bullet}^-[\sigma_{|T}=-,\sigma_{|S}=-]+{\bf Z}_{\Lambda^\bullet}^-[\sigma_{|T}=-,\sigma_{|S}=+]\ge {\bf Z}_{\Lambda^\bullet}^-[\sigma_{|T}=-,\sigma_{|S}=-].\end{align*}
Furthermore, the $\pm$ symmetry and the comparison between boundary conditions imply that $$\mu_{\Lambda^\bullet}^-[\sigma_{|S}=+]=\mu_{\Lambda^\bullet}^+[\sigma_{|S}=-]\le \mu_{\Lambda^\bullet}^-[\sigma_{|S}=-],$$ from which we deduce that
\begin{equation}\label{eq:ir}Z_{\Lambda\setminus\gamma}^\emptyset={\bf Z}_{\Lambda^\bullet}^-[\sigma_{|S}=-]+{\bf Z}_{\Lambda^\bullet}^-[\sigma_{|S}=+]\le 2{\bf Z}_{\Lambda^\bullet}^-[\sigma_{|S}=-].\end{equation}
 Overall, we have
$$\frac{Z^\emptyset_{\Omega\setminus\gamma}}{Z^\emptyset_{\Lambda\setminus\gamma}}\ge \tfrac12 \cdot \bbP_{\Lambda^\bullet}^-[\sigma_{|T}=- \mid \sigma_{|S}=-]\stackrel{\rm (FKG)}\ge \tfrac12 \cdot \bbP_{\Lambda^\bullet}^-[\sigma_{|T}=-]=\frac{Z_\Omega^\emptyset}{2Z_\Lambda^\emptyset}.$$
In the case where $\gamma$ starts and ends in $\partial\Omega\cap\partial\Omega'$, we have that
$Z_{\Lambda\setminus\gamma}^\emptyset={\bf Z}_{\Lambda^\bullet}^-[\sigma_{|S}=-]$ (the spins in $S$ cannot be equal to $+1$ since $S$ is touching the boundary), so that we do not lose the factor of $2$ in \eqref{eq:ir}.
\end{proof}

Let us mention an important (technical) consequence of the above lemmas (see Fig.~\ref{fig:cut_path}).
	
\begin{figure}
\begin{center}
\includegraphics[scale=0.8]{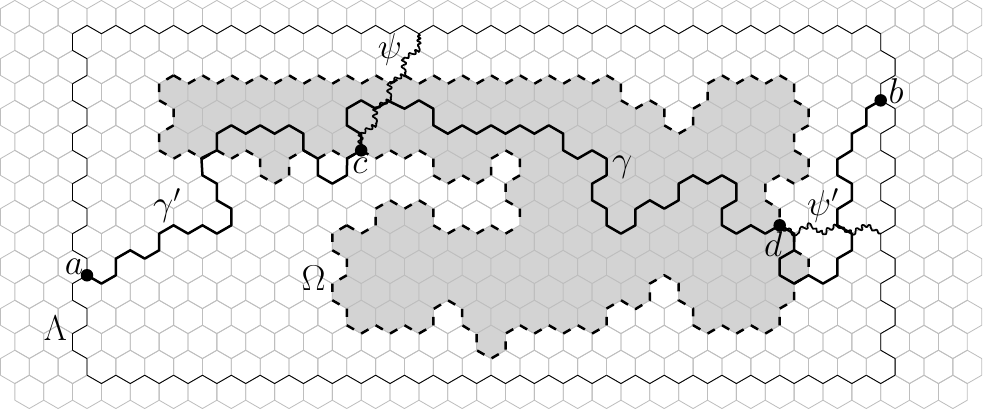}
\end{center}
\caption{The domain~$\Lambda$ and the subdomain~$\Omega$ (hexagons inside it are marked with gray). Points~$a,b$ are in~$\partial \Lambda$, points~$c,d$ are in~$\partial\Omega$, path~$\gamma\subset\Omega$ has endpoints~$c,d$, path~$\gamma'\subset\Lambda$ contains~$\gamma$ as a subpath and has endpoints~$a,b$. Path~$\gamma'$ can visit~$\Omega$ several times. Points~$c,d$ are connected to~$\partial\Lambda$ by paths~$\psi$ and~$\psi'$ (the wavy paths on the figure) of length at most~$k$. The paths~$\psi$ and~$\psi'$ can intersect~$\gamma$.
}
\label{fig:cut_path}
\end{figure}	
	
	\begin{corollary}
	\label{cor:cut}
Fix $n\ge1$ and $x\le\tfrac1{\sqrt n}$.  There exists a constant $C=C(n)>0$ such that the following holds. Consider two domains $\Omega\subset\Lambda$ with two boundary points $a,b\in\partial\Lambda$ and two points $c,d\in\partial\Omega$ at distance less than $k$ from $\partial\Lambda$ in $\Lambda$. Then for any path $\gamma$ in $\Omega$ from $c$ to $d$,
\[ \wrel_\Omega(\gamma) \ge e^{-Ck}
\sum_{\gamma'\in\Gamma'}\wrel_\Lambda(\gamma'),\]
where $\Gamma'$ is the set of paths in $\Lambda$ from $a$ to $b$ that contain $\gamma$ as a subpath.
\end{corollary}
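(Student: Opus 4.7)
The plan is to write each $\gamma'\in\Gamma'$ uniquely as $\gamma'=\beta\circ\gamma\circ\beta'$ with $\beta\colon a\to c$ and $\beta'\colon d\to b$ self-avoiding paths in $\Lambda$ that are vertex-disjoint from the interior of $\gamma$ and from each other. The chain rule (in its form for disjoint collections of paths) then gives
\[
\wrel_\Lambda(\gamma')=\wrel_\Lambda(\gamma)\cdot\wrel_{\Lambda\setminus\gamma}(\beta\cup\beta'),
\]
so that, after summing over $\gamma'\in\Gamma'$ and invoking Lemma~\ref{lem_monotonicity_domains} (which gives $\wrel_\Lambda(\gamma)\le 2\,\wrel_\Omega(\gamma)$), the corollary reduces to a uniform estimate of the form
\[
S:=\sum_{\beta,\beta'}\wrel_{\Lambda\setminus\gamma}(\beta\cup\beta')\le \tfrac12 e^{Ck}
\]
for a constant $C=C(n)>0$.

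To bound $S$, a four-endpoint extension of~\eqref{eq:Z-and-relative-weights} shows that $S\le Z^{\{a,b,c,d\}}_{\Lambda\setminus\gamma}\big/Z^\emptyset_{\Lambda\setminus\gamma}$. The vertices $a,b$ already lie on $\partial\Lambda$, but $c,d$ may lie in the interior, so Lemma~\ref{lem_cutting_path_at_boundary} does not apply as stated. To bring $c,d$ to the boundary, I would fix shortest paths $\psi,\psi'$ in $\Lambda$, of length at most $k$, joining $c,d$ to points $c',d'\in\partial\Lambda$, chosen to avoid the interior of $\gamma$ (possible since $c,d\in\partial\Omega$ admit neighbors in $\Lambda\setminus\Omega$, so one may first step outside $\Omega$). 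Setting $\tilde\gamma:=\psi^{-1}\circ\gamma\circ\psi'$, the path $\tilde\gamma$ is a self-avoiding path between two points of $\partial\Lambda$, and each connected component of $\Lambda\setminus\tilde\gamma$ is a genuine domain whose boundary contains the relevant points. Lemma~\ref{lem_cutting_path_at_boundary} applied component-by-component then yields
\[
\frac{Z^{\{a,b,c',d'\}}_{\Lambda\setminus\tilde\gamma}}{Z^\emptyset_{\Lambda\setminus\tilde\gamma}}\le \frac{c_2}{n}=\frac{2}{n}.
\]

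The key remaining step is the comparison
\[
\frac{Z^{\{a,b,c,d\}}_{\Lambda\setminus\gamma}}{Z^\emptyset_{\Lambda\setminus\gamma}}\le e^{Ck}\cdot\frac{Z^{\{a,b,c',d'\}}_{\Lambda\setminus\tilde\gamma}}{Z^\emptyset_{\Lambda\setminus\tilde\gamma}}.
\]
I would establish this by adjoining the edges of $\psi\cup\psi'$ to each configuration contributing to the numerator on the left --- this relocates the degree-one vertices from $\{c,d\}$ to $\{c',d'\}$ and absorbs a factor $x^{|\psi|+|\psi'|}\ge x^{2k}$ --- after first restricting to those configurations that do not already use any edge of $\psi\cup\psi'$. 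Via Proposition~\ref{prop:bijection}, the restriction corresponds to prescribing the spins of the $O(k)$ hexagons adjacent to $\psi\cup\psi'$ in the cluster representation, an event of probability at least $e^{-O(k)}$ by the finite energy property. A symmetric argument, in the spirit of the proof of Lemma~\ref{lem_monotonicity_domains}, controls the ratio of the $Z^\emptyset$ denominators by $e^{O(k)}$ through FKG and the plus/minus symmetry.

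The main obstacle is the careful selection of $\psi,\psi'$: they must stay inside $\Lambda$, avoid $\gamma$, and leave each component of $\Lambda\setminus\tilde\gamma$ a genuine domain (as required by Lemma~\ref{lem_cutting_path_at_boundary}), while simultaneously allowing the exponential factor $e^{Ck}$ to be bounded with a single $C=C(n)$ independent of the configuration of $(\Omega,\Lambda,\gamma,a,b,c,d)$. Once this geometric setup is handled, the rest is an exercise in matching the degree-constrained weights on either side of the extension, controlled by the FKG and cluster-representation tools developed in the previous section.
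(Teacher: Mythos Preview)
Your overall architecture matches the paper's: factor $\gamma$ out via the chain rule, use Lemma~\ref{lem_monotonicity_domains} to pass from $\wrel_\Lambda(\gamma)$ to $\wrel_\Omega(\gamma)$, and bound the remaining sum over the two connecting arcs by bringing $c,d$ to $\partial\Lambda$ along short paths $\psi,\psi'$ and invoking Lemma~\ref{lem_cutting_path_at_boundary}. The divergence, and the gap, is in how you handle the interaction of configurations with $\psi\cup\psi'$.

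Your plan is to \emph{restrict} to configurations avoiding $\psi\cup\psi'$ and to control the cost of this restriction by finite energy in the cluster representation. This step is not justified: Proposition~\ref{prop:bijection} gives a correspondence between \emph{loop} configurations and spin configurations, but the weighted sum $Z^{\{a,b,c,d\}}_{\Lambda\setminus\gamma}$ is over configurations with four prescribed degree-one vertices, i.e.\ configurations containing two paths. There is no spin picture for such objects, so ``prescribing the spins of the $O(k)$ hexagons adjacent to $\psi\cup\psi'$'' has no meaning here. Worse, the restriction can genuinely fail: a short $\psi$ from $c$ to $\partial\Lambda$ may topologically separate $a$ (or $b$) from $c$ inside $\Lambda\setminus\gamma$, in which case \emph{every} connecting arc $\beta$ must use an edge of $\psi$ and the restricted sum is zero. (Your side claim that one can ``first step outside $\Omega$'' is also incorrect: a vertex of $\partial\Omega$ has its three neighbours on $P$ and inside $\Omega$, none strictly outside.)

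The paper avoids restriction altogether. It allows $\psi,\psi'$ to intersect $\gamma$ and the configuration, and instead \emph{decomposes} each $\omega$ contributing to the right-hand side as $\omega_1:=\omega\setminus(\gamma\cup\psi\cup\psi')$ and $\omega_2:=\omega\cap(\psi\cup\psi')$. The set $A$ of degree-one vertices of $\omega_1$ then lies in $\{a,b\}\cup V$ with $|V|\le 2k+2$, and one sums over the $2^{O(k)}$ choices of $\omega_2$ and the $2^{O(k)}$ choices of $A$, applying Lemma~\ref{lem_cutting_path_at_boundary} in its full generality (arbitrary even $|A|$, with the Catalan bound $c_{|A|/2}/n^{|A|/4}$) to $Z^A_{\Lambda\setminus(\gamma\cup\psi\cup\psi')}$. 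This is precisely the device that replaces your finite-energy step, and it is where the general-$A$ form of Lemma~\ref{lem_cutting_path_at_boundary}, which your outline only uses for $|A|=4$, becomes essential.
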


\begin{proof}
Observe that the right-hand side of the inequality can be expressed as a sum over configurations in $\calE' := \bigcup_{\gamma' \in \Gamma'} \calE(\Lambda \setminus \gamma', \{a,b\})$.
Fix two paths $\psi$ and $\psi'$ in $\Lambda$ of length less than $k$, going from $\partial\Lambda$ to $c$ and $d$ respectively.
For $\omega \in \calE'$, define $\omega_1 := \omega \setminus (\gamma \cup \psi \cup \psi')$ and $\omega_2 := \omega \cap (\psi \cup \psi')$, and let $A$ be the set of degree 1 vertices in $\omega_1$ so that $\omega_1 \in \calE(\Lambda \setminus (\gamma \cup \psi \cup \psi'), A)$. Note that $A \subset \{a,b\} \cup V$, where $V$ is the set of endpoints of edges of $\omega_2$ in $\psi \cup \psi'$. Observe that $\Lambda \setminus (\gamma \cup \psi \cup \psi')$ is a union of domains with disjoint boundaries. Note also that $|V| \le 2k+2$ and that $\ell(\omega) \le \ell(\omega_1) + 2k$.
Since $\omega = \omega_1 \cup \omega_2 \cup \gamma$ for $\omega \in \calE'$, the map $\omega \mapsto (\omega_1,\omega_2)$ is injective on $\calE'$. Thus, summing over the choices of $\omega_1$, $\omega_2$ and $A$, and using Lemma~\ref{lem_cutting_path_at_boundary}, we obtain

\begin{align}
\sum_{\gamma'\in \Gamma'}\wrel_\Lambda(\gamma') = \frac{1}{Z^\emptyset_\Lambda} \sum_{\omega \in \calE'} x^{|\omega|} n^{\ell(\omega)}
 &\le \frac{x^{|\gamma|}}{Z^\emptyset_\Lambda} \cdot n^{2k} \cdot \sum_{\substack{A \subset \{a,b\} \cup V\\\omega_1 \in \calE(\Lambda \setminus (\gamma \cup \psi \cup \psi'), A)}} x^{|\omega_1|} n^{\ell(\omega_1)} \cdot \sum_{\omega_2 \subset \psi \cup \psi'} x^{|\omega_2|} \\
 &\le \frac{x^{|\gamma|}}{Z^\emptyset_\Lambda} \cdot n^{2k} (1+x)^{2k} \cdot \sum_{A \subset \{a,b\} \cup V} Z^A_{\Lambda \setminus (\gamma \cup \psi \cup \psi')} \\
 &\le \frac{x^{|\gamma|}}{Z^\emptyset_\Lambda} \cdot (2n)^{2k}\cdot \sum_{\ell = 0}^{k+2} \binom{2k+4}{2\ell} \frac{c_{\ell}}{n^{\ell/2}}\cdot Z_{\Lambda\setminus(\psi\cup\gamma\cup\psi')}^\emptyset\\
&\le (2n)^{2k}\cdot c_{k+2} \cdot 2^{2k+4}\cdot\wrel_{\Lambda}(\gamma),
\end{align}
where, in the last inequality, we used that $Z_{\Lambda\setminus\gamma}^\emptyset\ge Z_{\Lambda\setminus(\psi\cup\gamma\cup\psi')}^\emptyset$ to obtain the term $\wrel_\Lambda(\gamma)$.
We conclude the proof by noting that $\wrel_\Lambda(\gamma)\le 2\wrel_\Omega(\gamma)$ by Lemma~\ref{lem_monotonicity_domains} and that all the constant terms above are bounded by $\exp[O(k)]$.
\end{proof}

\subsection{The input from the parafermionic observable}

Fix $k$ even. Consider the equilateral triangular domain $\bbT_k$ of side length $k$ (see Fig.~\ref{fig:triangle}) defined as the set of edges of $\bbH$ with {\em at least one} endpoint in the subset
$\{0< y <\sqrt{3}(\tfrac{k}{2}-|x - \frac{k}{2}|)\}$ of~$\mathbb R^2$.
Let $\mathsf{B}_k$, $\mathsf{L}_k$ and $\mathsf{R}_k$ be the bottom, left and right parts of $\partial \bbT_k$.  Also, let $a$ be the point of cartesian coordinates $(\tfrac{k+1}{2},-\tfrac12)$ (it is in the middle of $\mathsf B_k$).

\begin{proposition}\label{prop_long_paths}
Fix $n \in [1,2]$ and $x=x_c(n)$. Then, for any even integer $k\ge1$,
$$\sum_{\substack{\gamma \subset \bbT_k\\ \gamma:a\to \mathsf{L}_k } } \wrel_{\bbT_k}(\gamma) \ge x^2.$$
\end{proposition}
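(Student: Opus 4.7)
The plan is to invoke Smirnov's parafermionic observable on $\bbT_k$, following the general strategy developed in \cite{DumSmi12} for the self-avoiding walk. For each midpoint $z$ of an edge $e$ of $\bbH$ inside $\bbT_k$, I would define
\[
F(z) := \sum_{\gamma: a \to z} \wrel_{\bbT_k}(\gamma) \cdot e^{-i\sigma W(\gamma)},
\]
where the sum runs over paths $\gamma \subset \bbT_k$ from $a$ to $z$, $W(\gamma)$ is the total winding of $\gamma$, and $\sigma = \sigma(n) \in (0,1)$ is the Smirnov spin tuned so that, at $x = x_c(n)$, the discrete contour integral of $F$ around every hexagonal face vanishes: for every interior face $h$,
\[
\sum_{z \in \partial h} F(z) \cdot \nu_z^{-2\sigma} = 0,
\]
with $\nu_z$ a unit complex number encoding the direction of $e$. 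This is exactly the standard computation that fixes the value of $x_c(n)$ in terms of $\sigma$.

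Next I would telescope these face relations over all interior faces of $\bbT_k$. Interior edges are shared by two adjacent faces with opposite orientations and cancel pairwise, leaving a sum over boundary edges together with a source term localised at $a$. The key geometric input is that if $z$ lies on a side $\square \in \{\mathsf{B}_k, \mathsf{L}_k, \mathsf{R}_k\}$, then any path $\gamma: a \to z$ must arrive along the unique edge of $\bbT_k$ incident to the boundary vertex at $z$, and hence $W(\gamma) \pmod{2\pi}$ is determined by $\square$ alone. Combining this with the factor $\nu_z^{-2\sigma}$, one checks that $F(z) \cdot \nu_z^{-2\sigma} = \beta_\square T_\square(z)$, where $T_\square(z) := \sum_{\gamma: a \to z} \wrel_{\bbT_k}(\gamma) \geq 0$ and $\beta_\square$ is a unit-modulus phase depending only on $\square$. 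Summing, the boundary identity collapses to the scalar relation
\[
\beta_B T_B + \beta_L T_L + \beta_R T_R = C_0,
\]
where $T_\square := \sum_{z \in \square} T_\square(z)$ and $C_0$ is the source contribution coming from the trivial short path at $a$.

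Finally, I would exploit the reflection symmetry of $\bbT_k$ about the vertical axis through $a$: this symmetry swaps $\mathsf{L}_k \leftrightarrow \mathsf{R}_k$ and leaves $\mathsf{B}_k$ invariant, so $T_L = T_R = \sum_{\gamma: a \to \mathsf{L}_k} \wrel_{\bbT_k}(\gamma)$, which is the quantity of interest. Taking an appropriate component of the identity (chosen, after a suitable global phase rotation, so that $\beta_B$ becomes real and the $T_B$-term is eliminated), one obtains an expression of the form $2 \,\mathrm{Im}(\beta_L) \cdot T_L = \mathrm{Im}(C_0)$, from which the bound $T_L \geq x^2$ follows after substituting the explicit values. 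The main obstacle, I expect, will be the bookkeeping on the boundary: one must verify that the triangle geometry (sides separated by angles of $\pi/3$) and the specific value of $\sigma(n)$ at the dilute critical line conspire to give $\mathrm{Im}(\beta_L) \neq 0$ throughout $n \in [1,2]$, and identify the source term $C_0$ as accounting for exactly the weight $x^2$ of the shortest initial path leaving $a$. These two computations together are what turn the \emph{a priori} abstract conservation law into the concrete inequality of the proposition.
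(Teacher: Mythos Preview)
Your overall framework is right --- parafermionic observable, vertex/face conservation law, telescope to a boundary identity, use the reflection symmetry of $\bbT_k$ across the axis through $a$ --- and this is exactly what the paper does. The gap is in your final extraction step.

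You propose to eliminate $T_B$ by taking an imaginary part after a global rotation. This does not work. Once you have used the reflection symmetry (which you do, to get $T_L=T_R$), the entire boundary identity becomes \emph{real}: the $\mathsf L_k$ and $\mathsf R_k$ contributions combine into $\tfrac{2}{x}\cos\!\big((2+\sigma)\tfrac{\pi}{3}\big)\,T_L$, the $\mathsf B_k\setminus\{a\}$ contributions (windings $\pm\pi$ on the two halves, paired by symmetry) combine into $\tfrac{1}{x}\cos(\sigma\pi)\,T_B'$, and the source term at $z_0$ is the empty walk, contributing $F(z_0)=1$. Taking an imaginary part yields $0=0$, not an inequality for $T_L$. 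The $T_B$ term cannot be \emph{eliminated} algebraically; it must be \emph{dropped with a sign}. The actual mechanism in the paper is:
\[
-\tfrac{1}{x^2}\,T_L \;+\; 1 \;+\; \tfrac{1}{x}\cos(\sigma\pi)\,T_B' \;=\;0,
\]
using $2\cos\!\big((2+\sigma)\tfrac{\pi}{3}\big)=-\tfrac{1}{x_c(n)}$, and then the observation that for $n\in[1,2]$ one has $\sigma(n)=1-\tfrac{3}{4\pi}\arccos(-n/2)\in[\tfrac14,\tfrac12]$, so $\cos(\sigma\pi)\ge 0$ and the $T_B'$ term is non-negative. Discarding it gives $T_L\ge x^2$. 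This sign condition on $\cos(\sigma\pi)$ --- not a non-vanishing imaginary part --- is where the restriction $n\ge 1$ enters.

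Two smaller corrections: the source term $C_0$ is not ``the weight $x^2$ of the shortest path leaving $a$'' but rather the empty walk at $z_0$ contributing $1$; the factor $x^2$ arises on the other side from the coefficient $-1/x^2$ in front of $T_L$. And on the hexagonal lattice the relevant local relation is per \emph{vertex} (three incident half-edges), not per face; this affects how the telescoping works but not the final boundary identity.
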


\begin{figure}
\begin{center}
\includegraphics[scale=0.8]{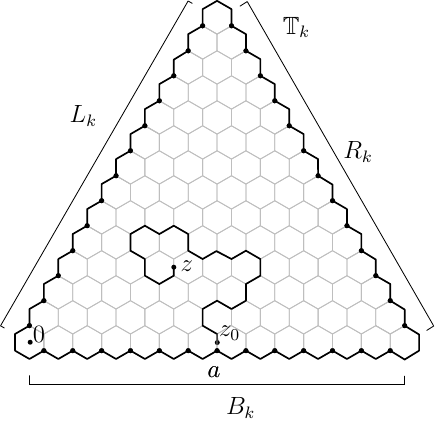}
\end{center}
\caption{Domain~$\Omega=\bbT_k$: the polygon~$P$ around it is in bold, the edges of~$\bbT_k$ are all those that lie inside~$P$ (note that the edges of $P$ are not in $\Omega$), boundary vertices~$\partial\bbT_k$ are marked with bullets, three different sides of~$\partial\bbT_k$ are denoted by~$\mathsf{B}_k$, $\mathsf{L}_k$, $\mathsf{R}_k$, each of them contains~$k$ hexagons, where~$k$ is an even number. The origin is located at the center of the leftmost hexagon on the bottom side, vertex~$a$ is in the middle of~$B_k$ and~$z_0$ is the midpoint of the edge in $\bbT_k$ emanating from~$a$. The path depicted on the picture is starting at~$z_0$ and ending at the midpoint~$z$ of an edge inside~$\bbT_k$ (as in the definition of the parafermionic observable). Furthermore, it has winding~$2\pi$ at $z$.}
\label{fig:triangle}
\end{figure}

\begin{proof}In order to prove this statement, we use the parafermionic observable. Set
$$\sigma = \sigma(n):=1 - \tfrac{3}{4\pi}\arccos(-n/2).$$
For this proof only, the paths $\gamma$ will be considered as going from the center $z_0$ of an edge to the center $z$ of another edge. Define $\Gamma_{z}=\Gamma_{z}(\Omega,z_0)$ for the set of paths in $\Omega$ from $z_0$ to $z$. For any $\gamma\in \Gamma_{z}$, $\wrel_\Omega(\gamma)$ is computed as in the case where $z_0$ and $z$ are vertices, and the notion of length~$|\gamma|$ is naturally extended by making the starting and ending half-edges contribute~$\frac{1}{2}$ instead of~1.

 Given a domain $\Omega$ and a center $z_0$ of an edge incident to $\partial\Omega$, define the parafermionic observable for any center $z$ of an edge in $\Omega$ as follows:
\[
F(z) :=\sum_{\gamma\in \Gamma_{z}} e^{-i\sigma\wind(\gamma)}\wrel_\Omega(\gamma),
\]
where $\wind(\gamma)$ is the total rotation when traversing $\gamma$ from~$z_0$ to~$z$.

It is by now classical (see \cite[Lemma 4]{Smi10a}) that $F$ satisfies the following relations when $x=x_c(n)$:
for the centers $p,q,r$ of the three edges incident to a vertex $v\in \Omega\setminus\partial\Omega$,
\begin{equation}
\label{eq_CR}
(p-v)F(p)+(q-v)F(q)+(r-v)F(r)=0,
\end{equation}
where $p-v$, $q-v$ or $r-v$ are seen as complex numbers.

We now focus on the domain $\bbT_k$ and $z_0=(\tfrac{k+1}2,0)$ (which is therefore the center of the edge of $\bbT_k$ incident to $a$). Summing the previous relation over all vertices $v\in \bbT_k\setminus\partial\bbT_k$, we find that the contributions of each inner edge to the relations around its endpoints cancel each other out, whence
\begin{equation}\label{eq:eq}
e^{-2\pi i/3}\sum_{z\in \overline{\mathsf L}_k} F(z)+e^{2\pi i/3}\sum_{z\in \overline{\mathsf R}_k}F(z)+\sum_{z\in \overline{\mathsf B}_k}F(z)=0,
\end{equation}
where $\overline{\mathsf L}_k$ (resp.~$\overline{\mathsf R}_k$ and $\overline{\mathsf B}_k$) denotes the set of centers of edges with one endpoint in $L_k$ (resp.~$R_k$ and $B_k$).

Now, if $z\in \overline{\mathsf L}_k\cup \overline{\mathsf R}_k\cup \overline{\mathsf B}_k$ then the observable can be computed simply using the observation that the winding of paths going from $z_0$ to $z$ is constant, i.e., does not depend on the path. More precisely, if $b$ is the vertex of $\partial\bbT_k$ associated to $z$ (recall that $a$ is associated to $z_0$), we obtain
\[
F(z)=\tfrac1{x}\cdot e^{-i\sigma w(z)}\sum_{\substack{\gamma\subset\bbT_k\\ \gamma:a\rightarrow b}}\wrel_{\bbT_k}(\gamma),
\]
where $w(z)$ is equal to $\pi/3$ on $\overline{\mathsf L}_k$, $-\pi/3$ on $\overline{\mathsf R}_k$, and $\pm\pi$ on $\overline{\mathsf B}_k$ depending on whether $z$ is on the left or right of $z_0$. Note that the term $\tfrac1x$ comes from the two missing half-edges necessary to complete $\gamma$ into a path from $a$ to $b$.
In particular, we obtain that
\[
e^{-2\pi i/3}\sum_{z\in \overline{\mathsf L}_k} F(z)+e^{2\pi i/3}\sum_{z\in \overline{\mathsf R}_k}F(z)= \frac1{x}\cdot2\cos((2+\sigma)\tfrac\pi3)\sum_{\substack{\gamma\subset \bbT_k\\ \gamma:a\rightarrow \mathsf L_k}}\wrel_{\bbT_k}(\gamma)=-\frac1{x^2}\sum_{\substack{\gamma\subset \bbT_k\\ \gamma:a\rightarrow \mathsf L_k}}\wrel_{\bbT_k}(\gamma),
\]
where we used that~$-\cos((2+\sigma)\tfrac\pi3) = \cos ((1-\sigma)\tfrac{\pi}{3}) = \tfrac{\sqrt{2+\sqrt{2-n}}}{2} = \tfrac1{2x_c} = \tfrac1{2x}$.

Since the empty walk is the only possible path from $z_0$ to $z_0$, we find $F(z_0)=1$. This, together with~$\sigma\le 1/2$, implies that
\[
\sum_{\substack{z\in \overline{\mathsf B}_k}}F(z)=F(z_0)+\tfrac1{x}\cdot\cos(\sigma\pi)\cdot  \hspace{-5mm}\sum_{\substack{\gamma\subset \bbT_k\\ \gamma:a\rightarrow \mathsf B_k\setminus\{a\}}}\wrel_{\bbT_k}(\gamma)\ge 1.
\]
Plugging this inequality and the previous displayed equation in \eqref{eq:eq} completes the proof.
\end{proof}

\subsection{Wrapping up the proof}
Fix $n\in[1,2]$ and $x=x_c(n)\le \tfrac1{\sqrt n}$. For convenience, we will write $Z_\Omega^A[\calE]$  for the weighted sum over configurations in $\calE\subset\calE(\Omega,A)$. Fix a large even integer $k$ and define
$$r:=\frac{k}{\log k}\qquad\text{and}\qquad\ell:=(\log k)^2.$$
We remark that the precise values of $r$ and $k$ are not important, we just need that $k/r$, $r/\ell$ and $\ell/\log k$ are sufficiently large.
For $1 \le s<k$, set $\bbT_{k,s}$ to be the domain $\bbT_{k-s}$ translated so that it is centered in the middle of $\bbT_k$.

Proposition~\ref{prop_long_paths} implies that
\begin{equation}\label{eq:ab}
\sum_{\substack{\gamma \subset \bbT_k\\ \gamma:a\to  \mathsf L_k} } \wrel_{\bbT_k}(\gamma) \ge x^2.
\end{equation}
We split the proof into two cases: either the paths $\gamma$ staying in $\bbT_k\setminus \bbT_{k,r}$ contribute at least half to the above sum, or the paths $\gamma$ intersecting $\bbT_{k,r}$ do. We will show that both of these cases are impossible when $k$ is large. We start with the case that the paths intersecting $\bbT_{k,r}$ contribute substantially, since this is from our point of view the most conceptual part of the argument.

\paragraph{Case 1.} Assume that (see Fig.~\ref{fig:case1})
\[
\sum_{\substack{\gamma \subset \bbT_k\\ \gamma:a\to  \mathsf L_k\\ \gamma\cap\bbT_{k,r}\ne\emptyset} } \wrel_{\bbT_k}(\gamma) \ge \frac{x^2}{2}\,.
\]
Since any path $\gamma'$ in $\bbT_k$ from $a$ to $\mathsf L_k$ intersecting $\bbT_{k,r}$ contains a subpath included in $\bbT_{k,\ell}$ also intersecting $\bbT_{k,r}$, there must exist $b\in\mathsf L_k$ and $c,d\in \partial \bbT_{k,\ell}$ satisfying
\begin{equation}\label{eq:abab}
\sum_{\substack{\gamma' \in\Gamma'_{bcd}}} \wrel_{\bbT_k}(\gamma') \ge \frac{x^2}{18k^3},
\end{equation}
where $\Gamma'_{bcd}$ is the set of paths $\gamma'$ in $\bbT_k$ from $a$ to $b$ containing a subpath in $\bbT_{k,\ell}$ from $c$ to $d$ intersecting $\bbT_{k,r}$. Note that we used that there are less than $k$ possibilities for $b$ and less than $3k$ possibilities for each of $c$ and $d$. In what follows, it will only be important whether~$c$ and~$d$ are on the same part or on different parts of~$\partial \bbT_{k,\ell}$. Using symmetry, we may assume that $c$ is on the bottom and that $d$ on the bottom or the left of $\partial\bbT_{k,\ell}$.

\begin{figure}
\centering
	\begin{subfigure}[t]{.48\textwidth}
		\includegraphics[scale=1.2,page=1]{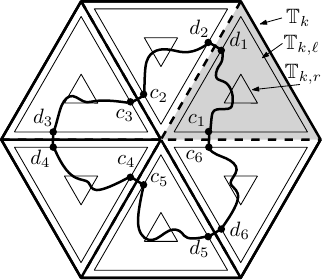}
		\caption{Point $c$ is on the bottom side of~$\bbT_k^\ell$ and~$d$ is on the left side of~$\bbT_k^\ell$.}
		\label{fig:case1-1}
	\end{subfigure}
	\begin{subfigure}{10pt}
		\quad
	\end{subfigure}
	\begin{subfigure}[t]{.48\textwidth}
		\includegraphics[scale=1.2,page=2]{case1.pdf}
		\caption{Both~$c$ and~$d$ are on the bottom side of~$\bbT_k^\ell$.}
		\label{fig:case1-2}
	\end{subfigure}
\caption{Case 1 of the proof. The triangle~$\bbT_k=\bbT_k^1$ is colored in gray. The triangles on both pictures represent~$\bbT_k^j$ (big triangles), $\bbT_{k,\ell}^j$ (middle triangles), $\bbT_{k,r}^j$ (smallest triangles). The set~$S$ is shown by dashed lines. Paths from~$c_j$ to~$d_j$ in~$\bbT_{k,\ell}^j$ and crossing~$\bbT_{k,r}^j$ have a big relative weight. Points of the set~$A=\{c_1,d_1,\dots,c_6,d_6\}$ are paired by short paths. The way vertices are paired by $\tau$ depends on the location of points~$c=c_1$ and~$d=d_1$.}
\label{fig:case1}
\end{figure}

Set $\bbT_k^1 = \bbT_k$, $c_1=c$ and $d_1=d$. Also, define $\bbT_k^{j+1}$, $c_{j+1}$ and $d_{j+1}$ to be the reflections of~$\bbT_k^j$, $c_j$ and $d_j$ with respect to $e^{j\pi i/3}\bbR$. Denote $\Lambda'_k := \bigcup_{j=1}^6 \bbT_k^j$ (this is the domain induced by the polygon surrounding $\Lambda_k$) and $A:=\{c_1,d_1,\dots,c_6,d_6\}$. We define~$\bbT_{k,s}^j$ similarly for $s \ge 1$ (in particular, for $s=r,\ell$). Let~$S$ be the set of edges of $\bbH$ belonging to the hexagons intersecting $\bbR\cup e^{i\pi/3}\bbR\cup e^{2i\pi/3}\bbR$.

For~$\omega\in\calE(\Lambda'_k,\emptyset)$, define~$\partial\omega$ to be the union of all loops of~$\omega$ that intersect~$S$. Let $\calE$ be the set of $\omega\in\calE(\Lambda'_k,\emptyset)$ that contain only loops of diameter less than~$\ell$. We will later use that the probability of $\calE$ is close to one if {\bf A1} of Theorem~\ref{thm:loop-dichotomy} holds.
Note that $\omega \setminus \partial \omega \subset \bbT_{k,1}^1 \cup \cdots \cup \bbT_{k,1}^6$ for all $\omega \in\calE(\Lambda'_k,\emptyset)$, and $\partial \omega \cap (\bbT_{k,\ell}^1 \cup \cdots \cup \bbT_{k,\ell}^6) = \emptyset$ for $\omega \in \calE$.
Now, for~$j=1,\dots,6$, let~$\mathrm{Int}^j(\omega)$ denote the connected component of the set~$\bbT_{k,1}^j \setminus \partial\omega$ that contains~$\bbT_{k,\ell}^j$. Note that by the definition of~$\calE$, the set~$\mathrm{Int}^j(\omega)$ is well-defined for any~$\omega\in\calE$. One may also check that $\mathrm{Int}^j(\omega)$ is in fact a domain. Define also~$\mathrm{Int}(\omega) := \mathrm{Int}^1(\omega) \cup \dots \cup \mathrm{Int}^6(\omega)$. We extend these definitions for configurations in $\calE(\Lambda'_k,A)$: we write $\calE^A$ for the set of $\omega\in\calE(\Lambda'_k,A)$ that contain only loops of diameter less than $\ell$ and paths which do not intersect $S$, and define $\mathrm{Int}(\omega)$ in an analogous way.

Consider $\Omega$ such that for some~$\omega\in\calE$ one has $\Omega = \mathrm{Int}(\omega)$, and denote $\Omega_j := \Omega \cap \bbT_k^j = \mathrm{Int}^j(\omega)$. Corollary~\ref{cor:cut} and \eqref{eq:abab} imply the existence of constants~$C,C'$ such that
\begin{equation}\label{eq:ok}
\forall j=1,\dots,6,\quad \sum_{\substack{\gamma \subset \bbT_{k,\ell}^j\\
\gamma:c_j\to d_j\\ \gamma\cap\bbT^j_{k,r}\ne \emptyset} } \wrel_{\Omega_j}(\gamma) \ge e^{-C'\ell}\sum_{\substack{\gamma' \in\Gamma'_{bcd}}} \wrel_{\bbT_k}(\gamma')  \ge e^{-C'\ell}\cdot \frac{x^2}{18k^3}\ge e^{-C\ell} \,.
\end{equation}
Denote by $\calF$ the set of configurations $\omega\in \calE^A$ which contain six paths, such that for all~$j=1,\dots,6$, one of these paths goes from $c_j$ to $d_j$ in $\bbT_{k,\ell}^j$ and intersects $\bbT_{k,r}^j$. Then, applying~\eqref{eq:ok} six times, we obtain
\begin{align*}
Z^A_{\Omega}\big[\calF \cap \calE(\Omega,A)\big]&\ge e^{-6C\ell}\  Z^\emptyset_\Omega\big[\calE \cap \calE(\Omega,\emptyset)\big].
\end{align*}
Now, we use that $\{\mathrm{Int} (\cdot )=\Omega\}$ is ``measurable from outside $\Omega$'', together with the domain Markov property of the loop model. More precisely, for any two configurations $\omega,\omega' \in \calE \cup \calE^A$ which coincide on $\Lambda'_k\setminus\Omega$, we have that $\mathrm{Int} (\omega )=\Omega$ if and only if $\mathrm{Int} (\omega')=\Omega$.
In addition, if $\omega \in \calE \cup \calE^A$ satisfies $\mathrm{Int} (\omega)=\Omega$, then it decomposes into two loop configurations $\omega \cap \Omega$ and $\omega \setminus \Omega$, the latter belonging to $\calE$. Using these observations, and denoting $\calE_\Omega := \{ \omega \setminus \Omega : \omega \in \calE,~\mathrm{Int} (\omega)=\Omega \}$, we obtain that
\begin{align*}
Z^A_{\Lambda'_k}[\{ \omega \in \calF : \mathrm{Int}(\omega)=\Omega \}]
 &= Z^\emptyset_{\Lambda'_k \setminus \Omega}[\calE_\Omega]\, Z^A_{\Omega}[\calF \cap \calE(\Omega,A)]\\
 &\ge e^{-6C\ell} Z^\emptyset_{\Lambda'_k \setminus \Omega}[\calE_\Omega]\, Z^\emptyset_{\Omega}[\calE \cap \calE(\Omega,\emptyset)]\\
 &=e^{-6C\ell}\  Z^\emptyset_{\Lambda'_k}[\{ \omega \in \calE : \mathrm{Int}(\omega)=\Omega \}].
\end{align*}
Summing over all $\Omega\in \{\mathrm{Int}(\omega) : \omega\in \calE\}$, we deduce that
\begin{align}
Z^A_{\Lambda'_k}[\calF]&\ge e^{-6C\ell}\ Z_{\Lambda'_k}^\emptyset[\calE].
\end{align}
We now wish to go back to configurations in $\calE(\Lambda'_k,\emptyset)$. Fix a collection~$\tau$ of six paths, each of length~$2\ell$, pairing the vertices of $A$ together in one of two following ways: if~$d$ is on the bottom side of $\bbT_{k,\ell}$, then we choose $\tau$ in such a way that the pairing is $(c_1,c_6)$, $(d_1,d_6)$, $(c_2,c_3)$, $(d_2,d_3)$, $(c_4,c_5)$, $(d_4,d_5)$; if~$d$ is on the left side of $\bbT_{k,\ell}$, then we consider a pairing $(d_1,d_2)$, $(c_2,c_3)$, $(d_3,d_4)$, $(c_4,c_5)$, $(d_5,d_6)$ and $(c_6,c_1)$. Let $\calG$ be the set of $\omega\in\calE(\Lambda'_k,\emptyset)$ containing a loop of diameter at least~$r-\ell$. Observe that $\omega\Delta\tau\in \calG$ as soon as $\omega\in\calF$. Moreover, $\omega \mapsto \omega\Delta \tau$ defines an injective map from~$\calF$ to~$\calG$ and the number of edges and loops in~$\omega\Delta\tau$ and~$\omega$ each differ by at most~$12\ell$, whence
\begin{align*}
Z^\emptyset_{\Lambda'_k}[\calG]&\ge (\tfrac xn)^{12\ell}\ Z^A_{\Lambda'_k}[\calF].
\end{align*}
Overall, using the two previous displayed inequalities and dividing by $Z^\emptyset_{\Lambda'_k}$ gives that
$$\bbP^\emptyset_{\Lambda'_k}[\calG]\ge  (\tfrac xn)^{12\ell}\,e^{-6C\ell}\ \bbP^\emptyset_{\Lambda'_k}[\calE].$$
Recall now the choice of~$r$ and $\ell$, and note that, if {\bf A1} of Theorem~\ref{thm:loop-dichotomy} is satisfied, then~$\bbP^\emptyset_{\Lambda'_k}[\calG]$ decays exponentially fast in $r$, and $\bbP^\emptyset_{\Lambda'_k}[\calE]$ tends to 1. This is contradictory for $k$ large.

\paragraph{Case 2.} Assume that (see Fig.~\ref{fig:case2})
$$\sum_{\substack{\gamma \subset \bbT_k\setminus \bbT_{k,r}\\ \gamma:a\to  \mathsf L_k} } \wrel_{\bbT_k}(\gamma) \ge \frac{x^2}{2}.$$
In this case, a path from $a$ to $\mathsf L_k$ staying in $\bbT_k\setminus\bbT_{k,r}$ must intersect the left or right boundary of the domain $\mathrm{Rect}_k$ enclosed in $[4r,k-4r]\times[0,4r]$. Thus, similarly to \eqref{eq:abab}, we get that there exist~$b\in L_k$ and~$d$ contained in the left or right boundary of~$\mathrm{Rect}_k$ such that
\begin{equation}
\sum_{\gamma' \in \Gamma'_{bd}} \wrel_{\bbT_k}(\gamma) \ge \frac{x^2}{4rk},
\end{equation}
where~$\Gamma'_{bd}$ is the set of paths~$\gamma'$ in $\bbT_k$ from~$a$ to~$b$ containing a subpath~$\gamma$ in $\mathrm{Rect}_k\setminus\bbT_{k,r}$ from~$a$ to~$d$. Here, we used that there are $k$ choices for $b$ and~$2r$ choices for~$d$. Below, we assume that~$d$ is contained in the left boundary of~$\mathrm{Rect}_k$, the case of the right boundary being completely analogous. In the same way as in the derivation of \eqref{eq:ok}, Corollary~\ref{cor:cut} implies that
\begin{equation}\label{eq:ababab}
\sum_{\substack{\gamma \subset \mathrm{Rect}_k\setminus\bbT_{k,r}\\
\gamma:a\to d} } \wrel_{\mathrm{Rect}_k}(\gamma) \ge  e^{-C' r} \sum_{\gamma' \in \Gamma'_{bd}} \wrel_{\bbT_k}(\gamma) \ge e^{-C r}.
\end{equation}
Consider $a_1$ and $d_1$, the reflections of $a$ and $d$ with respect to the horizontal line $\{(x,y)\in\bbR^2:y=2r\}$, and let $S$ be the set of edges of $\bbH$ belonging to the hexagons intersecting this line. Similarly to case 1, define $\calE$ to be set of $\omega\in\calE(\mathrm{Rect}_k,\emptyset)$ that contain only loops of diameter less than $\ell$, and for~$\omega\in\calE(\mathrm{Rect}_k,\emptyset)$, let~$\partial\omega$ be the union of all loops of $\omega$ intersecting~$S$. For $\omega \in \calE$, define~$\mathrm{Int}(\omega)\subset \mathrm{Rect}_k$ to be the union of the two connected components (each of which is a domain) in~$\mathrm{Rect}_k\setminus\partial\omega$ that contain the top and bottom sides of $\mathrm{Rect}_k$. Note that~$d$ is an endpoint of an edge in~$\mathrm{Int}(\omega)$, as the distance from~$d$ to~$S$ is at least~$r$.

\begin{figure}
\centering
	\begin{subfigure}[t]{.36\textwidth}
	\centering
		\includegraphics[scale=1.8]{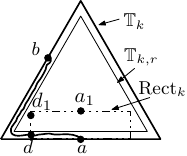}
		\caption{Paths from~$a$ to~$b$ contained in $\bbT_k\setminus \bbT_{k,r}$ have a large relative weight in~$\bbT_k$. The point~$d$ on the left (or right) side of rectangle~$\mathrm{Rect}_k$ is such that paths from~$a$ to~$d$ have a large relative weight in~$\mathrm{Rect}_k$.}
		\label{fig:case2-1}
	\end{subfigure}
	\begin{subfigure}{10pt}
		\quad
	\end{subfigure}
	\begin{subfigure}[t]{.6\textwidth}
	\centering
		\includegraphics[scale=0.7]{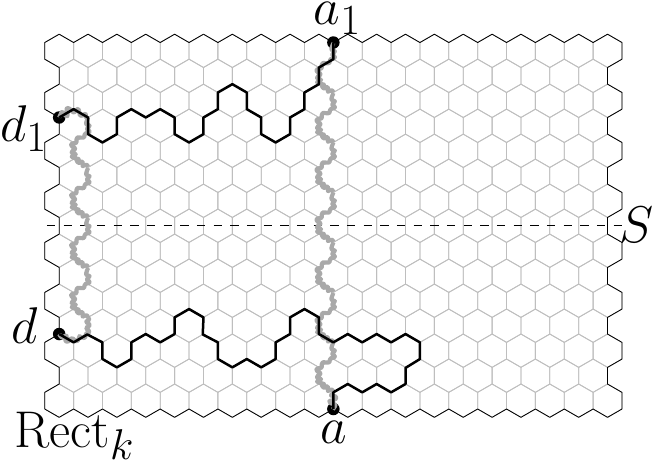}
		\caption{Here we zoom in on~$\mathrm{Rect}_k$. Points~$a_1,d_1$ are symmetric to~$a,d$ with respect to line~$S$. Points~$a,a_1$, as well as points~$d,d_1$, are linked by short straight paths (shown in gray) possibly intersecting paths~$a\to d$ and~$a_1\to d_1$. In any case, after removing the double edges these four paths create a big loop.}
		\label{fig:case2-2}
	\end{subfigure}
\caption{Case 2 of the proof.}
\label{fig:case2}
\end{figure}

By decomposing with respect to~$\mathrm{Int}(\omega)$ and using \eqref{eq:ababab} twice, we get
\[
Z^A_{\mathrm{Rect}_k}[\calF]\ge e^{-2Cr}Z_{\mathrm{Rect}_k}^\emptyset[\calE],
\]
where $A:=\{a,d,a_1,d_1\}$ and $\calF$ is the set of configurations $\omega\in\calE(\mathrm{Rect}_k,A)$ such that both paths do not intersect $S$ (hence, the one from~$a$ to~$d$ stays below~$S$, and the one from~$a_1$ to~$d_1$ stays above~$S$). Taking the symmetric difference with a configuration $\tau$ made of two paths, each of length at most $8r$, pairing $a$ to $a_1$, and $d$ to $d_1$, we obtain that
\[
Z^\emptyset_{\mathrm{Rect}_k}[\calG]\ge (\tfrac xn)^{16r}\ Z^A_{\mathrm{Rect}_k}[\calF],
\]
where $\calG$ is the set of configurations $\omega\in\calE(\mathrm{Rect}_k,\emptyset)$ containing a loop of diameter at least $k/2-20r$. Combining the two previous displayed inequalities gives
$$\bbP^\emptyset_{\mathrm{Rect}_k}[\calG]\ge (\tfrac xn)^{16r}e^{-2Cr}\ \bbP^\emptyset_{\mathrm{Rect}_k}[\calE].$$
We conclude as in case 1: if {\bf A1} of Theorem~\ref{thm:loop-dichotomy} is satisfied, $\bbP^\emptyset_{R_k}[\calG]$ decays exponentially fast in $k$, and $ \bbP^\emptyset_{\mathrm{Rect}_k}[\calE]$ tends to 1. This is contradictory for large $k$.
\bibliographystyle{abbrv}
\bibliography{biblicomplete2}

\end{document}